
\documentclass{article}
\usepackage[latin1]{inputenc}
\usepackage{amsmath}
\usepackage{amssymb}
\usepackage{amsthm}
\usepackage{mathrsfs}
\usepackage{lscape}
\usepackage{euscript}
\usepackage{latexsym,enumerate,color}
\usepackage{authblk}

\usepackage[normalem]{ulem}

\allowdisplaybreaks

\usepackage{euscript,url}



\setlength{\topmargin}{-.45in}
\setlength{\textheight}{9in}
\setlength{\textwidth}{6.2in}
\setlength{\oddsidemargin}{-0.0in}
\setlength{\evensidemargin}{-0.0in}

\linespread{1.1}

\newtheorem{theorem}{Theorem}[section]

\newtheorem{prop}[theorem]{Proposition}

\theoremstyle{definition}

\theoremstyle{remark}










\newcommand{\Z}{{\mathbb Z}}

\newcommand{\und}[1]{\underline{#1}}




\newcommand{\F}{{\mathbb F}}
\newcommand{\cR}{{\mathcal R}}

\newcommand{\cB}{{\mathcal B}}
\newcommand{\cC}{{\mathcal C}}

\newcommand{\half}{\frac{1}{2}}
\newcommand{\CC}{\cC}

\newcommand{\sym}{{\sf Sym }}

\newcommand{\cS}{{\mathcal S}}

\newcommand{\cA}{{\mathcal A}}

\newcommand{\mtrx}[4]{\left(\begin{array}{cc} #1 & #2 \\ #3 & #4\end{array}\right)}

\newcommand{\comment}[1]{}

\newcommand{\XX}{\mathfrak{X}}

\newcommand{\js}{{ Jordan scheme}}
\newcommand{\as}{{ association scheme}}
\newcommand{\bR}{{\mathbb R}}
\newcommand{\Sm}{\mathsf{Sym}}
\newcommand{\cJ}{{\mathcal J}}
\newcommand{\supp}{\mathsf{Supp}}

\newcommand{\fJ}{{\mathfrak K}}

\makeatletter
\let\@fnsymbol\@alph

\title{On Jordan schemes}
\author{Mikhail Muzychuk\thanks{Ben Gurion University, Beer Sheva, Israel, muzychuk@bgu.ac.il.
This author was supported by the Israeli Ministry of Absorption.}\ \ 
Sven Reichard\thanks{Dresden International University,  Dresden, Germany, sven.reichard@freenet.de}\ \
Mikhail Klin\thanks{Ben Gurion University, Beer Sheva, Israel, klin@math.bgu.ac.il}
}

\date{}

\begin{document}

\maketitle

\begin{abstract}
In 2003 Peter Cameron introduced the concept of a {\it Jordan scheme} and asked whether there exist  Jordan schemes which are not symmetrisations of coherent configurations ({\it proper} Jordan schemes). The question was answered affirmatively by the authors last year and some of the examples were presented in an essay uploaded to the arXiv. In this paper we describe several infinite series of proper Jordan schemes and present first developments in the theory of Jordan schemes - a new class of algebraic-combinatorial objects. 
\end{abstract}

\section{Introduction}\label{intro}
Motivated by the problems appearing in the theory of experimental designs, Bose and Mesner introduced in
1959 \cite{BM59} a special class of matrix algebras, known nowadays as
{\it Bose-Mesner} algebras of symmetric association schemes.
In the same year Shah published the paper \cite{S59}
where he proposed a more general idea: to replace the standard matrix
product with the Jordan product. So, in fact, he introduced the objects
called later {\it Jordan schemes} by Cameron \cite{C03}. While the ideas of Bose
and Mesner led to a new direction in algebraic graph theory called
later {\it algebraic combinatorics} by Bannai and Ito \cite{BI}, Shah's idea
was not developed at all. Only in 2004 Shah's approach was analyzed by
Bailey in her book \cite{B04} where some basic properties of Jordan
schemes were proved. She observed that the symmetrization of any
association scheme (homogeneous coherent configuration in terms of \cite{B04})
is a Jordan scheme, which led to the following question posed by
Cameron \cite{C03}: "Are there any others?". Here we give an
affirmative answer to this question by providing
several infinite series of {\it proper} Jordan schemes, i.e. those
which do not appear via symmetrization of association schemes.

Although this paper has 
an "umbilical cord" connection to \cite{KMR}, its style and notation are quite different. The paper is written as \textcolor{black}{ self-contained
as possible. The first two sections require only basic knowledge in algebra and combinatorics.  Along with this in the third section and later on it is assumed that the reader has
certain acquaintance with the classical concepts
of a coherent configuration and an association scheme. 
Recent book \cite{CP}, as well as a tutorial \cite{KRRT99}, 
are referred as reasonably helpful sources.} 

\textcolor{black}{The paper is organized as follows. In Sections~\ref{intro},~\ref{JC} we provide main definitions and develop very basic properties of Jordan configurations/schemes.}

\textcolor{black}{ In Section~\ref{SJC} we prove that a proper Jordan configuration has rank at least five and determine its algebraic structure 
when the bound is attained.}

In Section~\ref{WFDF} we provide a prolific construction of Jordan schemes of rank five based on the ideas of Fon-Der-Flaass~\cite{FDF}. It is shown that this construction contains an infinite series of examples that are proper on the parameter level, that is, no improper Jordan scheme could have the parameters of the constructed examples.

In Section~\ref{switching}  we give another infinite series of proper Jordan schemes obtained from improper ones by a certain switching operation. In these examples the constructed Jordan schemes may have an arbitrarily large rank.

The last section contains a proof of an auxiliary statement used in the paper.

\noindent\subsection{Notation and definitions}
Let $\Omega$ be a finite set \textcolor{black}{whose elements will be called points or vertices}, $\F$ an arbitrary field. As usual, $M_\Omega(\F)$ denotes the algebra of square matrices whose rows and columns are labeled by the elements of $\Omega$. For the sake of simplicity, we will assume that $\mathsf{char}(\F)\neq 2$, although part of the statements here are valid for even characteristic too.

For any matrix $A\in M_\Omega(\F)$ we denote by $\supp(A)$ the binary relation on $\Omega$ consisting of all pairs $(\alpha,\beta)\in\Omega^2$ satisfying $A_{\alpha,\beta}\neq 0$. 

\textcolor{black}{Given a binary relation $S\subseteq \Omega^2$, we define its graph as a pair $\Gamma:=(\Omega,S)$ where $\Omega$ is the vertex set and $S$ is the set of arcs.
The adjacency matrix\footnote{\textcolor{black}{Also referred to as the adjacency matrix of the relation $S$.}} of the graph $\Gamma$} will be denoted as $\und{S}$ . The transposed relation is denoted as $S^\top$.
For an arbitrary point $\alpha\in\Omega$ we set $S(\alpha):=\{\beta\in\Omega\,|\,(\alpha,\beta)\in S\}$. 
\textcolor{black}{Note that }
\begin{equation}\label{eq0909}
\textcolor{black}{(\und{S})_{\alpha,\beta}\neq 0\iff (\und{S})_{\alpha,\beta} = 1 \iff \beta\in S(\alpha) \iff  \alpha\in S^\top(\beta).}
\end{equation}

\textcolor{black}{ Note that in general $\Gamma$ is a directed graph. It  becomes undirected if and only if $S=S^\top$. A relation $S$ is called {\it regular} if its graph $\Gamma = (\Omega,S)$ is regular, i.e. the number $|S(\alpha)|$ does not depend on the choice of $\alpha\in\Omega$. In this case $|S(\alpha)|$ is called the {\it valency} of $S$ (or $\Gamma$) and is denoted as $k_S$. In what follows we set $1_\Omega:=\{(\omega,\omega)\,|\,\omega\in\Omega\}$. The adjacency matix of $1_\Omega$ is the identity matrix denoted by $I_\Omega$.}

If $\cS$ is a collection of binary relations, then 
$\und{\cS}:=\{\und{S}\,|\,S\in\cS\}$ and  $\F\langle\und{\cS}\rangle$ stands for the linear span of the set $\und{\cS}$. Given a partition $\cS$ of $\Omega^2$, we denote by $\cS(\alpha,\beta)$ the unique class of $\cS$ which contains 
the pair $(\alpha,\beta)\in\Omega^2$.

Given two matrices $A,B\in M_\Omega(\F)$, we denote their standard matrix product as $A\cdot B$ or just $AB$; Schur-Hadamard (component-wise) product is written as $A\circ B$ and $A^\top$ stands for the matrix transposed to $A$. Note that the identity matrix $I_\Omega$ is a $\cdot$-unit while the all-one matrix $J_\Omega$ is a $\circ$-unit. We also define the \textit{Jordan} product in $M_\Omega(\F)$ as $A\star B = \half (A\cdot B + B\cdot A)$. 

\textcolor{black}{In what follows we will often multiply adjacency matrices of various binary relations. In these cases the following simple statement is very useful. 
\begin{prop}\label{090920a}Let $\und{R},\und{S}\in M_\Omega(\F)$
be adjacency matrices of binary relations $R,S\subseteq \Omega^2$.
Then $(\und{R}\cdot\und{S})_{\alpha,\beta} = |R(\alpha)\cap S^\top(\beta)|$ if $\mathsf{char}(\F) = 0$ and $(\und{R}\cdot\und{S})_{\alpha,\beta}\equiv |R(\alpha)\cap S^\top(\beta)|\ ({\rm mod}\ \mathsf{char}(\F))$ if  $\mathsf{char}(\F) > 0$. 
\end{prop}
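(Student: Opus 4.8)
The plan is to expand the $(\alpha,\beta)$-entry of the product directly from the definition of matrix multiplication and count nonzero summands. Write
\[
(\und{R}\cdot\und{S})_{\alpha,\beta} = \sum_{\gamma\in\Omega}(\und{R})_{\alpha,\gamma}\,(\und{S})_{\gamma,\beta}.
\]
Since the entries of an adjacency matrix are by definition $0_\F$ or $1_\F$, each factor $(\und{R})_{\alpha,\gamma}$ and $(\und{S})_{\gamma,\beta}$ lies in $\{0_\F,1_\F\}$, and hence the $\gamma$-th summand equals $1_\F$ when both factors are nonzero and $0_\F$ otherwise.

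Next I would rewrite the condition ``both factors nonzero'' as a membership statement via \eqref{eq0909}: $(\und{R})_{\alpha,\gamma}\neq 0$ is equivalent to $\gamma\in R(\alpha)$, and $(\und{S})_{\gamma,\beta}\neq 0$ is equivalent to $\gamma\in S^\top(\beta)$. Thus the $\gamma$-th summand contributes $1_\F$ precisely when $\gamma\in R(\alpha)\cap S^\top(\beta)$, so the whole sum equals $|R(\alpha)\cap S^\top(\beta)|\cdot 1_\F$, the image in $\F$ of the nonnegative integer $|R(\alpha)\cap S^\top(\beta)|$.

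It remains to interpret ``the image in $\F$'' in the two cases. If $\mathsf{char}(\F)=0$, the prime subfield is (a copy of) $\Q$, the map $n\mapsto n\cdot 1_\F$ is injective on $\Z_{\geq 0}$, and the entry may be identified with the integer $|R(\alpha)\cap S^\top(\beta)|$ itself; if $\mathsf{char}(\F)=p>0$, then $n\cdot 1_\F$ depends only on $n \bmod p$, so the entry equals $|R(\alpha)\cap S^\top(\beta)|$ reduced modulo $p$. There is no genuine obstacle in this argument; the only point that warrants a line of care is keeping track of the distinction between equality of elements of $\F$ and equality of integers, which is exactly what the case split on $\mathsf{char}(\F)$ records.
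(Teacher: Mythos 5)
Your proof is correct and follows essentially the same route as the paper's: expand the $(\alpha,\beta)$-entry of the product, use \eqref{eq0909} to identify the nonzero summands with the points of $R(\alpha)\cap S^\top(\beta)$, and conclude that the entry is the image of that cardinality in $\F$. Your extra remark distinguishing the integer from its image in $\F$ (injective on $\Z_{\geq 0}$ in characteristic zero, reduction mod $p$ otherwise) only makes explicit what the paper leaves as "Now the claim follows."
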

\begin{proof} Since $\und{R}$ and $\und{S}$ are $(0,1)$-matrices, the $(\alpha,\beta)$-entry of the product $\und{R}\cdot \und{S}$ is equal to the number of points $\gamma\in\Omega$ satisfying $(\und{R})_{\alpha,\gamma} = 1 = (\und{S})_{\gamma,\beta}$. According to \eqref{eq0909}, these equalities are equivalent to 
$\gamma\in R(\alpha)\cap S^\top(\beta)$. Therefore 
$(\und{R}\cdot \und{S})_{\alpha,\beta} =\sum_{\gamma\in R(\alpha)\cap S^\top(\beta)} 1$. Now the claim follows. 
\end{proof}}

To avoid an excessive use of brackets we agree that $\cdot$ and $\star$ have a higher priority than $\circ$. For example, an expression $A\cdot B\circ C\star D$ should be read as $(A\cdot B)\circ (C\star D)$.

The $k$-th power of a matrix $A$ with respect to the products $\cdot,\circ$ and $\star$ will be written as $A^k, A^{\circ k}$ and $A^{\star k}$, respectively.

The algebra $(M_\Omega(\F),\star)$ is a particular case of algebraic systems known as Jordan algebras \cite{J}. Recall that a {\it Jordan algebra} over the field $\F$ is a vector space $\cA$ over $\F$, supplied with a bilinear multiplication $\star$ satisfying
the following axioms:
\begin{equation}\label{JA}
\begin{array}{rcl}
a\star b & = & b\star a;\\
(a\star b)\star (a\star a) & = & a\star (b \star (a\star a)).
\end{array}
\end{equation} 
Given an associative algebra $(\cA,\cdot)$, the derived product 
$a\star b :=\frac{1}{2}(a\cdot b + b\cdot a)$ satisfies the above axioms and produces a Jordan algebra. A Jordan algebra is called {\it special} \cite{M} if it is a $\star$-subalgebra of a Jordan algebra obtained in this way. If $\cA$ is an associative algebra, then the corresponding special Jordan algebra will be denoted as
$\cA^+$. 
The non-special Jordan algebras are called {\it exceptional}. Note that all Jordan algebras appearing in this paper are subalgebras of $M_\Omega(\F)^+$, and, therefore, are special.

To distinguish isomorphisms of associative and Jordan algebras we use the notation $\cong_J$ for an isomorphism between Jordan algebras.

\noindent\subsection{Subalgebras of the matrix algebra}
Recall that a vector subspace 
$\cA\subseteq M_\Omega(\F)$ is called a \textit{coherent algebra} \cite{H87} if it contains $I_\Omega,J_\Omega$ and is closed w.r.t. ${}^\top,\cdot,\circ$. Similarly, a \textit{coherent Jordan algebra} (coherent J-algebra, for short) is a subspace $\cA\subseteq M_\Omega(\F)$ which satisfies the same conditions where $\cdot$ is replaced by $\star$. Clearly, each coherent algebra is a coherent J-algebra. The converse is not true - the simplest example of a coherent J-algebra which is not a coherent algebra is provided by the subspace $\sym_\Omega(\F)$ of symmetric matrices, $|\Omega| > 1$. 

Given a coherent algebra $\cA$, its \textit{symmetrization} 
$\widetilde{\cA}:=\{A\in\cA\,|\, A^\top = A\}$ is a coherent J-algebra. It is a coherent algebra iff the matrices of $\widetilde{\cA}$ pairwise commute.

We start with the following statement.
\begin{prop}\label{020619a} Let $\cA\subseteq M_\Omega(\F)$ be a $\circ$-closed subspace (Schur-Hadamard subalgebra). Then there exists a unique basis $A_1,...,A_r$ of $\cA$ consisting of $\{0,1\}$-matrices such that $\supp(A_i)\cap\supp(A_j)=\emptyset$ whenever $i\neq j$. Moreover, 
\begin{enumerate}
\item[{\rm (a)}] if $I_\Omega\in\cA$, then $I_\Omega = \sum_{i\in F} A_i$  for some $F\subseteq\{1,...,r\}$;
\item[{\rm (b)}] if $J_\Omega\in\cA$, then  $\sum_{i=1}^r A_i = J_\Omega$;
\item[{\rm (c)}] if $\cA^\top = \cA$, then $\{A_1,...,A_r\}^\top = \{A_1,...,A_r\}$. 
\end{enumerate}
\end{prop}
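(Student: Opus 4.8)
The plan is to build the basis from the combinatorial structure underlying a $\circ$-closed subspace. First I would observe that $\circ$ is just entrywise multiplication, so for a fixed pair $(\alpha,\beta)\in\Omega^2$ the evaluation map $\mathrm{ev}_{\alpha,\beta}\colon\cA\to\F$, $A\mapsto A_{\alpha,\beta}$, is an algebra homomorphism from $(\cA,\circ)$ to $(\F,\cdot)$. Define an equivalence relation on $\Omega^2$ by $(\alpha,\beta)\sim(\gamma,\delta)$ iff $\mathrm{ev}_{\alpha,\beta}$ and $\mathrm{ev}_{\gamma,\delta}$ agree on all of $\cA$ (equivalently, on a fixed basis of $\cA$); let $R_1,\dots,R_r$ be the classes, and set $A_i:=\und{R_i}$. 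One must check $A_i\in\cA$: this is where I expect the main work, since a priori an intersection of kernels of evaluation functionals need only be a subspace. The cleanest route is linear-algebraic: the $R_i$ are exactly the cells of the common refinement of the supports of finitely many matrices, so it suffices to show each $A_i\in\cA$ when $\cA$ is spanned by finitely many matrices $B_1,\dots,B_m$; and here one can argue that $A_i$ is a polynomial (under $\circ$) in the $B_j$ together with the constant $J_\Omega$ — but $J_\Omega$ need not be in $\cA$. To avoid that snag, I would instead use a counting/Vandermonde argument: the distinct values that the tuple $(\mathrm{ev}_{\alpha,\beta}(B_1),\dots,\mathrm{ev}_{\alpha,\beta}(B_m))$ takes are finitely many vectors $v^{(1)},\dots,v^{(r)}\in\F^m$; for each $i$ one seeks coefficients $c_{j_1\cdots j_m}$ so that $\sum c_{j_1\cdots j_m} B_1^{\circ j_1}\circ\cdots\circ B_m^{\circ j_m}$ evaluates to the indicator of $\{(\alpha,\beta):v^{(i)}\text{ is attained}\}$; since the monomials $\prod_k x_k^{j_k}$ separate the finitely many points $v^{(1)},\dots,v^{(r)}$ of $\F^m$, interpolation over the (possibly non-algebraically-closed) field $\F$ still produces such a polynomial, giving $A_i\in\cA$.

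Granting $A_i\in\cA$, I would then show $\{A_1,\dots,A_r\}$ is a basis of $\cA$. Linear independence is immediate from the disjointness of supports (each $A_i$ has a nonzero entry where all others vanish). Spanning follows because for any $A\in\cA$ and any class $R_i$, the entry $A_{\alpha,\beta}$ is constant — call it $\lambda_i$ — over $(\alpha,\beta)\in R_i$, so $A=\sum_i\lambda_i A_i$; this constancy is exactly the defining property of the equivalence relation, noting that $\mathrm{ev}_{\alpha,\beta}(A)$ depends only on the class of $(\alpha,\beta)$. Uniqueness: if $\{A_1',\dots,A_s'\}$ is another $\{0,1\}$-basis with pairwise disjoint supports, then writing each $A_i'$ in the $A_j$-basis and using that a $\{0,1\}$-combination $\sum\mu_j A_j$ with disjoint-support pieces is $\{0,1\}$ only if each $\mu_j\in\{0,1\}$, one gets that each $A_i'$ is a $\{0,1\}$-sum of $A_j$'s; disjointness of the supports of the $A_i'$ and the fact that they together must span forces $s=r$ and a bijective matching $A_i'=A_{\pi(i)}$.

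Finally I would dispatch (a), (b), (c), which are short once the basis is in hand. For (c): if $\cA^\top=\cA$ then $A_i^\top\in\cA$ is a $\{0,1\}$-matrix, and $\{A_1^\top,\dots,A_r^\top\}$ is again a disjoint-support $\{0,1\}$-basis of $\cA$, so by uniqueness $\{A_i^\top\}=\{A_i\}$. For (a): if $I_\Omega\in\cA$ then $I_\Omega=\sum_i\lambda_i A_i$ with $\lambda_i\in\{0,1\}$ (since $I_\Omega$ is itself a $\{0,1\}$-matrix and the $A_i$ have disjoint supports, each $\lambda_i$ must be $0$ or $1$), so $I_\Omega=\sum_{i\in F}A_i$ with $F=\{i:\lambda_i=1\}$. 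For (b): if $J_\Omega\in\cA$ then $J_\Omega=\sum_i\lambda_i A_i$, and comparing entries — every entry of $J_\Omega$ is $1$, while the supports of the $A_i$ partition those pairs where some $A_i$ is nonzero — forces every pair $(\alpha,\beta)$ to lie in exactly one $\supp(A_i)$ with $\lambda_i=1$; since this must hold for all pairs, $\bigcup_i\supp(A_i)=\Omega^2$ and every $\lambda_i=1$, i.e. $\sum_{i=1}^r A_i=J_\Omega$. The main obstacle throughout is the first step — producing the $A_i$ inside $\cA$ over an arbitrary field — and the interpolation argument above is the device I would lean on; everything after that is bookkeeping with disjoint supports.
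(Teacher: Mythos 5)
Your route --- partition $\Omega^2$ into the level classes of the evaluation functionals and realize the class indicators inside $\cA$ by $\circ$-polynomial interpolation in a spanning set $B_1,\dots,B_m$ --- is genuinely different from the paper's proof, which simply views $(\cA,\circ)$ as a subalgebra of the commutative semisimple algebra $\F^{|\Omega^2|}$, concludes $\cA\cong\F^r$ with a basis of pairwise orthogonal $\circ$-idempotents, and reads off the $\{0,1\}$ property from $A_i\circ A_i=A_i$. However, as written your construction has a genuine gap at what one might call the \emph{zero class}: the set $R_0$ of positions $(\alpha,\beta)$ at which every matrix of $\cA$ vanishes. If $R_0\neq\emptyset$, two things go wrong. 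First, the indicator of $R_0$ need not lie in $\cA$, so the full family of class indicators cannot be a basis of $\cA$; concretely, for $\cA=\F\langle I_\Omega\rangle$ with $|\Omega|\ge 2$ the classes are $1_\Omega$ and its complement, there are two of them while $\dim(\cA)=1$, and $\und{\Omega^2\setminus 1_\Omega}=J_\Omega-I_\Omega\notin\cA$. Second, your interpolation step fails exactly for this class: since the constant monomial is $J_\Omega$, which you correctly excluded, every admissible monomial $B_1^{\circ j_1}\circ\cdots\circ B_m^{\circ j_m}$ vanishes on $R_0$, hence so does every linear combination of them, and the indicator of $R_0$ (which must equal $1$ there) can never be produced. ``The monomials separate the points'' is not a sufficient reason at the point $v=0$: separation does not give interpolation within the non-unital algebra of constant-term-free polynomials.

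The repair is easy and keeps the rest of your argument intact: discard $R_0$ and keep only the classes whose value vector $v^{(i)}$ is nonzero. For such a class, interpolate with the zero vector added to the node set and assigned the value $0$: a polynomial equal to $1$ at $v^{(i)}$ and $0$ at the other $v^{(i')}$ and at $0$ automatically has zero constant term, so the corresponding $\circ$-polynomial in $B_1,\dots,B_m$ lies in $\cA$ and equals the class indicator. Every $A\in\cA$ vanishes on $R_0$ and is constant on each remaining class, so these indicators span $\cA$; independence follows from disjointness of supports, and your uniqueness argument and parts (a)--(c) then go through essentially as you (and the paper) state them. What the corrected version buys over the paper's proof is that it is elementary and explicit, avoiding the structure theory of commutative semisimple algebras, at the cost of the interpolation bookkeeping.
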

\begin{proof} The algebra $(M_\Omega(\F),\circ)$ is a commutative associative algebra isomorphic to $\F^{|\Omega^2|}$. Its subalgebra $\cA$ is isomorphic to $\F^r,r=\dim(\cA)$ and has a basis consisting of pairwise orthogonal $\circ$-idempotents, say $A_1,...,A_r$. It follows from $A_i\circ A_i = A_i$ that each $A_i$ is a $\{0,1\}$-matrix. If $i\neq j$, then $A_i\circ A_j =0$ implying $\supp(A_i)\cap \supp(A_j)=\emptyset$. 

\medskip

\noindent (a) It follows from $I_\Omega\in\cA$ that $I_\Omega = \sum_i c_i A_i$ for some $c_i\in\F$. 
\textcolor{black}{Taking $\circ$-square of }both sides we obtain $I_\Omega = \sum_i c_i^2 A_i$. Therefore $c_i\in\{0,1\}$, as required. 

\medskip

\noindent (b) As in the previous part one can show that 
$J_\Omega$ is a $\{0,1\}$-linear combination of the standard basis matrices:
$J_\Omega = \sum_i c_i A_i, c_i\in\{0,1\}.$ Pick an arbitrary $i$ and $(\alpha,\beta)\in\supp(A_i)$. Then $1 = (J_\Omega)_{\alpha,\beta} = c_i$. Hence all coefficients  in the above decomposition are ones.

\medskip

Part (c) follows directly from the fact that ${}^\top$ is an automorphism  of $(\cA,\circ)$. 
\end{proof}
In what follows the above basis will be called the \textit{standard basis} of $\cA$.
 
Since every $\{0,1\}$-matrix $A\in M_\Omega(\F)$ is the adjacency matrix of $\supp(A)$, one has the following consequence of Proposition~\ref{020619a}.
\begin{prop}\label{020619z} Let $\cA\subseteq M_\Omega(\F)$ be 
a subspace which contains $I_\Omega,J_\Omega$ and is closed w.r.t. ${}^\top$ and $\circ$. Then there exists a unique partition $\cC=\{C_1,...,C_r\}$ of $\Omega^2$ such that $\cA = \F\langle \und{C}_1,...,\und{C}_r\rangle$. The partition satisfies the following conditions 
\begin{enumerate}
\item[{\rm (a)}] $1_\Omega = \{(\omega,\omega)\,|\,\omega\in\Omega\}$ is a union of some $C_i\in\cC$;
\item[{\rm (b)}] $\cC^\top = \cC$\textcolor{black}{, where $\CC^\top:=\{C_1^\top,...,C_r^\top\}$}. 
\end{enumerate}
\end{prop}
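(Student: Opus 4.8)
The plan is to deduce this directly from Proposition~\ref{020619a}, translating the language of $\{0,1\}$-matrices into the language of binary relations. First I would invoke Proposition~\ref{020619a} to obtain the standard basis $A_1,\dots,A_r$ of $\cA$: a basis of $\{0,1\}$-matrices with pairwise disjoint supports. Since $J_\Omega\in\cA$, part (b) of that proposition gives $\sum_{i=1}^r A_i = J_\Omega$, which means that the supports $C_i := \supp(A_i)$ are not merely pairwise disjoint but actually cover all of $\Omega^2$; hence $\cC := \{C_1,\dots,C_r\}$ is a partition of $\Omega^2$. Because each $A_i$ is a $\{0,1\}$-matrix, it is the adjacency matrix of its support, i.e. $A_i = \und{C_i}$, so $\cA = \F\langle\und{C}_1,\dots,\und{C}_r\rangle$ as claimed.

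Next I would check the two listed properties. For (a), apply part (a) of Proposition~\ref{020619a}: since $I_\Omega\in\cA$, we have $I_\Omega = \sum_{i\in F}A_i$ for some $F\subseteq\{1,\dots,r\}$; comparing supports and using that the $C_i$ are disjoint, this says exactly that $1_\Omega = \bigcup_{i\in F} C_i$, a union of classes of $\cC$. For (b), apply part (c): since $\cA^\top = \cA$, transposition permutes the standard basis, so $\{A_1^\top,\dots,A_r^\top\} = \{A_1,\dots,A_r\}$; since $\und{C_i}^\top = \und{C_i^\top}$, taking supports gives $\{C_1^\top,\dots,C_r^\top\} = \{C_1,\dots,C_r\}$, that is, $\cC^\top = \cC$.

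Finally, uniqueness: if $\cC' = \{C_1',\dots,C_s'\}$ is another partition of $\Omega^2$ with $\cA = \F\langle\und{C}_1',\dots,\und{C}_s'\rangle$, then $\{\und{C}_1',\dots,\und{C}_s'\}$ is a basis of $\cA$ consisting of $\{0,1\}$-matrices with pairwise disjoint supports (disjointness because $\cC'$ is a partition), so by the uniqueness clause of Proposition~\ref{020619a} it coincides with $\{A_1,\dots,A_r\}$; passing to supports yields $\cC' = \cC$.

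There is essentially no obstacle here: the entire content has already been extracted in Proposition~\ref{020619a}, and this statement is just its reformulation for subspaces containing $I_\Omega$ and $J_\Omega$. The only point requiring a moment's care is noticing that the hypothesis $J_\Omega\in\cA$ is precisely what upgrades "pairwise disjoint supports" to "partition of $\Omega^2$"; without it one would only get a partition of the support of $J_\Omega$ restricted to $\cA$, i.e. of $\bigcup_i C_i$.
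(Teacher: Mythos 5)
Your proposal is correct and follows exactly the route the paper intends: the paper derives Proposition~\ref{020619z} as an immediate consequence of Proposition~\ref{020619a}, using that every $\{0,1\}$-matrix is the adjacency matrix of its support, and your argument simply spells out those details (partition from $J_\Omega\in\cA$ via part (b), conditions (a) and (b) from parts (a) and (c), uniqueness from uniqueness of the standard basis). No gaps; your closing observation about the role of $J_\Omega$ is accurate.
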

Following \cite{H90}, we call any partition $\CC$ of $\Omega^2$ satisfying the conditions (a)-(b) a \textit{rainbow}.
\textcolor{black}{ The numbers $|\Omega|$ and $|\CC|$ are called the {\it order} and the {\it rank} of the rainbow.
A rainbow $(\Omega,\CC)$ has rank one if and only if $|\Omega|=1$. If the point set  $\Omega$ has at least two elements, then $|\CC|\geq 2$ where the equality holds if and only if $\CC=\{1_\Omega,\Omega^2\setminus 1_\Omega\}$. These rainbows
are called {\it trivial}.
}

We call the elements of $\CC$ \textit{basic relations} or \textit{color classes} of $\cC$ (and $\cA$). The corresponding graphs $(\Omega,C), C\in\cC$ are called the \textit{basic graphs} of the rainbow $\XX=(\Omega,\cC)$. Any union of basic relations of $\XX$ is called an $\XX$(or $\CC$)-relation.
The set of all $\CC$-relations is denoted as $\CC^\cup$. \textcolor{black}{The difference between basic and non-basic relations of $\CC$ is vividly seen in the following statement: given a basic relation $C\in\CC$ and an arbitrary $D\in\CC^\cup$, either $C\cap D=\emptyset$ or $C\subseteq D$. In the paper we  will often use this simple observation without referring.}

A convenient way to present a rainbow (or an arbitrary partition $\CC$ of $\Omega^2$) is via its {\it adjacency matrix} 
\textcolor{black}{${\mathfrak A}(\CC)$ obtained by assigning  ${\mathfrak A}(\CC)_{\alpha,\beta}:=\CC(\alpha,\beta)$.
 }

 Note that every basic relation  $C\in\cC$ is either {\it symmetric} ($C^\top = C$) or {\it anti-symmetric} ($C^\top\cap C=\emptyset$). A rainbow is called \textit{symmetric} if all its basic relations are symmetric. A rainbow 
will be called \textcolor{black}{\textit{regular}} if all its basic relations are regular. \textcolor{black}{ It is called {\it homogeneous} if $1_\Omega$ is a basic relation of a rainbow. It is easy to see that a regular rainbow is always homogeneous while the opposite is not true.}

Two rainbows $(\Omega,\CC)$ and $(\Omega',\CC')$ are called \textit{(combinatorially) isomorphic} if there exists a bijection $f:\Omega\rightarrow\Omega'$ which maps the coloring $\CC$ onto $\CC'$ bijectively. 

A rainbow $(\Omega,\CC)$ is called a \textit{coherent configuration} (CC, for short) \cite{H75} if 
it satisfies the following regularity condition:
\begin{equation}\label{eq1}
\forall C,D\in\cC\ 
\forall\alpha,\alpha',\beta,\beta'\in\Omega:\ \cC(\alpha,\beta)=\cC(\alpha',\beta')\implies  |C(\alpha)\cap D^\top(\beta)| =  |C(\alpha')\cap D^\top(\beta')|
\end{equation}
In other words, the cardinality of $C(\alpha)\cap D^\top(\beta)$ depends only on the color class of the pair $(\alpha,\beta)$. The numbers $p_{C,D}^F:=|C(\alpha)\cap D^\top(\beta)|$, where $F:=\CC(\alpha,\beta)$, are called the \textit{intersection numbers} of the CC $(\Omega,\cC)$. 

The statement below describes a well-known relationship between 
coherent configurations and coherent algebras \cite{H87}.
\begin{theorem}\label{020619b}
Let $\XX=(\Omega,\cC)$ be a rainbow and $\F$ a field of characteristic zero. The vector subspace $\F\langle\und{\cC}\rangle\subseteq M_\Omega(\F)$ is a coherent algebra if and only if $\XX$ is a coherent configuration.
Every coherent algebra $\cA\subseteq M_\Omega(\F)$ coincides with $\F\langle\und{\CC}\rangle$ for a uniquely determined CC $(\Omega,\CC)$.
\end{theorem}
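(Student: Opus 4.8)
The plan is to prove both directions of Theorem~\ref{020619b} by translating the combinatorial regularity condition \eqref{eq1} into the algebraic closure properties, using Proposition~\ref{090920a} as the bridge between products of adjacency matrices and cardinalities of intersections.

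\textbf{Step 1: From coherent configuration to coherent algebra.} Suppose $\XX=(\Omega,\cC)$ is a coherent configuration. Set $\cA:=\F\langle\und{\cC}\rangle$. By construction $\cA$ is a $\circ$-closed subspace; since $\cC$ is a rainbow, condition (a) of Proposition~\ref{020619z} gives $I_\Omega = \sum_{i\in F}\und{C}_i$ for the classes $C_i\subseteq 1_\Omega$, so $I_\Omega\in\cA$, and $J_\Omega=\sum_i\und{C}_i\in\cA$ since $\cC$ partitions $\Omega^2$. Condition (b) of the rainbow definition gives $\cA^\top=\cA$. It remains to check $\cdot$-closure. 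For basic relations $C,D\in\cC$, Proposition~\ref{090920a} (in characteristic zero) gives $(\und{C}\cdot\und{D})_{\alpha,\beta}=|C(\alpha)\cap D^\top(\beta)|$, and by \eqref{eq1} this depends only on $F:=\cC(\alpha,\beta)$, equalling the intersection number $p^F_{C,D}$. Hence $\und{C}\cdot\und{D}=\sum_{F\in\cC}p^F_{C,D}\,\und{F}\in\cA$, and bilinearity extends $\cdot$-closure to all of $\cA$.

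\textbf{Step 2: From coherent algebra to coherent configuration.} Conversely, suppose $\cA\subseteq M_\Omega(\F)$ is a coherent algebra. Then $\cA$ contains $I_\Omega,J_\Omega$ and is closed under ${}^\top$ and $\circ$, so Proposition~\ref{020619z} yields a unique partition $\cC=\{C_1,\dots,C_r\}$ of $\Omega^2$ with $\cA=\F\langle\und{C}_1,\dots,\und{C}_r\rangle$, and this $\cC$ is a rainbow. To see $\XX=(\Omega,\cC)$ satisfies \eqref{eq1}, fix basic relations $C,D\in\cC$. Since $\cA$ is $\cdot$-closed, $\und{C}\cdot\und{D}=\sum_{i}c_i\,\und{C}_i$ for scalars $c_i\in\F$; but the $\und{C}_i$ have pairwise disjoint supports partitioning $\Omega^2$, so for any $(\alpha,\beta)$ we get $(\und{C}\cdot\und{D})_{\alpha,\beta}=c_{i}$ where $C_{i}=\cC(\alpha,\beta)$. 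By Proposition~\ref{090920a} the left-hand side is $|C(\alpha)\cap D^\top(\beta)|$, so this cardinality depends only on $\cC(\alpha,\beta)$, which is exactly \eqref{eq1}. The uniqueness of $\cC$ is the content of Proposition~\ref{020619z}, completing the correspondence.

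\textbf{Main obstacle.} The only genuinely delicate point is the bookkeeping in Step~2: one must argue that writing $\und{C}\cdot\und{D}$ in the standard basis forces the $(\alpha,\beta)$-entry to be constant on each color class \emph{before} invoking the combinatorial interpretation, rather than the other way around. This is immediate from the disjoint-support property in Proposition~\ref{020619a}, but it is the step where the equivalence actually gets its force, and it is worth stating explicitly. Everything else is a direct application of the two preparatory propositions and the rainbow axioms; the characteristic-zero hypothesis is used precisely so that Proposition~\ref{090920a} reads as an equality of integers, not merely a congruence.
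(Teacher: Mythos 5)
Your proof is correct. Note that the paper itself gives no proof of Theorem~\ref{020619b}, citing it as the well-known result of Higman \cite{H87}; your argument is exactly the one the paper does spell out for the Jordan analogue, Theorem~\ref{040619a}, namely extracting the standard basis via Proposition~\ref{020619z} and translating between matrix products and intersection cardinalities via Proposition~\ref{090920a} (with characteristic zero used, as you say, so that the entries are the actual cardinalities in the converse direction). So your write-up fills the omitted proof in the same spirit as the paper's treatment of the Jordan case, and no gaps remain.
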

Thus in the case of $\mathsf{char}(\F)=0$ there is a one-to-one correspondence between the coherent subalgebras of $M_\Omega(\F)$ and CCs over $\Omega$. Given a CC $\XX=(\Omega,\cC)$, the linear span $\F\langle \und{\cC}\rangle$ is a coherent algebra which is often called the \textit{adjacency algebra} of $\XX$. Its 
standard basis coincides with $\{\und{C}\}_{C\in\cC}$. The intersection numbers appear as structure constants of the adjacency algebra w.r.t. the standard basis: $\und{C}\cdot\und{D} = \sum_{F\in\cC} p_{C D}^F \und{F}$. 

\textcolor{black}{A homogeneous coherent configuration is called an {\it association scheme} (AS, for short). An association scheme is called {\it commutative} if its adjacency algebra is commutative. An AS is called {\it symmetric} if all its relations are symmetric. Note that a symmetric AS is always commutative.}

Regarding coherent J-algebras we have a complete analogue of  Theorem~\ref{020619b}. 
\begin{theorem}\label{040619a} Let $\cA\subseteq M_\Omega(\F), \mathsf{char}(\F)=0$ be a coherent J-algebra. Then there exists a rainbow $\cC$ of $\Omega^2$ such that $\{\und{C}\,|\,C\in\cC\}$ is the uniquely determined standard basis of $\cA$. \textcolor{black}{The rainbow $(\Omega,\CC)$ satisfies the following condition:
\begin{equation}\label{eq2}
\forall C,D\in\cC\ 
\forall\alpha,\alpha',\beta,\beta'\in\Omega:\ \cC(\alpha,\beta)=\cC(\alpha',\beta')\implies 
$$
$$
  |C(\alpha)\cap D^\top(\beta)| + |D(\alpha)\cap C^\top(\beta)|=  |C(\alpha')\cap D^\top(\beta')|+|D(\alpha')\cap C^\top(\beta')|.
\end{equation}
}

\textcolor{black}{Vice versa, if a rainbow $(\Omega,\cC)$ satisfies \eqref{eq2}, then the linear span $\F\langle\und{\CC}\rangle$ is a coherent J-algebra provided that $\mathsf{char}(\F)\neq 2$.}
\end{theorem}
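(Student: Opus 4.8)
The plan is to follow the template of the coherent algebra / coherent configuration correspondence (Theorem~\ref{020619b}), replacing the associative product by the Jordan product and keeping track of the symmetrization that this forces on the structure constants. Given a coherent J-algebra $\cA$, observe first that $\cA$ is a $\circ$-closed subspace containing $I_\Omega$ and $J_\Omega$ and closed under $^\top$, so Proposition~\ref{020619z} produces a \emph{unique} partition $\cC$ of $\Omega^2$ with $\cA=\F\langle\und{\cC}\rangle$; its defining conditions (a)--(b) are precisely the rainbow axioms, and by Proposition~\ref{020619a} the family $\{\und{C}\mid C\in\cC\}$ is the uniquely determined standard basis of $\cA$. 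It remains to check \eqref{eq2}. Fix basic relations $C,D\in\cC$. Since $\cA$ is $\star$-closed, $\und{C}\star\und{D}\in\cA=\F\langle\und{\cC}\rangle$, hence $\und{C}\star\und{D}=\sum_{F\in\cC}q^F_{C,D}\,\und{F}$ for suitable $q^F_{C,D}\in\F$; because the matrices $\und{F}$ have pairwise disjoint supports covering $\Omega^2$, the $(\alpha,\beta)$-entry of $\und{C}\star\und{D}$ equals $q^{F_0}_{C,D}$ with $F_0=\cC(\alpha,\beta)$, and so it depends only on $\cC(\alpha,\beta)$. On the other hand, expanding the Jordan product and applying Proposition~\ref{090920a} (in the form without residues, since here $\mathsf{char}(\F)=0$) gives
\[ (\und{C}\star\und{D})_{\alpha,\beta}=\half\bigl(|C(\alpha)\cap D^\top(\beta)|+|D(\alpha)\cap C^\top(\beta)|\bigr). \]
Comparing the two expressions, the right-hand side is constant on each color class of $\cC$, which after clearing the factor $\half$ is exactly the condition \eqref{eq2}.

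For the converse, let $(\Omega,\cC)$ be a rainbow satisfying \eqref{eq2} and set $\cA:=\F\langle\und{\cC}\rangle$ with $\mathsf{char}(\F)\neq 2$. That $I_\Omega,J_\Omega\in\cA$ and that $\cA$ is closed under $^\top$ and $\circ$ is immediate from the rainbow axioms together with disjointness of the supports of the $\und{C}$: indeed $1_\Omega$ is a union of basic relations, $J_\Omega=\sum_{C\in\cC}\und{C}$, $\cC^\top=\cC$, and $\und{C}\circ\und{D}=\delta_{C,D}\und{C}$. The only point requiring \eqref{eq2} is $\star$-closedness. For basic $C,D\in\cC$ consider the matrix $\und{C}\cdot\und{D}+\und{D}\cdot\und{C}$: by Proposition~\ref{090920a} its $(\alpha,\beta)$-entry is the image in $\F$ of the integer $|C(\alpha)\cap D^\top(\beta)|+|D(\alpha)\cap C^\top(\beta)|$, which by \eqref{eq2} depends only on $\cC(\alpha,\beta)$; hence $\und{C}\cdot\und{D}+\und{D}\cdot\und{C}$ is constant on the classes of $\cC$, i.e.\ lies in $\cA$, and dividing by $2$ (legitimate since $\mathsf{char}(\F)\neq 2$) shows $\und{C}\star\und{D}\in\cA$. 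Bilinearity of $\star$ then gives $\cA\star\cA\subseteq\cA$, so $\cA$ is a coherent J-algebra.

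The argument is essentially routine once Propositions~\ref{090920a}, \ref{020619a} and \ref{020619z} are in place, so I do not expect a serious obstacle. The only place that demands care is the converse over a field of positive characteristic, where one cannot assert equality of the two intersection-number sums directly in $\F$, but must use that \eqref{eq2} is an identity between nonnegative integers and only then reduce it modulo $\mathsf{char}(\F)$ via Proposition~\ref{090920a}. This is also precisely where the hypothesis $\mathsf{char}(\F)\neq 2$ enters: the factor $\half$ occurring in the Jordan product must be invertible in $\F$.
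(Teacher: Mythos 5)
Your argument is correct and follows essentially the same route as the paper's proof: the rainbow and its standard basis come from Proposition~\ref{020619z}, condition \eqref{eq2} is obtained by comparing the entries of $\und{C}\star\und{D}$ (constant on color classes, since it is a linear combination of the $\und{F}$) with the counting formula of Proposition~\ref{090920a}, and the converse reverses this, with $\mathsf{char}(\F)\neq 2$ used only to divide by $2$. Your explicit remark on how to handle positive characteristic in the converse (reduce the integer identity \eqref{eq2} modulo $\mathsf{char}(\F)$ via Proposition~\ref{090920a}) is exactly the point the paper treats implicitly, so there is nothing to add.
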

\begin{proof}
\textcolor{black}{ 
If $\cA\subseteq M_\Omega(\F)$ is a coherent J-algebra, then by Proposition~\ref{020619z} there exists a rainbow $\CC=\{C_1,...,C_r\}$ such that $\cA=\F\langle\und{\CC}\rangle$. To prove \eqref{eq2} we pick arbitrary $C,D\in\CC$ and two pairs $(\alpha,\beta),(\alpha',\beta')\in\Omega^2$ sharing the same color of $\CC$, say $F$, that is $\CC(\alpha,\beta)=\CC(\alpha',\beta') = F$.
}

\textcolor{black}{
Since $\F\langle \und{\CC}\rangle$ is $\star$-closed, $\und{C}\star\und{D} = \sum_{R\in\CC} \lambda_R \und{R}$ for some scalars
$\lambda_R\in\F$. It follows from $(\alpha,\beta)\in F \ni (\alpha',\beta')$ that $(\und{C}\star\und{D})_{\alpha,\beta} =\lambda_F =(\und{C}\star \und{D})_{\alpha',\beta'}$. Therefore
\begin{equation}\label{eq0909a}
(\und{C}\cdot\und{D})_{\alpha,\beta} + (\und{D}\cdot\und{C})_{\alpha,\beta}=(\und{C}\cdot \und{D})_{\alpha',\beta'} 
+ (\und{D}\cdot \und{C})_{\alpha',\beta'}.\end{equation}}

\textcolor{black}{
Now the property~\eqref{eq2} follows from Proposition~\ref{090920a}.
}

\textcolor{black}{
Assume now that $(\Omega,\CC)$ is a rainbow satisfying~\eqref{eq2}.
It follows from the definition of a rainbow that the vector space $\F\langle\und{\CC}\rangle$ contains $I_\Omega, J_\Omega$ and is closed with respect to $\circ$ and ${}^\top$. It remains to show that $\F\langle\und{\CC}\rangle$ is $\star$-closed. Pick two arbitrary elements of the standard basis $\und{C},\und{D}$, where $C,D\in\CC$. We have to show that 
$\und{C}\star\und{D} = \frac{1}{2}(\und{C}\cdot\und{D}+\und{D}\cdot\und{C})$ is a linear combination of $\und{F},F\in\CC$.   It follows from~\eqref{eq2} and Proposition~\ref{090920a} that~\eqref{eq0909a}
holds whenever the pairs $(\alpha,\beta)$ and $(\alpha',\beta')$ belong to the same color class of $\CC$. Thus
$\und{C}\star\und{D}$ is a linear combination of the adjacency matrices $\und{F},F\in\CC$, and, therefore, belongs to $\F\langle\und{\CC}\rangle$.
}
\end{proof}

A rainbow satisfying the above condition~\eqref{eq2} will be called a \textit{ coherent Jordan configuration} or just a \textit{Jordan configuration} (CJC or JC, for short). For homogeneous Jordan configurations we will use the name \textit{Jordan schemes} proposed by Cameron. 

The numbers $p_{C,D}^F:=\frac{1}{2}(|C(\alpha)\cap D^\top(\beta)|+|D(\alpha)\cap C^\top(\beta)|),F:=\CC(\alpha,\beta)$ are called the \textit{intersection numbers} of the Jordan configuration $(\Omega,\cC)$. Note that the intersection numbers of a JC are non-negative rational numbers. Although they might be non integral, they are always half-integral, i.e. belong to $\frac{1}{2}\Z$.

Thus in the case of $\mathsf{char}(\F)=0$ there is one-to-one correspondence between the coherent Jordan subalgebras of $M_\Omega(\F)^+$ and JCs \textcolor{black}{with point set} $\Omega$. Given a JC $\XX=(\Omega,\cC)$, the linear span $\F\langle\und{\cC}\rangle$ is a coherent J-algebra which we call the \textit{adjacency algebra} of $\XX$. Its 
standard basis coincides with $\{\und{C}\}_{C\in\cC}$. The intersection numbers \textcolor{black}{defined by~\eqref{eq2}} coincide with the structure constants of the adjacency algebra $\F\langle\und{\cC}\rangle$ w.r.t. the standard basis: $\und{C}\star\und{D} = \sum_{F\in\cC} p_{C D}^F \und{F}$. 

Two Jordan configurations $(\Omega,\CC)$ and $(\Omega',\CC')$ are
\textit{combinatorially} isomorphic if they are isomorphic as rainbows. We also say that they are \textit{algebraically} isomorphic if there exists a bijection 
$\varphi:\CC\rightarrow\CC'$ which preserves the structure constants,
that is the equality $p_{C,D}^F = p_{C^\varphi,D^\varphi}^{F^\varphi}$ holds for any triple of basic relations $C,D,F\in\CC$.

\subsection{Coherent and Jordan closures}
It follows from the definition of coherent algebra that the intersection of any number of coherent algebras is a coherent algebra too. This allows us to define a
\textit{coherent closure} of any matrix set $X\subseteq M_\Omega(\F)$ as the intersection of all coherent algebras containing $X$. The first efficient algorithm computing the coherent closure was proposed by  Weisfeiler and Leman in \cite{WL}. This algorithm used nowadays in different modifications is referred to as \textit{WL-algorithm} or \textit{WL-stabilization procedure}. The latest results regarding the complexity issues of the WL-algorithm are presented in \cite{LPS}. In what follows we denote the coherent closure of a set $X\subseteq M_\Omega(\F)$ as $WL(X)$. Note that the ideas presented in \cite{WL} were developed later in the book \cite{W76}.

In a similar way one can define the \textit{Jordan closure} $J(X)$ of any set $X\subseteq M_\Omega(\F)$ as the intersection of all coherent Jordan algebras containing $X$. To compute $J(X)$ one can modify the WL-stabilization procedure by replacing the  standard  matrix multiplication with the Jordan one (this process was called by Cameron the \textit{Jordan stabilization} ). It follows from the definitions that the following inclusion always holds: $J(X)\subseteq WL(X)$. The inclusion could be proper. For example, $\sym_\Omega(\F)$ is a coherent J-algebra but its coherent closure coincides with $M_\Omega(\F)$.

If each matrix from the set $X\subseteq M_\Omega(\F)$ is symmetric, then $J(X)$ contains symmetric matrices only, and, therefore,  $J(X)\subseteq \widetilde{WL(X)}$.  
The statement below tells us when the equality holds. To formulate it 
we recall that 
a coherent J-algebra $\cJ$ (and the corresponding Jordan configuration) is called \textit{proper} if 
 it is not the symmetrization of a coherent algebra (otherwise, we call $\cJ$ \textit{improper} or \textit{non-proper}). 
\begin{prop}\label{081219a} Let $X\subseteq M_\Omega(\F)$ be a set of symmetric matrices. Then $J(X) = \widetilde{WL(X)}$ if and only if $J(X)$ is non-proper.
\end{prop}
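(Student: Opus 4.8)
The statement is an equivalence, so I plan to prove the two implications separately. The easy direction is $(\Leftarrow)$: suppose $J(X)$ is non-proper, i.e. $J(X) = \widetilde{\cA}$ for some coherent algebra $\cA$. Since $X$ consists of symmetric matrices and $X\subseteq J(X)=\widetilde{\cA}\subseteq\cA$, the coherent closure satisfies $WL(X)\subseteq\cA$. On the other hand $J(X)\subseteq\widetilde{WL(X)}$ always (as noted in the excerpt, because $X$ is symmetric), and $\widetilde{WL(X)}\subseteq\widetilde{\cA}=J(X)$. Combining, $\widetilde{WL(X)}=J(X)$, which is what we want.

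The substantive direction is $(\Rightarrow)$, and here the hypothesis $J(X)=\widetilde{WL(X)}$ is used to manufacture a coherent algebra whose symmetrization is $J(X)$. The natural candidate is $\cA := WL(X)$ itself: then by definition $\cA$ is a coherent algebra, and I must check $\widetilde{\cA}=J(X)$. By hypothesis $\widetilde{WL(X)}=J(X)$, so this is immediate once I unwind that $\widetilde{WL(X)}$ is exactly the symmetrization $\{A\in WL(X)\mid A^\top=A\}$ of the coherent algebra $WL(X)$ in the sense of the definition of "non-proper" given just before the proposition. Thus $J(X)$ is the symmetrization of the coherent algebra $WL(X)$, hence non-proper. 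So in fact the proof is short in both directions; the only point that needs a word of care is the direction $(\Leftarrow)$, where one must be slightly careful that "$J(X)=\widetilde{\cA}$ for \emph{some} coherent algebra $\cA$" forces $WL(X)\subseteq\cA$ — this uses the minimality of the coherent closure together with $X\subseteq\widetilde{\cA}\subseteq\cA$.

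I would write it up as: first record the standing inclusions $J(X)\subseteq\widetilde{WL(X)}$ (valid since $X$ is symmetric) and, trivially, $WL(X)$ is a coherent algebra with symmetrization $\widetilde{WL(X)}$. For $(\Rightarrow)$, assume $J(X)=\widetilde{WL(X)}$; then $J(X)$ is by definition the symmetrization of the coherent algebra $WL(X)$, hence non-proper. For $(\Leftarrow)$, assume $J(X)=\widetilde{\cA}$ with $\cA$ a coherent algebra; since each matrix of $X$ is symmetric we get $X\subseteq\widetilde{\cA}\subseteq\cA$, so by minimality of the coherent closure $WL(X)\subseteq\cA$, whence $\widetilde{WL(X)}\subseteq\widetilde{\cA}=J(X)$; combined with the standing inclusion $J(X)\subseteq\widetilde{WL(X)}$ this yields equality.

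The main (and only real) obstacle is conceptual rather than computational: one must be careful about the quantifier in the definition of "proper/non-proper" — non-proper means the J-algebra equals $\widetilde{\cA}$ for \emph{some} coherent algebra $\cA$, not necessarily $\cA=WL(X)$ — and then observe that any such $\cA$ must contain $WL(X)$. Everything else is a direct application of Theorem~\ref{020619b}, Theorem~\ref{040619a}, and the definitions of coherent and Jordan closure; no intersection-number manipulation is needed.
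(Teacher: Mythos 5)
Your proposal is correct and follows essentially the same route as the paper: the direction from $J(X)=\widetilde{WL(X)}$ to non-properness is immediate from the definition (with $WL(X)$ as the witnessing coherent algebra), and the converse uses $X\subseteq J(X)=\widetilde{\cA}\subseteq\cA$, the minimality of the coherent closure to get $WL(X)\subseteq\cA$, and the standing inclusion $J(X)\subseteq\widetilde{WL(X)}$ for symmetric $X$. Your explicit remark about the quantifier in ``non-proper'' is exactly the point the paper's argument silently handles, so nothing is missing.
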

\begin{proof} If $J(X) = \widetilde{WL(X)}$, then $J(X)$ is non-proper just by definition. To prove the converse implication let us assume that 
$J(X) = \widetilde{\cA}$ for some coherent algebra $\cA$ (i.e. $J(X)$ is non-proper). Then $X\subseteq \cA$ implying $X\subseteq WL(X)\subseteq \cA$. Therefore, $X\subseteq \widetilde{WL(X)}\subseteq \widetilde{\cA}$ implying 
$J(X)\subseteq \widetilde{WL(X)}\subseteq \widetilde{\cA}=J(X)$. Thus
$J(X) = \widetilde{WL(X)}$, as claimed.
\end{proof}
\noindent {\bf Remark.} Since $J(\cJ)=\cJ$ holds for every coherent J-algebra $\cJ$, the above statement implies that 
$\cJ$ is proper if and only if $\cJ\neq \widetilde{WL(\cJ)}$. 
This observation fully correlates with the discussion of the connections between two closures  in  \cite{C}.
In sections~\ref{WFDF} and \ref{switching} we present first examples of proper Jordan schemes.

We conclude this subsection by 
the statement which will be referred to as {\it Schur-Wielandt principle}. It is very useful in computing of coherent and Jordan closures. 

\begin{prop}\label{300819a} Let $\cA\subseteq M_\Omega(\F)$ be 
a subspace which is closed w.r.t. $\circ$. Then for any matrix $A\in\cA$ and $c\in\F$ the adjacency matrix of the relation $\{(\alpha,\beta)\in\Omega^2\,|\, A_{\alpha,\beta} = c\}$, which we denote by $A_c$, belongs to $\cA$.
\end{prop}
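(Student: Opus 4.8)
The plan is to recover $A_c$ from the Schur--Hadamard powers of $A$ by polynomial interpolation. First I would dispose of the trivial case: if $c$ does not occur as an entry of $A$, then the relation $\{(\alpha,\beta)\mid A_{\alpha,\beta}=c\}$ is empty, so $A_c$ is the zero matrix and lies in $\cA$. So assume from now on that $c$ is an entry of $A$. Let $c_1,\dots,c_t$ be the \emph{distinct nonzero} entries of $A$ and write $A_i:=A_{c_i}$ for the corresponding $\{0,1\}$-matrices. By construction the $A_i$ are pairwise $\circ$-orthogonal ($A_i\circ A_j=0$ for $i\neq j$) and $A=\sum_{i=1}^t c_iA_i$.

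Next I would observe that, since $\cA$ is $\circ$-closed, it contains every Schur--Hadamard power $A^{\circ k}$ with $k\ge 1$ (induction on $k$ via $A^{\circ k}=A\circ A^{\circ(k-1)}$). Moreover the $(\alpha,\beta)$-entry of $A^{\circ k}$ is the $k$-th power of the $(\alpha,\beta)$-entry of $A$, so the $\circ$-orthogonality of the $A_i$ gives
\begin{equation}\label{eq:powersum}
A^{\circ k}=\sum_{i=1}^t c_i^{\,k}A_i\qquad(k\ge 1),
\end{equation}
the zero entries of $A$ contributing nothing once $k\ge 1$.

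Now fix the index $j$ with $c=c_j$. Since the $t+1$ scalars $0,c_1,\dots,c_t\in\F$ are pairwise distinct, there is a Lagrange interpolation polynomial of degree $\le t$ taking the value $0$ at $0$ and the value $\delta_{ij}$ at each $c_i$; vanishing at $0$ forces its constant term to be $0$, so it has the form $f(x)=\sum_{k=1}^{t}a_kx^k$ with $a_k\in\F$. Using \eqref{eq:powersum},
\begin{equation*}
\sum_{k=1}^{t}a_kA^{\circ k}=\sum_{i=1}^{t}\Bigl(\sum_{k=1}^{t}a_kc_i^{\,k}\Bigr)A_i=\sum_{i=1}^{t}f(c_i)A_i=A_j,
\end{equation*}
and the left-hand side is an $\F$-linear combination of matrices of $\cA$, hence lies in $\cA$. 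This settles every value $c\neq 0$.

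For $c=0$ the same scheme requires additionally $J_\Omega\in\cA$ — which holds in all applications in this paper, where $\cA$ is a coherent or a coherent Jordan algebra — since then $A_0=J_\Omega-\sum_{i=1}^{t}A_i\in\cA$ (and if $A$ has no zero entry at all, $A_0=0\in\cA$ trivially). I expect this to be the only step that needs genuine care: the naive ``$A^{\circ 0}=J_\Omega$'' is not available in a mere $\circ$-closed subspace, which is exactly why the interpolating polynomial must be taken with zero constant term (equivalently, why one interpolates at $0$ as well), making the nonzero values unproblematic while leaving the value $0$ to rely on $J_\Omega\in\cA$. Everything else is routine linear algebra.
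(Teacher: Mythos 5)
Your proof is correct, and it takes a more self-contained route than the paper, which disposes of this proposition in one line: it follows ``immediately'' from Proposition~\ref{020619a} by writing $A=\sum_i c_iA_i$ over the standard basis of pairwise $\circ$-orthogonal $\{0,1\}$-matrices and summing those $A_i$ whose coefficient equals $c$. You instead interpolate directly on the Schur--Hadamard powers $A^{\circ k}$ of the single matrix $A$; this buys independence from Proposition~\ref{020619a} (whose own proof rests on the semisimplicity of $\circ$-closed subspaces, i.e.\ essentially the same Vandermonde fact you use), at the cost of a short Lagrange computation, while the paper's version is shorter once the standard basis is available. Your caveat about $c=0$ is well taken and is in fact a caveat to the paper's literal statement as well: for a subspace that is merely $\circ$-closed the claim fails at $c=0$ (take $\cA$ to be the span of a single $\{0,1\}$-matrix $B\neq J_\Omega$; then $A_0=J_\Omega-B\notin\cA$), and the standard-basis derivation likewise only yields the nonzero values; in every application in the paper $\cA$ is a coherent (Jordan) algebra containing $J_\Omega$, exactly the extra hypothesis you identify, so the principle is used correctly throughout. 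In short: your argument is sound, slightly more careful than the source, and differs only in replacing the appeal to the standard basis by direct polynomial interpolation.
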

\noindent{\bf Proof} follows immediately from Proposition~\ref{020619a}. \hfill $\square$

\section{Basic facts about coherent Jordan configurations}\label{JC}

Jordan configurations share some basic properties of usual coherent configurations. In this section we need only a part of them. To prove those properties we introduce the Jordan product of binary relations $R\star S := RS\cup SR$ where $RS$ is the standard relational product 
\textcolor{black}{(i.e. the composition of relations)}. We use the same notation $\star$ both for relational and matrix Jordan products because of the following identity: ${\mathsf {Supp} }(\und{R}\star \und{S}) = R\star S$.  Since $\cC$ is a JC, the set $\cC^\cup$ is closed with respect to the boolean operations and the Jordan relational product $\star$. 

The statement below collects
the main properties needed for further presentation.
\begin{prop}\label{040619b} Let $\XX=(\Omega,\cC)$ be a JC. Then 
\begin{enumerate}
\item[{\rm (a)}] there exists a partition $\Omega_1,...,\Omega_f$ of $\Omega$ such that $1_{\Omega_i}\in \cC$, the sets $\Omega_i$ are called the \textit{fibers} of $\XX$;
\item[{\rm (b)}] given a basic relation $C\in\cC$, there exist two fibers $\Omega_i,\Omega_j$ (not necessarily distinct) s.t. 
$C\subseteq (\Omega_i\times \Omega_j) \cup (\Omega_j\times \Omega_i)$;
if $\omega,\omega'\in\Omega_i$ or $\omega,\omega'\in\Omega_j$, then $|C(\omega)|+|C^\top(\omega)|=|C(\omega')|+|C^\top(\omega')|$. In opther words, the graph $(\Omega_i\cup\Omega_j, C\cup C^\top)$ is bi-regular;
\item[{\rm (c)}] $\cC$ is a disjoint union $\cC = \bigcup_{1\leq a \leq b \leq f} \cC^{ab}$ where $\cC^{ab} = \{C\in\cC\,|\, C\subseteq\Omega_a\times\Omega_b\cup \Omega_b\times\Omega_a\}$.
\end{enumerate}
\end{prop}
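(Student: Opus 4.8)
The plan is to pass to the adjacency algebra $\cA=\F\langle\und\cC\rangle$ over a field $\F$ of characteristic zero; by Theorem~\ref{040619a} this is a coherent J-algebra, hence closed under $\star$ and $\circ$, and every matrix in it is constant on the color classes of $\cC$. I would then extract the combinatorial relations I need from well-chosen $\star$-products via the Schur--Wielandt principle (Proposition~\ref{300819a}), reading off cardinalities with Proposition~\ref{090920a}. I will also use the dichotomy recorded in the excerpt: for a basic $C$ and any $D\in\cC^\cup$, either $C\cap D=\emptyset$ or $C\subseteq D$.

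First, (a) needs no coherence at all. By the rainbow axiom $1_\Omega$ is a union of basic relations, say $1_\Omega=e_1\cup\dots\cup e_f$ with $e_1,\dots,e_f\in\cC$ pairwise disjoint; each $e_i\subseteq 1_\Omega$ has the form $e_i=1_{\Omega_i}$ with $\Omega_i:=\{\omega:(\omega,\omega)\in e_i\}$, and disjointness together with $\bigcup_i e_i=1_\Omega$ shows that $\{\Omega_1,\dots,\Omega_f\}$ partitions $\Omega$. These are the fibers.

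Next I would prove the containment claim in (b), which immediately yields (c). Fix a basic $C\in\cC$. For each $i$ the matrix $\und{e_i}$ is a diagonal $\{0,1\}$-matrix, and by Proposition~\ref{090920a} the matrices $\und C\cdot\und{e_i}$ and $\und{e_i}\cdot\und C$ are the adjacency matrices of $C\cap(\Omega\times\Omega_i)$ and $C\cap(\Omega_i\times\Omega)$. Hence $\und C\star\und{e_i}\in\cA$ has entry $1$ exactly on $P_i:=C\cap(\Omega_i\times\Omega_i)$, entry $\tfrac12$ exactly on $Q_i:=C\cap\big((\Omega_i\times(\Omega\setminus\Omega_i))\cup((\Omega\setminus\Omega_i)\times\Omega_i)\big)$, and $0$ elsewhere. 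By Schur--Wielandt, $P_i,Q_i\in\cC^\cup$; being subsets of the basic relation $C$, each equals $\emptyset$ or $C$. If $P_i=C$ for some $i$ then $C\subseteq\Omega_i\times\Omega_i$ and we are done with $\Omega_i=\Omega_j$. Otherwise every pair of $C$ joins two distinct fibers, so $\bigcup_i Q_i=C$ and $Q_a=C$ for some $a$; for any $(\alpha,\beta)\in C$ its endpoints lie in $\Omega_a$ and in some $\Omega_b$ with $b\ne a$, so $(\alpha,\beta)\in Q_b$, forcing $Q_b=C$ too, and the boolean closure of $\cC^\cup$ gives $C=Q_a\cap Q_b\subseteq(\Omega_a\times\Omega_b)\cup(\Omega_b\times\Omega_a)$. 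Since the unordered pair of fibers met by $C$ is manifestly determined by $C$, each basic relation lies in a unique class $\cC^{ab}$ with $a\le b$, and as basic relations are nonempty this is exactly the disjoint decomposition $\cC=\bigcup_{1\le a\le b\le f}\cC^{ab}$ of (c).

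Finally, bi-regularity. Since $\cC^\top=\cC$ we have $(\und C)^\top=\und{C^\top}\in\cA$, hence $\und C\star\und{C^\top}\in\cA$; by Proposition~\ref{090920a} its $(\omega,\omega)$-entry equals $\tfrac12\big(|C(\omega)|+|C^\top(\omega)|\big)$. As a matrix of $\cA$ it is constant on color classes, and since $1_{\Omega_i}\in\cC$ all diagonal pairs $(\omega,\omega)$ with $\omega\in\Omega_i$ carry the same color; therefore $|C(\omega)|+|C^\top(\omega)|$ depends only on the fiber of $\omega$, giving the stated equalities for $\omega,\omega'$ in $\Omega_i$ or in $\Omega_j$. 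The parenthetical remark follows because each basic $C$ is symmetric or antisymmetric: $|(C\cup C^\top)(\omega)|$ equals $|C(\omega)|+|C^\top(\omega)|$ in the antisymmetric case and $|C(\omega)|=\tfrac12\big(|C(\omega)|+|C^\top(\omega)|\big)$ in the symmetric case, so it too is a fiber invariant, that is, $(\Omega_i\cup\Omega_j,\ C\cup C^\top)$ is bi-regular.

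I expect the only genuine obstacle to be the off-diagonal case in the containment proof of (b): one must track carefully which fibers a given basic relation touches and recognize the rectangle $(\Omega_a\times\Omega_b)\cup(\Omega_b\times\Omega_a)$ as the intersection $Q_a\cap Q_b$ of two $\cC^\cup$-relations. Everything else — part (a), and in particular bi-regularity via the diagonal of $\und C\star\und{C^\top}$ — is routine once the adjacency-algebra viewpoint and the Schur--Wielandt principle are in place.
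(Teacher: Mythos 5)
Your proof is correct and in essence follows the paper's own argument: part (a) from the rainbow axioms, parts (b)--(c) by Jordan-multiplying with the fiber idempotents $\und{1_{\Omega_i}}$ and using the Schur--Wielandt principle together with the basic-relation dichotomy to trap each $C$ in a rectangle $(\Omega_a\times\Omega_b)\cup(\Omega_b\times\Omega_a)$, and bi-regularity from $\star$-closure read off at diagonal pairs (your diagonal of $\und{C}\star\und{C^\top}$ is exactly condition~\eqref{eq2} applied with $D=C^\top$). The only difference is organizational: the paper works with the relational Jordan product to show globally that every rectangle $(\Omega_i\times\Omega_j)\cup(\Omega_j\times\Omega_i)$ is a $\CC$-relation via $(1_{\Omega_i}\star\Omega^2)\star 1_{\Omega_j}$ and complementation, whereas you fix $C$ and separate the entry values $1$ and $\tfrac12$ of $\und{C}\star\und{1_{\Omega_i}}$ -- a harmless variation.
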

\begin{proof}  The first property follows from the definition of a rainbow.

(b)+(c) As $\cC^\cup$ is $\star$-closed, the product 
$(1_{\Omega_i}\star \Omega^2)\star 1_{\Omega_j}$ belongs to $\cC^\cup$ for any pair of indices $1\leq i,j\leq f$. If $i\neq j$, then $(1_{\Omega_i}\star \Omega^2)\star 1_{\Omega_j} = (\Omega_j\times\Omega_i)\cup (\Omega_i\times\Omega_j)$. Therefore  the sets $(\Omega_j\times\Omega_i)\cup (\Omega_i\times\Omega_j), 1\leq i < j\leq f$ belong to $\cC^\cup$. Now it follows from 
$$
\cC^\cup\ni 
(1_{\Omega_i}\star \Omega^2)\setminus \left( \bigcup_{j\neq i}(\Omega_i\times\Omega_j)\cup (\Omega_j\times \Omega_i) \right) =\Omega_i\times\Omega_i
$$
that any relation of the form $\Omega_i\times\Omega_j\cup \Omega_j\times \Omega_i$ (here we do not exclude the case of $i=j$) belongs to $\cC^\cup$. Those relations form a partition of $\Omega^2$. Therefore each basic relation $C$ is contained only in one of them. Thus $C\subseteq (\Omega_i\times\Omega_j)\cup (\Omega_j\times\Omega_i)$ for some $i,j$. \textcolor{black}{ Pick an arbitrary pair $\omega,\omega'\in\Omega_i$.
Then $\CC(\omega,\omega)=1_{\Omega_i} =\CC(\omega',\omega')$ and it follows from~\eqref{eq2} with $D=C$
 that }
$|C(\omega)| + |C^\top(\omega)|=|C(\omega')| + |C^\top(\omega')|$. This proves the second part of the claim.

Part (c) follows directly form the first part of (b).
\end{proof}

\textcolor{black}{Let $\Delta = \bigcup_{i\in I} \Omega_i$ be a non-empty union of fibers of $\XX$. It follows from the above proposition that the set of relations $\CC_\Delta: = \bigcup_{i,j\in I} \CC^{ij}$ forms a Jordan configuration on $\Delta$. In particular, the set of relations 
$\CC^{ii}$ forms a homogeneous  JC on a fiber $\Omega_i$.  As in the theory of CCs we call $(\Omega_i,\CC^{ii})$ a {\it homogeneous constituent} of $(\Omega,\CC)$.}

It is well-known that a coherent configuration is homogeneous if and only if \textcolor{black}{it is regular.} The following example shows that for Jordan configurations it is not true anymore. 

Let $\XX$ be a rank $4$ rainbow on the point set $\Omega = \{1,2,3,4\}$ the adjacency matrix of which has the following 
form
\textcolor{black}{$$
{\mathfrak A}:=\left(
\begin{array}{cccc}
X& Y & Z & Z\\
Y & X & Z & Z\\
W & W & X & Y \\
W & W & Y & X
\end{array}
\right).
$$
}
A direct check shows that $\XX$ is a \textcolor{black}{homogeneous but  non-regular Jordan configuration.} The statement below describes when such a situation occurs.
\begin{prop}\label{240719a} Let $\XX=(\Omega,\CC)$ be a JC with one fiber. If $\XX$ is not homogeneous, then there exists a bi-partition $\Omega = \Omega_0\cup \Omega_1$ with
$|\Omega_1|=|\Omega_0|$ such that each non-regular $C\in\CC$ is \textcolor{black}{"bi-regular"}, i.e. $C\subseteq \Omega_i\times\Omega_{1-i}$ for some $i\in\{0,1\}$ and $|C(\omega)|$ is constant for all $\omega\in\Omega_i$. 
\end{prop}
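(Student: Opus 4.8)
The plan is to extract, from the failure of homogeneity, a single non-regular basic relation and bootstrap from it the claimed bipartition. We know from Proposition~\ref{040619b}(a) that $1_\Omega\in\CC$ since $\XX$ has one fiber, so $\XX$ is homogeneous precisely when every $C\in\CC$ is regular. Hence if $\XX$ is not homogeneous there is a basic relation $C\in\CC$ which is not regular; its valency function $\omega\mapsto |C(\omega)|$ is non-constant, but by Proposition~\ref{040619b}(b) (applied with the single fiber $\Omega$) the sum $\omega\mapsto |C(\omega)|+|C^\top(\omega)|$ \emph{is} constant, say equal to $k$. First I would look at the matrix $\und{C}\circ\und{C}^\top$ and the degree matrices: the relation whose adjacency matrix records $C$-out-valency is itself controllable by the Schur--Wielandt principle (Proposition~\ref{300819a}) once we form $(\und{C}\star\und{\Omega^2})$ and similar products inside the coherent J-algebra $\F\langle\und\CC\rangle$. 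Concretely, $\und{C}\cdot J_\Omega$ has constant rows (each entry $|C(\omega)|$), $J_\Omega\cdot\und{C}$ has constant columns, and $\und{C}\star J_\Omega=\half(\und{C}\cdot J_\Omega+J_\Omega\cdot\und{C})$ lies in the algebra; applying Proposition~\ref{300819a} to its entries partitions $\Omega$ according to the value of $|C(\omega)|$.

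Next I would argue that this value takes exactly two distinct values and that the two classes have equal size. Let $\Omega_0=\{\omega : |C(\omega)| < |C^\top(\omega)|\}$ and $\Omega_1=\{\omega : |C(\omega)|>|C^\top(\omega)|\}$ — note $|C(\omega)|=|C^\top(\omega)|$ is impossible on all of $\Omega$ or else summing would force regularity of the combined graph and, combined with constancy of the sum, regularity of $C$ itself, contradicting the choice of $C$. The key identity is the double count $\sum_{\omega\in\Omega}|C(\omega)|=|C|=\sum_{\omega\in\Omega}|C^\top(\omega)|$, so the "excess" $\sum_\omega(|C(\omega)|-|C^\top(\omega)|)$ vanishes. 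To turn this into equal-size classes and a two-valued valency I would examine $1_{\Omega_0}$ and $1_{\Omega_1}$: a priori these need not be $\CC$-relations, but the homogeneous constituent structure plus the rank-$4$ example in the text suggests the right move is to show $1_{\Omega_0}\star\Omega^2\star 1_{\Omega_1}$ and its complement inside $\Omega^2$ already sit in $\CC^\cup$, so that $\Omega_0\times\Omega_1\cup\Omega_1\times\Omega_0$ is a $\CC$-relation; then bi-regularity of that relation (Proposition~\ref{040619b}(b) again) pins down $|\Omega_0|=|\Omega_1|$ and forces each non-regular basic relation to lie inside $\Omega_0\times\Omega_1\cup\Omega_1\times\Omega_0$, and in fact inside one of $\Omega_0\times\Omega_1$ or $\Omega_1\times\Omega_0$ by the observation that a basic relation meeting a $\CC^\cup$-relation is contained in it.

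Finally I would verify the uniformity statement: for a non-regular $C$ with $C\subseteq\Omega_i\times\Omega_{1-i}$, the out-valency $|C(\omega)|$ is constant on $\Omega_i$. This should come from applying \eqref{eq2} to the coherent J-algebra restricted to the relation $\Omega_i\times\Omega_{1-i}\cup\Omega_{1-i}\times\Omega_i$: on that part $C$ and $C^\top$ have disjoint supports living over opposite "sides", so the mixed sum in \eqref{eq2} splits and constancy of $|C(\omega)|+|C^\top(\omega)|$ over the fiber specializes to constancy of $|C(\omega)|$ over $\Omega_i$ (where $C^\top(\omega)=\emptyset$) and of $|C^\top(\omega)|$ over $\Omega_{1-i}$. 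I expect the main obstacle to be the middle step: showing that the bipartition $\{\Omega_0,\Omega_1\}$ induced by one non-regular relation is \emph{canonical}, i.e. the same bipartition works simultaneously for every non-regular basic relation, and that it genuinely has two blocks of equal size rather than some finer or unbalanced partition. The rank-$4$ matrix $\mathfrak{A}$ displayed before the statement is the model to keep in mind, and I would use it to guide which Jordan products to form; the delicate point is that Jordan (as opposed to ordinary) coherence gives us only the \emph{symmetrized} intersection numbers, so all valency arguments must be run through sums $|C(\cdot)|+|C^\top(\cdot)|$ and only decoupled at the very end using the geometric separation of supports across the bipartition.
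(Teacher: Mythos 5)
Your opening moves coincide with the paper's (form $\und{C}\star J_\Omega$, use the constant diagonal to get $|C(\omega)|+|C^\top(\omega)|=k$, invoke the Schur--Wielandt principle), but the proposal stops short of the three or four steps that actually carry the proof, and at those points it either defers ("I would examine\dots", "the right move is to show\dots") or asserts what needs proving. First, Schur--Wielandt applied to $\und{C}\star J_\Omega$ does not "partition $\Omega$ by the value of $|C(\omega)|$": it produces level sets inside $\Omega^2$, and a generic level set of the entry $\frac{1}{2}(|C(\alpha)|-|C(\beta)|+k)$ is not a product set. The paper's trick is to take the \emph{maximal} level set, which is exactly $\Omega_0\times\Omega_1$ with $\Omega_0$ the points of maximal and $\Omega_1$ of minimal out-valency; then $\bigl((\Omega_0\times\Omega_1)\star(\Omega_1\times\Omega_0)\bigr)\cap 1_\Omega=1_{\Omega_0}\cup 1_{\Omega_1}$ lies in $\CC^\cup$ and sits inside the basic relation $1_\Omega$, forcing $\Omega_0\cup\Omega_1=\Omega$, i.e. the valency function is two-valued. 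You explicitly concede that $1_{\Omega_0},1_{\Omega_1}$ "need not be $\CC$-relations" and offer no substitute for this mechanism, so the existence of the bipartition itself is unproved in your plan. Your alternative definition of $\Omega_0,\Omega_1$ by the sign of $|C(\omega)|-|C^\top(\omega)|$ has the further defect that points with $|C(\omega)|=|C^\top(\omega)|$ are omitted; ruling out equality "on all of $\Omega$" does not rule it out at some points.

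Second, once $\Omega_0\times\Omega_1\in\CC^\cup$ and $\Omega_0\cup\Omega_1=\Omega$ are known, your idea of deducing $|\Omega_0|=|\Omega_1|$ from bi-regularity of the basic relations inside $\Omega_0\times\Omega_1$ can indeed be made to work (the paper instead computes $(\und{\Omega_0\times\Omega_1})\star(\und{\Omega_1\times\Omega_0})=\frac{1}{2}\bigl(|\Omega_1|\,\und{\Omega_0\times\Omega_0}+|\Omega_0|\,\und{\Omega_1\times\Omega_1}\bigr)$ and applies Schur--Wielandt plus the fiber hypothesis), but two further gaps remain untouched. (i) Knowing $\Omega_0\times\Omega_1\cup\Omega_1\times\Omega_0\in\CC^\cup$ only tells you that a basic $C$ lies in it \emph{or} in its complement $\Omega_0^2\cup\Omega_1^2$; the paper excludes the latter by the double count $|C\cap\Omega_0^2|=k_0|\Omega_0|=(k-k_0)|\Omega_0|$, giving $k=2k_0=2k_1$ and contradicting $k_0>k_1$. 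Your text simply asserts containment in the cross part. (ii) The independence of $\{\Omega_0,\Omega_1\}$ from the chosen non-regular relation, which you correctly flag as the main obstacle, is left without any argument; the paper settles it by noting that if two such bipartitions were distinct then $\bigl((\Omega_0\times\Omega_1)\star(\Omega_0'\times\Omega_1')\bigr)\cap 1_\Omega=1_{\Omega_0\cap\Omega_1'}\cup 1_{\Omega_0'\cap\Omega_1}$ is a nonempty $\CC^\cup$-relation inside the basic relation $1_\Omega$, hence equals $1_\Omega$, which forces the two bipartitions to coincide. Without (i), (ii) and the two-valuedness step, the proposal is a plausible outline rather than a proof.
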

\begin{proof} Denote by $\cA$ the adjacency algebra of $\XX$ over the rationals. Pick an arbitrary $C\in\CC$. Consider the product $\und{C}\star J$ (here and later in the proof $J:=J_{\Omega}$). By direct calculations we obtain that $(\und{C}\star J)_{\alpha,\beta} =\half( |C(\alpha)| + |C^\top(\beta)|)$. Since $1_\Omega$ is a basic relation, every matrix in $\cA$ has a constant diagonal. Therefore, $|C(\alpha)|+|C^\top(\alpha)|$ does not depend on $\alpha$. Let us denote this number as $k$. Then  
$(\und{C}\star J)_{\alpha,\beta} =\frac{1}{2}(|C(\alpha)|-|C(\beta)|+k)$. Assume now that $C$ is not regular. Denote by $\Omega_0,\Omega_1$ the subsets with the maximal and minimal values of $|C(\omega)|$, respectively. Since $C$ is not regular, the sets $\Omega_0,\Omega_1$ are non-empty and disjoint. \textcolor{black}{It follows from the definition of the $\Omega_i$'s that the function  $\omega\mapsto |C(\omega)|$ is constant on $\Omega_0$ and $\Omega_1$. We set $k_i:=|C(\omega)|,\omega\in\Omega_i$. 
It follows from the definition of $\Omega_0$ and $\Omega_1$ that  $k_0 > k_1$.} 

It follows from the choice of $\Omega_i,i=0,1$ that the maximal value of the matrix entries $(\und{C}\star J)_{\alpha,\beta}$ is reached iff $(\alpha,\beta)\in\Omega_0\times \Omega_1$.  By the Schur-Wielandt principle $\und{\Omega_0\times\Omega_1}\in\cA$ and, therefore, 
$\Omega_1\times\Omega_0\in\cC^\cup$  and $\cC^\cup\ni (\Omega_0\times\Omega_1)\star (\Omega_1\times\Omega_0)\cap 1_\Omega = 1_{\Omega_0}\cup 1_{\Omega_1}$. Together with $1_\Omega\in \cC$ we conclude that 
$1_{\Omega_0}\cup 1_{\Omega_1} = 1_\Omega$, or, equivalently, $\Omega_0\cup \Omega_1 = \Omega$. \textcolor{black}{Therefore 
the sets $\Omega_0,\Omega_1$ form the required bi-partition of $\Omega$.}

\textcolor{black}{Let us show now that $|\Omega_0|=|\Omega_1|$. It follows from $\und{\Omega_0\times\Omega_1}\in\cA\ni \und{\Omega_1\times\Omega_0}$ that  $$
(\und{\Omega_0\times\Omega_1})\star
(\und{\Omega_1\times\Omega_0})  \in\cA.$$
Direct computation yields us
$$
(\und{\Omega_0\times\Omega_1})\star
(\und{\Omega_1\times\Omega_0}) =
\frac{1}{2}\left(|\Omega_1|(\und{\Omega_0\times\Omega_0})+|\Omega_0|(\und{\Omega_1\times\Omega_1})\right)\in\cA.$$
If $|\Omega_0|\neq |\Omega_1|$, then by the Schur-Wielandt principle
$\Omega_0\times \Omega_0\in\CC^\cup$ implying 
$1_{\Omega_0} =(\Omega_0\times \Omega_0)\cap 1_\Omega\in \CC^\cup$ contrary to the assumption that $\Omega$ is a fiber. 
Therefore $|\Omega_0|=|\Omega_1|$.}

\textcolor{black}{Now we prove that $C\subseteq \Omega_{i}\times\Omega_{1-i}$ for some $i=0,1$. \\
Since $\Omega_0\times\Omega_1$ and 
$\Omega_1\times\Omega_0$ are $\CC$-relations, the set  
$\Omega^2\setminus (\Omega_0\times\Omega_1\cup \Omega_1\times\Omega_0) = \Omega_0^2\cup\Omega_1^2$ is a $\CC$-relation too.
Since $C$ is a basic relation, it is contained only in one of the three relations: $\Omega_0\times\Omega_1,\Omega_1\times\Omega_0,  \Omega_0^2\cup\Omega_1^2$. If $C$ is contained in the first or second relation, then we are done. Assume, towards a contradiction, that $C\subseteq\Omega_0^2\cup\Omega_1^2$. Then
$|C\cap\Omega_0^2| = \sum_{\alpha\in\Omega_0}|C(\alpha)| =
k_0 |\Omega_0|$. On the other hand,
$$
|C\cap\Omega_0^2| = |(C\cap\Omega_0^2)^\top| = 
|C^\top\cap\Omega_0^2| = \sum_{\alpha\in\Omega_0}|C^\top(\alpha)| = \sum_{\alpha\in\Omega_0}(k - |C(\alpha)|) =
(k-k_0) |\Omega_0|.$$ Therefore $k=2k_0$. The same counting applied to $|C\cap \Omega_1^2|$ yields us $k=2k_1$. Hence $k_0 = k_1$, contrary to $k_0 > k_1$.}

It remains to show that the partition $\Omega = \Omega_0\cup\Omega_1$ does not depend on the choice of a non-regular  
relation $C\in\CC$. Consider another non-regular relation $C'\in\CC$. Let $\Omega'_1$ and $\Omega'_0$ be the parts of the bi-partition corresponding to $C'$. Assume that the partition $\{\Omega_0,\Omega_1\}$ is different from $\{\Omega'_0,\Omega'_1\}$ (recall that $|\Omega_i|=|\Omega'_i|=|\Omega|/2$ for $i=0,1$).
Then $\Omega_i\cap\Omega'_j \neq \emptyset$ for any $i,j\in\{0,1\}$.
Since $\Omega_0\times\Omega_1,\Omega'_0\times\Omega'_1$ are $\CC$-relations and $1_\Omega$ is a basic $\CC$-relation, the relation
$\left( (\Omega_0\times\Omega_1)\star (\Omega'_0\times\Omega'_1)\right)\cap 1_\Omega$ is either empty or coincides with $1_\Omega$.
By direct calculations we obtain 
$$
\left( (\Omega_0\times\Omega_1)\star (\Omega'_0\times\Omega'_1)\right)\cap 1_\Omega = 1_{\Omega_0\cap\Omega'_1} \cup 1_{\Omega'_0\cap\Omega_1}.
$$
Therefore $1_{\Omega_0\cap\Omega'_1} \cup 1_{\Omega'_0\cap\Omega_1} = 1_\Omega$, or, equivalently, 
$(\Omega_0\cap\Omega'_1) \cup (\Omega'_0\cap\Omega_1) = \Omega$. This, in turn, implies 
$
\Omega'_1\subseteq\Omega_0, \Omega'_0\subseteq\Omega_1$. Consequently, $
\Omega_0 = \Omega'_1, \Omega'_0 = \Omega_1$ and $\{\Omega_0,\Omega_1\}=\{\Omega'_0,\Omega'_1\}$. A contradiction.
\end{proof}
\begin{prop}\label{040619c}
Let $\XX=(\Omega,\cC)$ be a symmetric coherent J-configuration and $\cA\subseteq M_\Omega(\F)$ its adjacency algebra. 
The  following are equivalent.
\begin{enumerate}
\item[{\rm (a)}] $\XX$ is \textcolor{black}{regular};
\item[{\rm (b)}]  $\XX$ is \textcolor{black}{homogeneous}, i.e. $1_\Omega\in\cC$;
\item[{\rm (c)}]  $\cA\cdot J_\Omega =  \F\langle J_\Omega\rangle$;
\item[{\rm (d)}]  $\cA\star J_\Omega  = \F\langle J_\Omega\rangle$.
\end{enumerate} 
\end{prop}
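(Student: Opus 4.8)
The plan is to prove the equivalences by running the cycle (a)$\Rightarrow$(b)$\Rightarrow$(a) together with the cycle (a)$\Rightarrow$(c)$\Rightarrow$(d)$\Rightarrow$(a); the two cycles together yield the four-fold equivalence. Everything rests on a handful of entrywise formulas that I would record first. Writing $J:=J_\Omega$, Proposition~\ref{090920a} applied with the second relation equal to $\Omega^2$ (so that $(\Omega^2)^\top(\beta)=\Omega$) gives, for every basic relation $C\in\CC$, the identities $(\und{C}\cdot J)_{\alpha,\beta}=|C(\alpha)|$ and $(J\cdot\und{C})_{\alpha,\beta}=|C^\top(\beta)|$, hence $(\und{C}\star J)_{\alpha,\beta}=\half\bigl(|C(\alpha)|+|C^\top(\beta)|\bigr)$. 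I would also note once and for all that $J\in\cA$ and $J\cdot J=|\Omega|\,J$, so $J$ itself lies in $\cA\cdot J$ and (since $J\star J=J\cdot J$) in $\cA\star J$; consequently (c) is equivalent to the inclusion $\cA\cdot J\subseteq\F\langle J\rangle$ and (d) to $\cA\star J\subseteq\F\langle J\rangle$, and by linearity each of these inclusions only has to be tested on the standard basis $\{\und{C}\mid C\in\CC\}$.

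For (a)$\Rightarrow$(b) I would invoke the general fact, recalled in the text right after the definition of a homogeneous rainbow, that a regular rainbow is homogeneous: by Proposition~\ref{020619z}(a) the diagonal $1_\Omega$ is a disjoint union of basic relations, each of which is contained in $1_\Omega$ and therefore has valency at most $1$; regularity forces this valency to be exactly $1$, so each such basic relation equals $1_\Omega$, and being disjoint there is only one of them, i.e.\ $1_\Omega\in\CC$. For (b)$\Rightarrow$(a): since $1_\Omega\in\CC$, the configuration $\XX$ has a single fiber $\Omega$, so Proposition~\ref{040619b}(b) applies with both fibers equal to $\Omega$ and shows that $\omega\mapsto|C(\omega)|+|C^\top(\omega)|$ is constant on $\Omega$ for every $C\in\CC$. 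As $\XX$ is symmetric we have $C^\top=C$, so $|C(\omega)|$ itself is constant; thus every basic relation is regular and $\XX$ is regular.

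For the second cycle, (a)$\Rightarrow$(c) is immediate: regularity means $\und{C}\cdot J=k_C\,J$ for every $C\in\CC$ (where $k_C$ is the valency), so $\cA\cdot J\subseteq\F\langle J\rangle$, and with $J\in\cA\cdot J$ this gives equality. For (c)$\Rightarrow$(d) I would use that $\cA$ is transpose-closed: for $A\in\cA$ we have $A^\top\in\cA$, hence $A^\top\cdot J=\lambda J$ for a scalar $\lambda$ by (c), and transposing gives $J\cdot A=(A^\top\cdot J)^\top=\lambda J$; therefore $A\star J=\half(A\cdot J+J\cdot A)\in\F\langle J\rangle$, and with $J\in\cA\star J$ we obtain (d). For (d)$\Rightarrow$(a): from $\cA\star J=\F\langle J\rangle$ we get $\und{C}\star J=c_C\,J$ for each basic $C$ and some scalar $c_C$, so reading off entries and using $C^\top=C$ gives $|C(\alpha)|+|C(\beta)|=2c_C$ for all $\alpha,\beta\in\Omega$; fixing $\beta$ and letting $\alpha$ vary shows $|C(\alpha)|$ is constant, so $C$ is regular, hence $\XX$ is regular.

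The only step where the hypotheses genuinely bite is (b)$\Rightarrow$(a): it uses both Proposition~\ref{040619b}(b) --- itself a consequence of the Jordan axiom~\eqref{eq2} --- and the symmetry of all basic relations, and this is precisely where one cannot drop symmetry, since the rank-$4$ rainbow displayed just before Proposition~\ref{240719a} is a homogeneous, non-symmetric Jordan configuration that fails to be regular. So I expect that implication to be the conceptual heart of the argument; the remaining implications, (a)$\Leftrightarrow$(c)$\Leftrightarrow$(d), are routine bookkeeping once the three product formulas above are in hand.
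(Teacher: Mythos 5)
Your proof is correct and follows essentially the same route as the paper: the implications (a)$\Rightarrow$(b), (a)$\Rightarrow$(c) are the easy rainbow facts, (b)$\Rightarrow$(a) rests on Proposition~\ref{040619b}(b) plus symmetry, and your direct (d)$\Rightarrow$(a) is the same entrywise row-sum computation the paper uses for (d)$\Rightarrow$(c). The only cosmetic differences are that you spell out the ``regular implies homogeneous'' step and close the cycle at (a) instead of (c), which changes nothing of substance.
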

\begin{proof} The implications (a)$\implies$(b) and (a)$\implies$(c) hold for any rainbow. The implication (c)$\implies$(d) follows directly from the definition of the Jordan product. 

(c)$\implies$(a). It follows from (c) that every matrix $A\in\cA$ has a constant row-sum. In particular, each adjacency matrix $\und{C},C\in\cC$ has this property. Therefore $(\Omega,C)$ is a regular graph for every $C\in\cC$. 

(b)$\implies$(a). Since $\Omega$ is a unique fiber of $\XX$,
every graph $(\Omega,C\cup C^\top),C\in\cC$ is regular by Proposition~\ref{040619b}. By assumption $C=C^\top$. Therefore
$(\Omega,C)$ is a regular graph for every $C\in\cC$, and the configuration is homogeneous. 

(d)$\implies$(c). \textcolor{black}{Pick an arbitrary $A\in\cA$.} A direct calculation shows that $(A\cdot J_\Omega)_{\alpha,\beta} = r(\alpha), (J_\Omega\cdot A)_{\alpha,\beta}=c(\beta)$ where $r(\alpha)$ and $c(\beta)$ stand for $\alpha$-th row and $\beta$-th column sums of $A$. Since $A$ is symmetric, $c(\beta)=r(\beta)$ and $(A\star J_{\Omega})_{\alpha,\beta} = r(\alpha) + r(\beta)$. If $\cA\star J_\Omega = \langle J_\Omega \rangle$ then $r(\alpha)+r(\beta)$ does not depend on a choice of $\alpha$ and $\beta$. Therefore $r(\alpha)$ is constant, i.e. $A$ has a constant row sum. This implies $A\cdot J_\Omega\in\langle J_\Omega\rangle$, as desired.
\end{proof}

\section{Symmetric Jordan configurations of small rank}\label{SJC}


The main goal of this section is to prove the following
\begin{theorem}\label{120319a} Let $\XX$ be a symmetric 
coherent J-configuration. If $\cC$ is proper, then $|\cC|\geq 5$. In the case of equality $\XX$ is homogeneous and $\cJ:=\mathbb{R}\langle\und{\cC}\rangle$ is  isomorphic (as a Jordan algebra) to $\bR\oplus\bR \oplus \Sm_2(\bR)$.
\end{theorem}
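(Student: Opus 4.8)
The proof splits naturally into two parts: a rank lower bound ($|\cC|\geq 5$), and a structural analysis of the extremal case ($|\cC|=5$). I would first dispose of the low-rank cases. If $|\cC|\leq 2$ the rainbow is trivial, hence a (symmetric) association scheme, so improper. For $|\cC|=3$ or $|\cC|=4$ the idea is to show that the adjacency algebra $\cJ=\bR\langle\und\cC\rangle$ is automatically closed under ordinary matrix multiplication, so that $\cJ$ is a coherent algebra and $\XX$ is improper. The key leverage is condition~\eqref{eq2}: it controls the \emph{symmetric} part $\und C\cdot\und D+\und D\cdot\und C$ of each product, and for symmetric matrices $\und C\cdot\und D+\und D\cdot\und C=2(\und C\star\und D)\in\cJ$ already. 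So the only obstruction to being a coherent algebra is the \emph{antisymmetric} part $\und C\cdot\und D-\und D\cdot\und C$, which is skew-symmetric. One must argue that in low rank there is no room for a nonzero skew-symmetric matrix lying in (a suitable extension of) $\cJ$; here I would first reduce to the homogeneous case (if $\XX$ is not homogeneous, Proposition~\ref{240719a} gives a bipartition $\Omega=\Omega_0\cup\Omega_1$ with all non-regular basic relations bi-regular, and a short count on how many basic relations such a configuration can have, together with the fact that $1_{\Omega_0},1_{\Omega_1}$ are not basic, forces $|\cC|\geq 5$ anyway — or one handles the few small non-homogeneous patterns like the displayed rank-$4$ example directly and checks they are improper). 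In the homogeneous case, $\cJ$ is the Bose--Mesner-type algebra of a symmetric ``scheme-like'' object; the products $\und C\cdot\und D$ have nonnegative integer entries and row sums determined by valencies, and one shows the skew part must vanish when $r\leq 4$ by a dimension/parameter count (the span of the available skew-symmetric matrices is too small, using that $I_\Omega\in\cJ$ and $J_\Omega\in\cJ$ eat up two dimensions and commute with everything).

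For the equality case $|\cC|=5$, assume $\XX$ is proper. The previous analysis shows $\XX$ must be homogeneous (the non-homogeneous rank-$5$ patterns, being symmetrizations of bi-regular configurations, would be improper — this needs to be checked, but follows the same ``the skew part vanishes'' line). So $\cC=\{1_\Omega,C_1,C_2,C_3,C_4\}$ with all $C_i$ symmetric and regular. Let $\cJ=\bR\langle\und\cC\rangle$, a $5$-dimensional commutative-or-not special Jordan algebra containing $I_\Omega$. The plan is to determine $\cJ$ as an abstract Jordan algebra. Since $\cJ$ is a $5$-dimensional special Jordan algebra with identity, and $J_\Omega\in\cJ$ spans a $1$-dimensional ideal-like piece (as $J_\Omega\cdot\und C=k_C J_\Omega$, a scalar times $J_\Omega$), I would split off $\bR\langle J_\Omega\rangle$ and also the ``all-rows-constant'' directions: concretely, decompose $\cJ$ using the minimal idempotents of its associative envelope $WL(\cJ)=\bR\langle\und\cC\rangle\cdot\bR\langle\und\cC\rangle$. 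Because $\XX$ is proper, $WL(\cJ)\supsetneq\cJ$, so $WL(\cJ)$ is a coherent algebra of dimension $\geq 6$ whose symmetrization is exactly $\cJ$; by Theorem~\ref{020619b} it is the adjacency algebra of a coherent configuration $\XX'$ with $\dim \geq 6$ whose symmetrization gives back the $5$ classes — meaning $\XX'$ has exactly one antisymmetric pair $\{C,C^\top\}$ among its basic relations (two classes fuse to one under symmetrization), so $\dim WL(\cJ)=6$ and $\XX'$ is a (necessarily homogeneous, commutative since rank $\leq 6$ with one antisymmetric pair is small) coherent configuration of rank $6$. Now $WL(\cJ)$, being a commutative semisimple $6$-dimensional algebra over $\bR$, splits as a sum of copies of $\bR$ and $\bR$-irreducible pieces; matching the symmetrization, the Jordan algebra $\cJ$ becomes $\bR\oplus\bR\oplus\Sm_2(\bR)$ — the two $\bR$'s coming from the trivial character and one more real character, and $\Sm_2(\bR)$ being the symmetric part of a $\mathrm{M}_2(\bR)$ (equivalently $\bR\oplus\bR\oplus\bR$-over-$\bR$ collapsing) block. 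I would make this precise by exhibiting the $6$ primitive idempotents of $WL(\cJ)$ explicitly (using that valencies and the degree-$6$ structure constants pin down the character table), then noting $\Sm_2(\bR)$ is $3$-dimensional, $\bR\oplus\bR$ contributes $2$, total $5$, matching $\dim\cJ$.

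The main obstacle I anticipate is the rank lower bound for $r=4$: ruling out a proper \emph{homogeneous} symmetric Jordan scheme of rank $4$ requires showing that every such configuration is the symmetrization of a coherent algebra, i.e. that the hypothetical antisymmetric companion cannot exist. This is essentially a finite case analysis on the possible ``fusion patterns'' of a rank-$5$ homogeneous commutative coherent configuration with one antisymmetric pair, checking that its symmetrization always has rank $\geq$ something incompatible, OR a direct argument that the Jordan structure constants of a rank-$4$ symmetric rainbow already force associativity. I would attack it via the associative envelope: $\dim WL(\cJ)\leq\dim M_\Omega(\F)$ is no help, but $WL(\cJ)$ is generated by $4$ symmetric matrices two of which are $I_\Omega,J_\Omega$, and the symmetrization being all of $\cJ$ of dimension $4$ forces $WL(\cJ)$ to have dimension $4$ or $5$; dimension $4$ means $\cJ=WL(\cJ)$ is already coherent (improper), and dimension $5$ means $WL(\cJ)$ is a rank-$5$ coherent configuration with a single antisymmetric pair, whose symmetrization has rank $4$ — so I must show no rank-$5$ homogeneous coherent configuration of order $\geq 2$ has exactly one antisymmetric pair whose fusion yields a symmetric rank-$4$ scheme that is genuinely ``new'', and in fact the cleanest route is: such a configuration would have an automorphism-free antisymmetric relation, but a rank-$5$ commutative homogeneous CC with one non-symmetric pair $\{C,C^\top\}$ has the three symmetric relations plus $\{1_\Omega\}$ all fixed by $C\mapsto C^\top$, and this ``transpose'' map is an algebraic automorphism of order $2$ — one then shows the quotient/fixed structure is a rank-$4$ symmetric \emph{association scheme}, so the Jordan scheme was improper after all. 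Pinning down that last implication rigorously is where the real work lies; everything else is bookkeeping with \eqref{eq2} and the Schur--Wielandt principle.
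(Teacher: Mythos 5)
There is a genuine gap, and it sits at the center of your argument for both the rank bound and the equality case: you claim that properness of $\XX$ gives $WL(\cJ)\supsetneq\cJ$ \emph{whose symmetrization is exactly $\cJ$}, and you then read off $\dim WL(\cJ)=6$ (rank five) or $\dim WL(\cJ)\in\{4,5\}$ (rank four) with a single antisymmetric pair. But by Proposition~\ref{081219a} and the remark following it, $\widetilde{WL(\cJ)}=\cJ$ is precisely the statement that $\cJ$ is \emph{improper}; properness means $\cJ\subsetneq\widetilde{WL(\cJ)}$. If the symmetrization of the coherent closure returned the five (or four) classes, $\XX$ would be the symmetrization of a coherent configuration and the theorem would be vacuous for it. So there is no a priori bound on $\dim WL(\cJ)$, no "one antisymmetric pair" structure, and the identification $\cJ\cong_J\bR\oplus\bR\oplus\Sm_2(\bR)$ you extract from a putative rank-six commutative closure has no foundation. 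The same circularity undermines your proposed resolution of the rank-four obstacle, which you yourself flag as the hard point: showing a homogeneous symmetric rank-$4$ Jordan scheme is improper cannot start from "the symmetrization of $WL(\cJ)$ is $\cJ$", since that is the conclusion. The paper's Proposition~\ref{050619a} gets this by a spectral argument: either some basic matrix has minimal polynomial of degree $r$, so $\cJ=\bR[A_i]$ and everything commutes, or all basic graphs have at most three eigenvalues, hence are strongly regular (Proposition~\ref{160920a}), and for $r=4$ one invokes the nontrivial theorem of van Dam--Muzychuk that a partition of a complete graph into three SRGs is an association scheme. A dimension count on skew-symmetric parts does not replace that input.

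For the equality case the paper's route is also essentially different from yours and avoids the closure entirely: the non-homogeneous case is excluded by the fiber analysis of Proposition~\ref{090319c}; if $(\cJ,\star)$ were associative, Proposition~\ref{120619a} makes $\XX$ a symmetric association scheme, contradicting properness; then, using $E_0=|\Omega|^{-1}J_\Omega$ and $\cJ=\cJ\star E_0\oplus\cJ\star(I_\Omega-E_0)$, the four-dimensional formally real Jordan algebra $\cJ_1=\cJ\star(I_\Omega-E_0)$ is classified by the Jordan--von Neumann--Wigner theorem; the surviving possibilities other than $\bR\oplus\Sm_2(\bR)$ (namely a rank-one spin factor with $\dim V=3$ and $H_2(\C)$) are killed because there every element is quadratic, so every union of nontrivial basic relations would be strongly regular, and van Dam--Muzychuk again forces commutativity, hence associativity, a contradiction. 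Your proposal never engages this classification, so even setting aside the closure error, the step from "$5$-dimensional special Jordan algebra" to the specific isomorphism type $\bR\oplus\bR\oplus\Sm_2(\bR)$ is missing.
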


\textcolor{black}{To prove the theorem we need to remind some properties of CCs of small rank. Recall that any CC of rank at most two is trivial. Thus a non-trivial CC should be of rank three at  least. The properties of those configurations needed in the paper are collected in the subsection below. The standard facts about association schemes which are used here and later on can be found in the monographs~\cite{BI,Z}.
\subsection{Coherent configurations of rank three}\label{r3}
A classical source for the most material presented here is \cite{H75}{,\ Section 12.}}

\textcolor{black}{Let $\XX=(\Omega,\CC)$ be a  CC of rank three, i.e. $|\CC|=3$. According to \cite{H75}
the configuration $\XX$ is homogeneous and commutative, i.e. it is a rank three association scheme. Thus $\CC=\{1_\Omega,C,D\}$ where $1_\Omega,C,D$ form a  partition of $\Omega^2$ .  There are two possibilities: either $C^\top = C,D^\top=D$ or $C^\top = D, D^\top =C$. In the first case all basic relations are symmetric while in the second one all non-reflexive relations are antisymmetric. We refer to those cases as symmetric and anti-symmetric, respectively.}

\textcolor{black}{ In the symmetric case the basic graphs $\Gamma=(\Omega,C)$ and $\overline{\Gamma}=(\Omega,D)$ form a pair of complementary undirected 
 graphs. Moreover, each of these graphs is {\it strongly regular} (an SRG for short), that is it is regular and any pair of graph nodes $\alpha\neq\beta$ has either $\lambda$ or $\mu$ common neighbors depending on whether the nodes are adjacent  
in the graph or not.  The numbers of the quadruple  
$(|\Omega|,k_C,\lambda,\mu)$ are called the {\it  parameters} of the SRG. 
}

\textcolor{black}{
Thus a symmetric association scheme of rank three determines a complementary pair of SRGs. The converse is almost true: any 
complementary pair of SRGs, besides the one consisting of complete and empty graphs, yields a symmetric scheme of rank three.
The following characterization of SRGs (\cite{IS}{, Lemma 7.2.9}) will be needed in what follows.
\begin{prop}\label{160920a} An undirected regular graph is strongly regular if and only if it has at most three eigenvalues.
\end{prop}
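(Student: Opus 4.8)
The plan is to run everything through the identity satisfied by the adjacency matrix $A:=\und{S}$ of a regular graph $\Gamma=(\Omega,S)$: namely, $\Gamma$ is strongly regular precisely when $A^{2}$ lies in the span of $I_\Omega$, $A$ and $J_\Omega$, and — since $A$ is symmetric, hence diagonalisable — this span condition is controlled by the minimal polynomial of $A$, whose degree equals the number of distinct eigenvalues of $A$. So the two implications amount to translating a degree-$\le 3$ polynomial identity for $A$ into constancy of intersection numbers, and back.

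For "$\Rightarrow$", suppose $\Gamma$ is strongly regular with parameters $(n,k,\lambda,\mu)$. Evaluating $(A^{2})_{\alpha,\beta}=|S(\alpha)\cap S(\beta)|$ (Proposition~\ref{090920a}) gives the classical identity $A^{2}=kI_\Omega+\lambda A+\mu(J_\Omega-I_\Omega-A)$, which we rewrite as $A^{2}-(\lambda-\mu)A-(k-\mu)I_\Omega=\mu J_\Omega$. Since $\Gamma$ is regular, $A$ and $J_\Omega$ commute, the all-ones vector $\mathbf 1$ is a common eigenvector, and $V:=\mathbf 1^{\perp}$ is invariant under both; on $V$ the operator $J_\Omega$ vanishes, so the restriction $A|_{V}$ is annihilated by the quadratic $x^{2}-(\lambda-\mu)x-(k-\mu)$ and therefore has at most two distinct eigenvalues. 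Adjoining the eigenvalue $k$ carried by $\mathbf 1$, $A$ has at most three distinct eigenvalues.

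For "$\Leftarrow$", let $\Gamma$ be $k$-regular, and let $p(x)=\prod_{\theta}(x-\theta)$, the product over the distinct eigenvalues $\theta$ of $A$, be its (square-free) minimal polynomial; then $\deg p\le 3$ and $(x-k)\mid p(x)$. If $\deg p\le 2$, comparing the diagonal entries of $p(A)=0$ (using $(A^{2})_{\alpha,\alpha}=k$ and $A_{\alpha,\alpha}=0$) forces $p(x)=x$, in which case $A=0$ and $\Gamma$ is the empty graph, or $p(x)=(x-k)(x+1)$, in which case $A^{2}=(k-1)A+kI_\Omega$ and comparing the off-diagonal entries shows $\Gamma$ is strongly regular with $\lambda=k-1$, $\mu=0$; either way we are done. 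If $\deg p=3$, assume in addition that $\Gamma$ is connected, so that $k$ is a simple eigenvalue of $A$ with eigenspace $\langle\mathbf 1\rangle$, and write $p(x)=(x-k)q(x)$ with $q$ monic quadratic. From $(A-kI_\Omega)q(A)=0$ each column of the symmetric matrix $M:=q(A)$ lies in $\langle\mathbf 1\rangle$, which forces $M=cJ_\Omega$ for the scalar $c=q(k)/n\ne 0$; hence $A^{2}=uA+vI_\Omega+cJ_\Omega$ for suitable scalars $u,v$. Reading off the $(\alpha,\beta)$-entry of this last identity for $\alpha=\beta$, for adjacent $\alpha,\beta$, and for non-adjacent $\alpha\ne\beta$, and recalling that $(A^{2})_{\alpha,\beta}$ counts common neighbours, we get that two distinct vertices have $u+c$ common neighbours if adjacent and $c$ if not; thus $\Gamma$ is strongly regular.

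The single delicate point is the step "$M=cJ_\Omega$", which uses that the $k$-eigenspace of $A$ is one-dimensional, i.e. that $\Gamma$ is connected; this hypothesis cannot be dropped, since the disjoint union of two $5$-cycles is regular with exactly three eigenvalues yet is not strongly regular, so "connected" has to be read into the statement (as it is in the source cited). Everything else is bookkeeping: matching the entries of a degree-$\le 3$ matrix identity to intersection numbers, in the same vein as the rank-three coherent-configuration facts recalled just above.
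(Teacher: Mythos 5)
Your proof is correct, but note that the paper offers no proof of this proposition to compare against: it is imported from \cite{IS} (Lemma 7.2.9) as a known fact. Your argument is the standard self-contained one -- encode strong regularity as $\und{S}^{2}\in\langle I_\Omega,\und{S},J_\Omega\rangle$ and control this through the minimal polynomial -- and the details check out: the ``only if'' direction via the action of the quadratic on $\mathbf{1}^{\perp}$, the degree $\le 2$ cases yielding the empty graph or a disjoint union of equal cliques (both strongly regular under the paper's convention, which allows $\mu=0$), and the degree $3$ case via $q(\und{S})=cJ_\Omega$, where symmetry of $q(\und{S})$ together with the one-dimensionality of the $k$-eigenspace forces the constant matrix. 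The point you isolate at the end is genuine and worth stressing: the implication ``three distinct eigenvalues $\Rightarrow$ strongly regular'' really does require the $k$-eigenspace to be $\langle\mathbf{1}\rangle$, i.e.\ connectivity, and your example $2C_5$ (regular, exactly three eigenvalues, not strongly regular) shows the proposition as literally stated fails without it; only the converse direction and the at-most-two-eigenvalue cases survive for disconnected graphs. So the statement should be read with ``connected'' added in the three-eigenvalue direction, a refinement that is also relevant to where the paper uses the proposition (in the proof of Proposition~\ref{050619a} it is applied to basic graphs of a homogeneous rainbow, which are regular but not a priori connected, so strictly speaking an extra remark is needed there as well). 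Apart from this caveat -- which concerns the formulation rather than your reasoning -- your write-up is complete and could stand in place of the citation.
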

}

\textcolor{black}{In the antisymmetric case both basic graphs 
$\Gamma=(\Omega,C)$ and $\overline{\Gamma}=(\Omega,D=C^\top)$ are orientations of a complete graph, i.e. they form a complementary pair of tournaments. Such tournaments have an additional property: they are {\it doubly regular}, \cite{RB}. In this case $k_C=k_D$ is an odd number \cite{H75} and $|\Omega|=2k_C+1$ is equal $3$ modulo $4$. }
\textcolor{black}{\subsection{Proof of Theorem~\ref{120319a}}}

\textcolor{black}{We split the proof into a sequence of separate statements. The first one deals with the homogeneous case.} 
\begin{prop}\label{050619a} Assume that \textcolor{black}{$\XX=(\Omega,\CC)$ is a homogeneous symmetric rainbow of rank at most $4$.} Then  
$\XX$ is a Jordan scheme if and only if $\XX$ is a commutative association scheme. 
\end{prop}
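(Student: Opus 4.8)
The plan is to prove both implications. The reverse one is formal: if $\XX$ is a commutative association scheme then, being symmetric, its adjacency algebra $\cA:=\bR\langle\und\CC\rangle$ consists of symmetric matrices and is $\cdot$-closed, hence $\star$-closed, and with $I_\Omega,J_\Omega\in\cA$ and closure under $\circ,{}^\top$ it is a coherent J-algebra, so $\XX$ is a (homogeneous) Jordan configuration, i.e.\ a Jordan scheme. For the forward direction let $\cA:=\bR\langle\und\CC\rangle$, a coherent J-algebra by Theorem~\ref{040619a}; all its elements are symmetric and of constant diagonal (as $1_\Omega\in\CC$), and all basic graphs are regular (Proposition~\ref{040619b}). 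Since $\cA$ is symmetric, it is $\cdot$-closed iff it is commutative (if $AB\in\cA$ then $AB=(AB)^\top=BA$), and a commutative coherent algebra is an association scheme's adjacency algebra by Theorem~\ref{020619b}; so it suffices to prove $\cA$ commutative when $r:=|\CC|\le4$. For $r\le2$ the rainbow is trivial, and for $r=3$, with $\CC=\{1_\Omega,R,S\}$, the relation $\und R^2=\und R\star\und R\in\cA=\bR\langle I_\Omega,\und R,J_\Omega\rangle$ makes $\cA$ commutative because $\und RJ_\Omega=J_\Omega\und R=k_RJ_\Omega$.

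The case $r=4$, $\CC=\{1_\Omega,R,S,T\}$ with $\und R+\und S+\und T=J_\Omega-I_\Omega$, is the crux; I would argue by contradiction, assuming $\cA$ non-commutative and, after relabelling, $[\und R,\und S]\neq0$. Restrict all matrices of $\cA$ to the subspace $W:=\ker J_\Omega$ of dimension $|\Omega|-1$ (the orthogonal complement of the all-ones vector), which is $\cA$-invariant because elements of $\cA$ have constant row and column sums; write $X':=X|_W$. This kills $J_\Omega$ and carries $\cA$ onto a three-dimensional $\star$-closed algebra $\cA'$ of symmetric operators on $W$ with $[\und R',\und S']\neq0$ and $\und S'+\und T'=-I'-\und R'$. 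Expanding $\und R^2=\und R\star\und R$ in the basis $\{I_\Omega,\und R,\und S,\und T\}$ and restricting to $W$, one sees that if the $\und S$- and $\und T$-coefficients differed then $\und S'$ would be a polynomial in $\und R'$, contradicting $[\und R',\und S']\neq0$; hence $\und R^2\in\bR\langle I_\Omega,\und R,J_\Omega\rangle$ and likewise $\und S^2\in\bR\langle I_\Omega,\und S,J_\Omega\rangle$. Thus $\und R'$ satisfies a monic quadratic, and — since $\und R$ is a $0/1$-matrix with $0<k_R<|\Omega|-1$, so not scalar on $W$ — it has exactly two eigenvalues (equivalently, $(\Omega,R)$ is strongly regular, by Proposition~\ref{160920a}).

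Now let $P\in\cA'$ be the orthogonal projection onto one eigenspace of $\und R'$, so $\cA'=\bR\langle I',P,\und S'\rangle$, and take the Peirce decomposition $\cA'=\cA'_1(P)\oplus\cA'_{1/2}(P)\oplus\cA'_0(P)$: in block form relative to $\mathrm{Im}\,P\oplus(\mathrm{Im}\,P)^{\perp}$ these are the matrices supported on the first diagonal block, the block-antidiagonal matrices, and the matrices supported on the second diagonal block. If the middle piece vanished, $\cA'$ would split over complementary blocks and be commutative, contradicting $[\und R',\und S']\neq0$; hence all three pieces are one-dimensional, $\cA'_{1/2}(P)=\bR u$ with $u\neq0$ block-antidiagonal (rectangular block $B$), and $u^2=u\star u\in\cA'$ being block-diagonal forces $BB^\top$ and $B^\top B$ to be nonzero scalar matrices, so $B$ has full row and full column rank. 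Therefore the two eigenvalues of $\und R'$ have equal multiplicity $\tfrac{|\Omega|-1}{2}$.

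The contradiction then follows quickly: from $\mathrm{tr}\,\und R=0$ and the eigenvalue $k_R$ of $\und R$ on the all-ones vector, the two eigenvalues of $\und R'$ sum to $-\tfrac{2k_R}{|\Omega|-1}$, while writing $\und R^2=\alpha I_\Omega+\beta\und R+\gamma J_\Omega$ and reading off the nonnegative-integer entries of $\und R^2$ shows this sum equals $\beta\in\Z$; hence $(|\Omega|-1)\mid 2k_R$, and $0<k_R<|\Omega|-1$ forces $k_R=\tfrac{|\Omega|-1}{2}$. Exchanging the roles of $R$ and $S$ gives $k_S=\tfrac{|\Omega|-1}{2}$ as well, so $k_T=(|\Omega|-1)-k_R-k_S=0$, i.e.\ $T=\emptyset$ — impossible for a basic relation. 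Hence $\cA$ is commutative and $\XX$ is a commutative association scheme. The main obstacle is exactly this rank-$4$ analysis — extracting from $\star$-closedness alone that a non-commutative $\cA'$ must be of ``spin type'' with the two eigenvalue multiplicities equal (the Peirce step), after which the valency count closes the case — whereas both easy implications and the ranks $\le3$ are routine.
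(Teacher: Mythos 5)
Your proof is correct, but it follows a genuinely different route from the paper's in the only hard case, $|\CC|=4$. The paper argues via minimal polynomials: either some basic matrix has minimal polynomial of degree $r$, in which case the whole algebra is $\bR[A_i]$ and hence commutative, or every basic graph has at most three eigenvalues, hence is strongly regular, and then the rank-$4$ case is settled by citing van Dam--Muzychuk (\cite{VDM}, Theorem 2): a partition of the complete graph into three strongly regular graphs is an association scheme. You instead assume $[\und{R},\und{S}]\neq 0$, restrict to $\mathbf{1}^{\perp}$, deduce $\und{R}^2\in\bR\langle I_\Omega,\und{R},J_\Omega\rangle$ (and likewise for $S$) from the commutator hypothesis, and then use the Peirce decomposition of the three-dimensional restricted Jordan algebra with respect to the spectral idempotent of $\und{R}'$ to force equal eigenvalue multiplicities, whence $k_R=k_S=\frac{|\Omega|-1}{2}$ and the impossible $k_T=0$. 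What your approach buys is self-containedness: no appeal to the nontrivial external theorem on decompositions into three SRGs, only regularity of basic graphs (Proposition~\ref{040619b}), the Schur symmetry trick, and the standard Peirce decomposition of a Jordan algebra at an idempotent — the one piece of Jordan theory you invoke without proof, though it can be verified by hand here via $P x P = 2P\star(P\star x)-P\star x$, which shows each block component of an element of $\cA'$ again lies in $\cA'$. Your argument is in spirit closer to the ``quadratic element/spin factor'' analysis the paper reserves for the rank-five Theorem~\ref{120319a}, whereas the paper's proof of this proposition is shorter at the price of the citation; conversely, the paper's minimal-polynomial dichotomy also yields as a by-product that all three nontrivial basic graphs are strongly regular, while you only need (and only establish) this for the two non-commuting ones.
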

\begin{proof} Denote $\cJ:=\bR\langle\und{\CC}\rangle$.
If $\XX$ is an association scheme, then $\cJ$ is commutative \textcolor{black}{because it consists of symmetric matrices.} 
Therefore, the $\star$-product coincides with the usual one. In this case $\XX$ is a Jordan scheme.  

To prove the converse implication let us write $\cC=\{C_1=1_\Omega,..., C_r\}, r\leq 4$ and denote $A_i:=\und{C_i}$. 
Then  $\cJ = \langle A_1,...,A_r\rangle$ \textcolor{black}{is a Jordan algebra}. It follows from 
$A_i^k = A_i^{\star k} \in\cJ,k\in\Z_{\geq 0}$ that the minimal polynomial of each $A_i$ has degree at most $r$. If this bound is reached for some $i$, then $\cJ=\bR[A_i]$ implying that the matrices $A_1,...,A_r$ pairwise commute. Therefore $A_i\star A_j = A_i\cdot A_j$ and  $\XX$ is a commutative \as.  

If the minimal polynomial of every $A_i$ has degree strictly less than $r \leq 4$, then each basic graph $\Gamma_i = (\Omega, C_i)$ has at most $3$ eigenvalues implying that every $\Gamma_i$ is a strongly regular graph \textcolor{black}{ by Proposition~\ref{160920a}. If $r=1,2$ then $\XX$ is a trivial scheme. If $r=3$, then $\Gamma_2$ and 
$\Gamma_3$ form a complementary pair of SRGs, and, therefore $\CC=\{1_\Omega,C_2,C_3\}$ is a rank three association scheme (see subsection~\ref{r3}).} If $r=4$, then we obtain a partition of a complete graph $K_\Omega$ into a disjoint union of three strongly regular graphs. According to \cite{VDM}{, Theorem 2}, $\XX$ is an association scheme. 
\end{proof}

The statement below shows that a non-homogeneous case cannot appear in Theorem~\ref{120319a}.
\begin{prop}\label{090319c} \textcolor{black}{If $\XX=(\Omega,\CC)$ is a non-homogeneous symmetric Jordan configuration of rank at most five}, then $\XX$ is a symmetrization of a direct sum of two  homogeneous CCs\footnote{\textcolor{black}{For the definition of a direct sum of CCs we refer the reader to \cite{CP}{,\ Section 3.2.1}.}}.
\end{prop}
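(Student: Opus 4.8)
The plan is to exploit Proposition~\ref{240719a}, which already gives us a canonical bi-partition of a non-homogeneous Jordan configuration with one fiber. First I would reduce to the one-fiber case: by Proposition~\ref{040619b}(a) the point set splits into fibers $\Omega_1,\dots,\Omega_f$, each carrying a homogeneous symmetric constituent $\CC^{ii}$ of rank at most five; a symmetric homogeneous JC of rank $\le 4$ is a commutative association scheme by Proposition~\ref{050619a}, so the only way to be genuinely non-homogeneous while keeping the total rank $\le 5$ is to have exactly two fibers, each homogeneous, with very small constituents, or to have a single fiber that fails to be homogeneous. The two-fibre situation is essentially a direct sum already (the cross relations $\CC^{12}$ are forced to be regular bipartite blocks once the diagonal constituents are fixed, by Proposition~\ref{040619b}(b) together with a rank count), so the substantive case is a single non-homogeneous fiber $\Omega$.

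So assume $\Omega$ is a single fiber. Proposition~\ref{240719a} supplies a bi-partition $\Omega=\Omega_0\cup\Omega_1$ with $|\Omega_0|=|\Omega_1|$ such that every non-regular basic relation is ``bi-regular'', contained in $\Omega_0\times\Omega_1$ or $\Omega_1\times\Omega_0$; moreover the proof there shows $\und{\Omega_0\times\Omega_1},\und{\Omega_1\times\Omega_0}\in\cA$ and $1_{\Omega_0},1_{\Omega_1}\notin\CC^\cup$. Next I would analyse the induced colorings on $\Omega_0^2$ and on $\Omega_0\times\Omega_1$. The relations of $\CC$ meeting $\Omega_0^2$ are regular (being symmetric and not bi-regular), so they restrict to a homogeneous symmetric JC on $\Omega_0$; by Proposition~\ref{050619a} (its rank is at most $4$ since at least one color lives off $\Omega_0^2$) this restriction, call it $\CC_0$, is a commutative association scheme, and likewise $\CC_1$ on $\Omega_1$. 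A rank bookkeeping then shows $|\CC_0|+|\CC_1|+(\text{number of colors inside }\Omega_0\times\Omega_1\cup\Omega_1\times\Omega_0)\le 5$, with the off-diagonal part contributing at least $1$ (it is nonempty and symmetric, so it is a single color or a pair $C,C^\top$; symmetry of $\XX$ forces it to be one symmetric relation, hence contributes $1$ to the count on each side — careful: symmetric here means $C=C^\top$, but $C\subseteq\Omega_0\times\Omega_1$ forces $C^\top\subseteq\Omega_1\times\Omega_0$, contradiction unless we also have the mirror color, so actually the off-diagonal part is a pair of mutually transposed colors and contributes $2$). Thus $|\CC_0|+|\CC_1|\le 3$, forcing $|\CC_0|=|\CC_1|=1$, i.e.\ $\CC_0=\{1_{\Omega_0}\}$, $\CC_1=\{1_{\Omega_1}\}$, so $|\Omega_0|=|\Omega_1|=1$ — but then $\Omega$ has two elements and a two-element fiber is homogeneous, a contradiction. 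Hence a single non-homogeneous fiber is impossible, and $\XX$ has $f\ge 2$ homogeneous fibers.

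With $f\ge 2$ fibers, each constituent $(\Omega_i,\CC^{ii})$ is a homogeneous symmetric JC; if its rank is $\le 4$ it is a commutative AS by Proposition~\ref{050619a}, and the rank budget forbids any constituent of rank $5$ unless $f=1$. So every constituent is a (symmetric) association scheme $\XX_i$. It remains to see that the off-diagonal colors assemble $\XX$ into the symmetrization of the direct sum $\XX_1\oplus\cdots\oplus\XX_f$: the cross blocks $\CC^{ij}$ ($i\ne j$) must each reduce to the single pair $\{\Omega_i\times\Omega_j,\Omega_j\times\Omega_i\}$ by the rank count ($2+2+\cdots$ cross colors plus $\sum|\CC^{ii}|$ must not exceed $5$), which is exactly the cross-block structure of a direct sum of CCs, and the symmetrization of that direct sum has precisely the color set $\{C\cup C^\top : C\in\CC^{ii}\}_i\cup\{\Omega_i\times\Omega_j\cup\Omega_j\times\Omega_i\}_{i<j}$ — which coincides with $\CC$. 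This verifies the claim.

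The main obstacle is the careful rank accounting in the single-fiber case: one has to be precise about how many colors the off-diagonal region $\Omega_0\times\Omega_1\cup\Omega_1\times\Omega_0$ must contribute (at least two, because a symmetric basic relation cannot be wholly contained in $\Omega_0\times\Omega_1$, so off-diagonal colors come in transposed pairs), and to rule out the degenerate possibility $|\Omega_0|=|\Omega_1|=1$ cleanly. Everything else is a routine combination of Propositions~\ref{040619b}, \ref{240719a} and \ref{050619a}.
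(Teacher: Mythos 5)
Your proposal misreads the hypothesis, and as a result the one step that actually carries the proposition is not established. In this paper ``homogeneous'' means $1_\Omega\in\CC$, which is equivalent to having exactly one fiber; hence a ``single fiber that fails to be homogeneous'' cannot occur, and the long middle part of your argument (built on Proposition~\ref{240719a} — which really concerns regularity rather than homogeneity — and on restricting $\CC$ to one half $\Omega_0$ of a fiber, a set that is not a union of fibers, so no Jordan structure on it is guaranteed) treats a vacuous case. Non-homogeneity simply means $f\ge 2$ fibers, and the count $\sum_{a\le b}|\CC^{ab}|\le 5$ with $|\CC^{aa}|\ge 1$ and $|\CC^{ab}|\ge 1$ already forces $f=2$; that part is harmless.

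The genuine gap is in your cross-block analysis. You count off-diagonal colors as transposed pairs contributing $2$ each; but $\XX$ is symmetric, so every basic relation satisfies $C=C^\top$, and a cross color is a single symmetric relation contained in $\Omega_1\times\Omega_2\cup\Omega_2\times\Omega_1$ (meeting both rectangles), contributing $1$ to the rank. With the correct count, the rank budget alone does not force $|\CC^{12}|=1$: for example the pattern $|\CC^{11}|=1$ (a singleton fiber), $|\CC^{22}|=2$, $|\CC^{12}|=2$ fits inside rank five and is not excluded by your argument, yet such a configuration would not be the symmetrization of a direct sum. The missing ingredient, which is exactly what the paper's proof supplies, is the bound $|\CC^{12}|\le\min(|\Omega_1|,|\Omega_2|)$: by Proposition~\ref{040619b}(b) each $C\in\CC^{12}$ is bi-regular with nonzero valencies, so for a fixed $\omega\in\Omega_1$ the nonempty sets $C(\omega)$, $C\in\CC^{12}$, partition $\Omega_2$. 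Consequently $|\CC^{12}|\ge 2$ would force $|\Omega_1|,|\Omega_2|\ge 2$, hence $|\CC^{11}|,|\CC^{22}|\ge 2$ and total rank at least six; therefore $|\CC^{12}|=1$, the cross block is the full relation $\Omega_1\times\Omega_2\cup\Omega_2\times\Omega_1$, each constituent is a symmetric association scheme of rank at most four by Proposition~\ref{050619a}, and $\XX$ is the symmetrization of the direct sum of the two constituents. Without this valency-based bound on $|\CC^{12}|$ your proof does not go through.
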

\begin{proof} According to Proposition~\ref{040619b} there exists 
a fiber decomposition $\Omega = \Omega_1\cup ...\cup \Omega_f$. 
Then $\cC = \bigcup_{1\leq a\leq b\leq f} \cC^{ab}$ implying that $|\cC|=
\sum_{1\leq a\leq b\leq f} |\cC^{ab}|$. 
Together with 
$|\cC^{aa}|\geq \min(2,|\Omega_a|)$ 
and $|\cC^{ab}|\geq 1$ we conclude that $|\cC| \geq 6$ if $f\geq 3$.
Now the assumption $|\cC|\leq 5$ yields us $f=1,2$. Since $\XX$ is non-homogeneous, $f=2$.

Every basic graph $(C,\Omega),C\in\CC^{12}$ is undirected, bi-partite and bi-regular (Proposition~\ref{040619b}, part (b)). \textcolor{black}{Therefore $C(\omega)\neq\emptyset$ for any $\omega\in\Omega$. Pick an arbitrary $\omega\in\Omega_i$. Then $\{C(\omega)\}_{C\in \CC^{12}}$ is a partition of $\Omega_{3-i}$. Therefore $|\CC^{12}|\leq|\Omega_{3-i}|$ implying} $|\CC^{12}|\leq \min(|\Omega_1|,|\Omega_2|)$. 

If $|\cC^{12}|\geq 2$, then $|\Omega_i|\geq 2,i=1,2$, and, therefore, $|\cC^{11}|,|\cC^{22}|\geq 2$ contrary to the assumption $|\cC|\leq 5$. Hence, $|\cC^{12}|=1$ and $|\cC^{11}|+|\cC^{22}|\leq 4$.

Since $\XX_i:=(\Omega_i,\cC^{ii}),i=1,2$ is a homogeneous symmetric Jordan configuration of rank at most $4$, it is a symmetric association scheme by Proposition~\ref{050619a}. Combining this with $|\cC^{12}|=1$ we conclude that $\cC$ is a symmetrization of the rainbow
$\cC^{11}\cup \cC^{22}\cup\{\Omega_1\times\Omega_2,\Omega_2\times\Omega_1\}$. The latter partition is a direct sum of the association schemes $\XX_1$ and $\XX_2$.  
\end{proof}
\noindent{\bf Remark.} A more delicate analysis shows that if $|\Omega_i| > 1,i=1,2$, then $\XX_1$ and $\XX_2$ are trivial schemes
and $|\cC|=5$. 

Now we are ready to prove the main result of the subsection.

\noindent {\bf Proof of Theorem~\ref{120319a}.}\\
 Assume that $|\cC|\leq 5$. By Proposition~\ref{090319c} if $\XX$ is non-homogeneous, then it is non-proper. Therefore $\XX$ is homogeneous. If $|\cC|\leq 4$, then by Proposition~\ref{050619a} 
$\XX$ is a symmetric association scheme contrary to being a proper Jordan scheme. Thus we may assume that $|\cC|=5$. Set $\cJ:={\mathbb R}\langle \und{\CC}\rangle$.

If $(\cJ,\star)$ is associative, then Proposition~\ref{120619a} (see the Appendix) implies that $\XX$ is a symmetric association scheme. Thus in this case $\XX$ is an improper JC. So, we may assume that $(\cJ,\star)$ is not associative. 

 Since $\XX$ is homogeneous, the idempotent $E_0:=|\Omega|^{-1} J_\Omega$ satisfies $E_0\star \cJ = \langle E_0\rangle$ (Proposition~\ref{040619c}). Thus $\cJ$ has a direct sum decomposition $\cJ = \cJ\star E_0 + \cJ \star (I_\Omega - E_0)$. The Jordan subalgebra $\cJ_1:=\cJ \star (I_\Omega - E_0)$ has dimension $4$. Its unit coincides with $E_1: = I_\Omega - E_0$.

Since $\cJ$ is {\it formally real} (\cite{J}), its subalgebra  $\cJ_1$ is formally real too, and, by  Jordan-von Neumann-Wigner Theorem, $\cJ_1$ is a direct sum of simple Jordan algebras from the following list (see \cite{J,M}):
\begin{enumerate}
\item[1)] $\bR$;
\item[2)]the Jordan algebra $\bR\oplus_f V$ of a positive definite symmetric bilinear form $f$ on a real vector space $V,\dim(V)\geq 2$;
\textcolor{black}{
\item[3)] $H_n(\mathbb{D}):=\{A\in M_n(\mathbb{D})\,|\, \overline{A}^\top = A\}, n\geq 2$, where  $\mathbb{D}\cong\bR, \mathbb{C},\mathbb{H}$ and \  $\bar{ {} }$ \ is the standard conjugation in $\mathbb{D}$ (here and later on $\mathbb{H}$ stands for the algebra of quaternions);
\item[4)] $H_3(\mathbb{O}):=\{A\in M_3(\mathbb{O})\,|\, \overline{A}^\top = A\}$ where $\mathbb{O}$ is the algebra of octonians and  \  $\bar{ {} }$ \  is the standard conjugation in $\mathbb{O}$. 
}
\end{enumerate}
If $\cJ_1=\bR^4$, then both $\cJ_1$ and $\cJ$ are associative implying that $\cJ$ is associative, contrary to the assumption.  

Assume now that at least one of the simple summands of $\cJ_1$ is non-associative. Then it has dimension at most $4$. The only algebras in the above list
satisfying this dimension restriction are either of type (2) with $\dim(V)=2,3$ or of types (3)-(4) satisfying 
$4\geq \dim(H_n(\mathbb{D}))=n + \frac{n(n-1)}{2}\dim(\mathbb{D})$ and $n\geq 2$. In the latter case $n=2$ and $\mathbb{D}\cong\mathbb{R},\mathbb{C}$. Thus  $\cJ_1$ is one of the following 
\begin{enumerate}
\item[1)] $\bR\oplus (\bR\oplus_f V), \dim(V)=2$;
\item[2)] $\bR\oplus_f V, \dim(V)=3$;
\item[3)] $\bR\oplus \sym_2(\bR)$; 
\item[4)] $H_2(\mathbb{C})$. 
\end{enumerate} 
The first and the third algebras are isomorphic and in these cases $\cJ\cong_J \bR\oplus\bR\oplus \Sm_2(\bR)$ hereby providing the conclusion of the Theorem. 

It remains to deny the second and the fourth cases. In these cases every element $x\in \cJ_1$ is {\it quadratic}, meaning that $x^{\star 2}$ is a linear combination of $x$ and the identity $E_1$. This implies that any $x\in \cJ$ has minimal polynomial of degree at most three. Therefore, any union of non-identical basic relations of $\cC=\{C_0=I_\Omega, C_1,C_2,C_3,C_4\}$ is a strongly regular graph. This implies that for any permutation $i_1,i_2,i_3,i_4$ of the indices $\{1,2,3,4\}$ the relations $C_{i_1},C_{i_2},C_{i_3}\cup C_{i_4}$ form a partition of a complete graph into a disjoint union of three SRGs. \textcolor{black}{By \cite{VDM}{, Theorem 2}} these relations together with $C_0$ form a symmetric \as\ implying that 
$\und{C}_{i_1},\und{C}_{i_2}$ commute. Thus we have shown that any two basic matrices of $\cJ$ commute implying that $(\cJ,\star)$ is associative. A contradiction. \hfill $\square$

\medskip

\noindent{\bf Remark.} Note that the algebra $H_3(\mathbb{O})$ never appears in a decomposition of a symmetric coherent J-algebra $\cJ$, because it is an exceptional Jordan algebra while any subalgebra of $(M_\Omega(\mathbb{R}),\star)$ is special. We do not know which one of the special algebras appearing in Jordan-von Neumann-Wigner Theorem could appear in the decomposition of a  symmetric coherent J-algebra.

\begin{theorem}\label{100319a} \textcolor{black}{Let $\cJ\subseteq \sym_\Omega({\mathbb R})$ be a $\star$-subalgebra isomorphic to
$\cJ\cong\bR\oplus\bR\oplus \Sm_2(\bR)$. Then the $\cdot$-subalgebra of $M_\Omega({\mathbb R})$  generated by $\cJ$ is isomorphic to $\bR\oplus\bR\oplus M_2(\bR)$.}
\end{theorem}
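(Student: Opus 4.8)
The plan is to use the direct-sum decomposition of $\cJ$ to split the generated associative algebra into three central blocks and to identify each block on its own. Write $\cJ\cong_J\bR\oplus\bR\oplus\Sm_2(\bR)$ and let $e_1,e_2,e_3\in\cJ$ be the images of the units of the three summands; these are symmetric idempotent matrices, pairwise $\star$-orthogonal, and $e:=e_1+e_2+e_3$ is the Jordan unit of $\cJ$. The one point at which I would really use that the matrices in $\cJ$ are symmetric is the following elementary remark: if $p$ is a symmetric idempotent matrix, $x$ a symmetric matrix, and $p\star x=\lambda x$ with $\lambda\in\{0,1\}$, then writing $x$ in its Peirce decomposition relative to $p$, namely $x=pxp+px(I-p)+(I-p)xp+(I-p)x(I-p)$, and comparing the four uniquely determined pieces forces $px=xp=\lambda x$. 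Applied with $p=e$ this shows $e$ is a two-sided identity on $\cJ$, hence on the generated associative algebra $\cA$; applied with $p=e_i$ it shows $e_ix=xe_i$ (equal to the $i$-th component of $x$ in the direct sum) for every $x\in\cJ$, so $e_1,e_2,e_3$ are central idempotents of $\cA$, pairwise $\cdot$-orthogonal, with sum $e$.

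By the standard decomposition along central idempotents, $\cA=\cA_1\oplus\cA_2\oplus\cA_3$ as associative algebras, where $\cA_i:=e_i\cA e_i$ is the subalgebra generated by $e_i\cJ e_i$, which is just the image of the $i$-th summand of $\cJ$. Two of these, corresponding to the $\bR$-summands, are one-dimensional and spanned by $e_1$, resp.\ $e_2$, so $\cA_1\cong\cA_2\cong\bR$ at once, and the whole problem reduces to the third block: one must show that the associative algebra generated inside $M(V)$ by a Jordan subalgebra of $\sym(V)$ that is $J$-isomorphic to $\Sm_2(\bR)$ is isomorphic to $M_2(\bR)$. Fix $u,v$ in that block corresponding to the matrices $\mtrx{1}{0}{0}{-1}$ and $\mtrx{0}{1}{1}{0}$ that generate $\Sm_2(\bR)$ as a Jordan algebra; then $u,v$ are symmetric, linearly independent together with $e_3$ (the Jordan isomorphism being injective), and satisfy $u^2=v^2=e_3$ and $u\star v=0$, i.e.\ $uv=-vu$. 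Since $u^2=e_3$ is the identity of $\cA_3$, one cannot have $uv=0$ (otherwise $v=u^2v=u(uv)=0$), and $uv$ is antisymmetric, hence independent of the symmetric matrices $e_3,u,v$; so $\{e_3,u,v,uv\}$ is linearly independent. A bounded computation using only $u^2=v^2=e_3$ and $uv=-vu$ shows that its span is closed under $\cdot$, so it equals $\cA_3$, which is therefore four-dimensional; and the linear bijection sending $e_3,u,v,uv$ to the basis $I_2,\ \mtrx{1}{0}{0}{-1},\ \mtrx{0}{1}{1}{0},\ \mtrx{0}{1}{-1}{0}$ of $M_2(\bR)$ respects exactly these relations, hence is an algebra isomorphism $\cA_3\cong M_2(\bR)$. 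Reassembling the three blocks gives $\cA\cong\bR\oplus\bR\oplus M_2(\bR)$.

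The step I expect to require the most care is the first one: a priori the only relations one may use are Jordan relations of the shape $\tfrac12(AB+BA)=\dots$, and the whole argument rests on upgrading these — but only for the idempotents $e$ and $e_i$ — to genuine associative identities $e_iA=Ae_i$, which is precisely what the symmetry of the ambient matrices provides through the Peirce decomposition. Once the central idempotents are in hand, the three blocks are routine, the last being the explicit four-dimensional calculation sketched above.
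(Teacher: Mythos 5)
Your proof is correct, and its opening move coincides with the paper's: the paper also first upgrades the Jordan relations involving the three idempotents to genuine associative identities, deriving $E_iA+AE_i=2E_iAE_i$ from $e_i\star x=e_i\star(e_i\star x)$ and concluding $E_iA=AE_i$, so that $E_0,E_1,E_2$ are pairwise orthogonal central idempotents of the generated algebra (your Peirce argument gives exactly this; note, incidentally, that this step uses no symmetry at all, so it is not really "the one point" where symmetry enters -- in your argument symmetry is actually used later, to see that $uv$ is antisymmetric and hence independent of $e_3,u,v$). Where you genuinely diverge is in identifying the third block. The paper does not transport explicit generators: it takes any $A,B$ completing $E_0,E_1,E_2$ to a basis of $\cJ$, notes $[A,B]\neq 0$, proves that the six-dimensional space $\cJ+\langle AB\rangle$ is a $\top$-closed associative algebra (using Seely's identity $ABA\in\cJ$), hence semisimple, and then argues that the four-dimensional noncommutative block $E_2\cA$ must be $\mathbb{H}$ or $M_2(\bR)$, eliminating $\mathbb{H}$ because $\cJ\cap E_2\cA$ is a three-dimensional space of matrices with real spectra. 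You instead push the standard Jordan generators of $\Sm_2(\bR)$ through the isomorphism, obtaining symmetric $u,v$ with $u^2=v^2=e_3$ and $uv=-vu$, and identify the generated block explicitly as the four-dimensional span of $e_3,u,v,uv$, i.e. a Clifford-algebra computation yielding $M_2(\bR)$ directly; this bypasses Seely's lemma, the semisimplicity and classification step, and the quaternion case, and is more elementary and self-contained, while the paper's route has the mild advantage of describing the generated algebra as $\cJ+\langle AB\rangle$ for an arbitrary non-commuting pair rather than for a special choice of generators. One small point of care: your Peirce remark applies verbatim only when $e_i\star x$ is a scalar multiple of $x$, so for a general $x\in\cJ$ you should first split $x$ into its three components and apply the remark to each -- as your parenthetical indicates you intend.
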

\begin{proof} 
It follows from the assumption that $[\cJ,\cJ]\neq 0$ (otherwise, $\cJ\cong_J\bR^5$ contrary to $\cJ\cong\bR\oplus\bR\oplus \Sm_2(\bR)$).

Denote by $\varphi$ an isomorphism from  $\bR\oplus\bR\oplus \Sm_2(\bR)$ onto $\cJ$ and denote $E_0=\varphi(e_0), E_1=\varphi(e_1), E_2=\varphi(e_2)$ where $e_0=(1,0,O_2),e_1=(0,1,O_2),e_2=(0,0,I_2)$.
It follows from $e_i^{\star 2} = e_i$ that $E_i^2 = E_i^{*2}=E_i$. i.e.  $E_i$ are idempotents. It follows from $e_i\star x = e_i\star (e_i\star x)$ that $E_i A + A E_i = 2 E_i A E_i$. Therefore $E_i A = E_i A E_i = A E_i$ holds for all $A\in\cJ$. In other words $E_i$ $\cdot$-commutes with any element $A\in\cJ$. Combining this with $i\neq j\implies e_i\star e_j = 0$ we conclude that $i\neq j \implies E_i E_j = O$. Thus $E_0,E_1,E_2$ are pairwise orthogonal central idempotents of the $\cdot$-subalgebra generated by $\cJ$. 

Pick an arbitrary pair $A,B\in\cJ$  such that the tuple $E_0,E_1,E_2,A,B$ forms a basis of $\cJ$.
If $A$ and $B$ commute, then $[\cJ,\cJ]=0$. A contradiction.
Therefore $[A,B]\neq 0$.

Consider the vector space $\cA=\cJ+\langle AB\rangle$. \textcolor{black}{We are going to prove that $\cA$ is a $\cdot$-subalgebra of the full matrix algebra.} Since $A,B$ are symmetric 
and don't commute, their product $AB$ is not a symmetric matrix. Therefore $AB\not\in\cJ$ and $\dim(\cA)=6$.
It follows from $AB+BA\in\cJ$ that $\cA = \cJ +\langle BA\rangle$. 
\textcolor{black}{Thus $\cJ +\langle AB\rangle = \cA = \cJ +\langle BA\rangle$ implying $\cA^\top = \cA$.}

It follows from $AE_i = A\star E_i\in\cJ,A^2\in\cJ$ and $ABA\in\cJ$ (see (2) in \cite{S}) that \\
$A\cA = A\langle E_0,E_1,E_2,A,B,BA\rangle\subseteq \cJ + \langle AB\rangle=\cA$. Analogously,
$B\cA \subseteq \cA$.  \textcolor{black}{Applying ${}^\top$ to the  inclusions $A\cA\subseteq \cA\supseteq B\cA$ we conclude that $\cA A\subseteq \cA\supseteq \cA B$.}

The inclusion $E_i\cJ\subseteq \cJ$ implies that $E_i\cA \subseteq \cJ +\langle E_i AB\rangle\subseteq \cJ + \cJ B\subseteq \cJ + \langle AB \rangle = \cA$. Thus, we have proven that $\cA$ is a closed with respect to the usual matrix product.

The algebra $\cA$ is \textcolor{black}{${}^\top$-closed, and, therefore, is} a semisimple non-commutative $6$-dimensional algebra with central idempotents $E_0,E_1,E_2$. Since both $E_0\cA$ and  $E_1\cA$ are one-dimensional ideals, the ideal $E_2\cA$ is isomorphic either to $\mathbb{H}$ or $M_2(\bR)$.  To finish the proof we have to eliminate the possibility of $E_2\cA\cong\mathbb{H}$. Assume that this is the case. Then the intersection $\cJ\cap E_2\cA$ has dimension at least three. Since every matrix of $\cJ$ has real eigenvalues, the intersection $\cJ\cap E_2\cA$ is a three-dimensional subspace of $\mathbb{H}$ the elements of which have minimal polynomials with real roots. This is a contradiction, because the minimal polynomial of every non-zero imaginary quaternion has a form $x^2+a, a>0$. 
\end{proof}

We finish this section by a useful sufficient condition for being a Jordan scheme.

\begin{prop}\label{p3} Let $\XX=(\Omega,\cC=\{C_0=1_\Omega,C_1,C_2,C_3,C_4\})$ be a symmetric homogeneous rainbow of $\Omega^2$. Denote $C:=C_2\cup C_3 \cup C_4, A:=\und{C}$. Assume that for any $i\in\{2,3,4\}$ the partition $\cC_i=\{C_0,C_1,C_i,C\setminus C_i\}$ is an \as. Then $\XX$ is  a \js.
\end{prop}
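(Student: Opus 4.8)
The plan is to prove directly that the span $\cJ:=\F\langle\und\CC\rangle$ is closed under the Jordan product $\star$. Since $\cJ$ contains $I_\Omega$ and $J_\Omega$ and, being the span of a rainbow, is closed under $\circ$ and ${}^\top$, this makes $\cJ$ a coherent J-algebra; then by Theorem~\ref{040619a} (and Proposition~\ref{090920a}) the rainbow $\XX$ satisfies \eqref{eq2}, so it is a Jordan configuration, and, being homogeneous, a Jordan scheme. Throughout write $A_i:=\und{C_i}$, so $A_0=I_\Omega$ and $A:=\und C=A_2+A_3+A_4=J_\Omega-I_\Omega-A_1$. For each $i\in\{2,3,4\}$ fix the labelling $\{i,j,k\}=\{2,3,4\}$; then $\und{C\setminus C_i}=A-A_i=A_j+A_k$, so the adjacency algebra of the association scheme $\cC_i$ equals $\F\langle I_\Omega,A_1,A_i,A_j+A_k\rangle$, a $\cdot$-closed subspace contained in $\cJ$. (The structure-constant identities $\und R\cdot\und S=\sum_F p^F_{RS}\und F$ for $\cC_i$ have non-negative integer coefficients, so by Proposition~\ref{090920a} they remain valid over any $\F$; hence the standing hypothesis $\mathsf{char}(\F)\neq2$ suffices.)

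First I would dispose of every product $A_a\star A_b$ in which the unordered pair $\{a,b\}$ meets $\{2,3,4\}$ in at most one index. Products involving $A_0$ are immediate since $I_\Omega\star X=X$. If $a,b\in\{1,i\}$ for some fixed $i\in\{2,3,4\}$, then both $A_a$ and $A_b$ lie in the adjacency algebra of $\cC_i$; as that algebra is $\cdot$-closed, $A_aA_b$ and $A_bA_a$ lie in it, and hence so does $A_a\star A_b$. This already places $A_1^{\star2}$, each $A_1\star A_i$, and each $A_i^{\star2}$ in $\cJ$.

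The only substantive case is $A_i\star A_j$ with $i\neq j$ in $\{2,3,4\}$; write $\{i,j,k\}=\{2,3,4\}$. Here I would observe that $A_i+A_j=\und{C\setminus C_k}$ is a basic matrix of the scheme $\cC_k$, so $(A_i+A_j)^2$ lies in the adjacency algebra of $\cC_k$, hence in $\cJ$. Expanding,
\[
(A_i+A_j)^2=A_i^2+A_j^2+(A_iA_j+A_jA_i)=A_i^{\star2}+A_j^{\star2}+2\,(A_i\star A_j),
\]
and since $A_i^{\star2},A_j^{\star2}\in\cJ$ by the previous step and $\mathsf{char}(\F)\neq2$, we conclude $A_i\star A_j\in\cJ$. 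As $k$ runs over $\{2,3,4\}$ the pair $\{i,j\}$ runs over all two-element subsets of $\{2,3,4\}$, so every remaining product is handled and $\cJ$ is $\star$-closed.

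I do not expect a genuine obstacle here; the only thing to get right is the bookkeeping. Each of the three hypotheses ``$\cC_i$ is an $\as$'' is used twice over: once through the class $C_i$ itself, to place $A_i^{\star2}$ (and $A_1\star A_i$) in $\cJ$, and once through the complementary class $C\setminus C_i=C_j\cup C_k$, to place the single cross term $A_j\star A_k$ in $\cJ$ via the identity above. It is also worth a sentence that one never needs commutativity of the $\cC_i$ — only that their adjacency algebras are $\cdot$-closed — and that, by Proposition~\ref{090920a}, all identities invoked are characteristic-free, so the conclusion holds for every field with $\mathsf{char}(\F)\neq2$ exactly as in the characteristic-zero case.
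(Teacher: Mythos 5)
Your proposal is correct and follows essentially the same route as the paper's proof: both exploit that each $\cA_i=\F\langle I_\Omega,A_1,A_i,A_j+A_k\rangle$ is $\cdot$-closed and reduce the only nontrivial products $A_i\star A_j$, $i\neq j\in\{2,3,4\}$, to the identity $2A_i\star A_j=(A_i+A_j)^2-A_i^2-A_j^2$ with $A_i+A_j$ a basic matrix of the third scheme $\cC_k$. The only cosmetic difference is that the paper applies this polarization identity uniformly (including the pairs involving $A_1$), whereas you handle those pairs directly via $\cdot$-closure of $\cA_i$; your added remarks on characteristic-freeness are fine but not needed.
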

\begin{proof}
Denote by $A_i$ the adjacency matrix of $C_i$ and by $\cA$ the linear span of $A_0,...,A_4$. 
By assumption the vector space $\cA_i:=\langle A_0, A_1, A_i, A - A_i\rangle, i\in\{2,3,4\}$ is the adjacency algebra of the \as\ $\cC_i$.

Since $A_i\star A_i = A_i^2\in\cA_i\subseteq \cA$, one has to prove that the inclusion $A_j\star A_k\in\cA$ holds if $k\neq j$. It follows from $2A_k\star A_j = (A_j+A_k)^2 - A_k^2 - A_j^2$ that it is sufficient to show that $(A_k + A_j)^2\in\cA$ holds for all $k\neq j\in\{1,2,3,4\}$.

\medskip

\noindent {\bf Case 1.} One of $k,j$ equals $1$. W.l.o.g. we may assume that $k=1$. Now
 $j\neq 1$ and then $(A_1 + A_j)^2\in\cA_j\subseteq\cA$. 

\medskip

\noindent {\bf Case 2.} Both $k$ and $j$ are distinct from $1$.

Note that in this case $k,j\in\{2,3,4\}$.
It follows from $k\neq j$ that $A_k+A_j\in\cA_\ell$ where $\ell$ is the unique element of $\{2,3,4\}\setminus\{k,j\}$. Hence $(A_k+A_j)^2\in\cA_\ell\subseteq \cA$.
\end{proof}
{\bf Remark.} It follows from the assumptions of the above proposition that \textcolor{black}{$(\Omega, \{1_\Omega,C_1,C\})$ is a symmetric CC of rank three. Hence $(\Omega,C_1)$} is a strongly regular graph.

\section{A prolific construction of rank five Jordan schemes based on the WFDF-construction}\label{WFDF}

In this section we provide an infinite series of examples based on a prolific construction of SRGs proposed by Wallis and Fon-Der-Flaass \cite{W},\cite{FDF} ({\it WFDF-construction} in brief). The construction is called prolific because it produces hyperexponentially many non-isomorphic SRGs sharing the same parameter set. Note that our presentation of this construction is a bit different from \cite{W,FDF}.

We start with an affine design with point set $V=\Z_3^d$.
The blocks of the design are the affine hyperplanes of the vector space 
$V$. There are exactly $r:=\frac{3^d-1}{2}$ hyperplanes going through
zero. We denote them as $H_1,...,H_r$ where labelling is arbitrary.

For each hyperplane $H_i$ we pick 
an arbitrary linear epimorphism $\pi_i:V\rightarrow\Z_3$ with $\ker(\pi_i)=H_i$ (there are two choices of $\pi_i$ for each $i$).
  
We are going to build a rank five \js\  on the set $\Omega:=V\times\{0,1,...,r\}$. In the provided construction each color class will be a strongly regular graph. 

For the rest of the section we use the following abbreviations:
 $[0,r]:=\{0,1,...,r\}, \Omega_i = V\times\{i\}, v_i:=(v,i)$. 

The first relation of our scheme, called $S$, \textcolor{black}{comes from the equivalence relation on $\Omega$ corresponding to the partition $\Omega = \Omega_0\cup ... \cup \Omega_r$. It is defined  as follows}
$S=\{(u_i,v_j)\,|\, i=j \land u\neq v\}$. The basic graph 
corresponding to $S$ is a disjoint union of $r+1$ copies of a complete graph $K_{3^d}$. 

Three other basic graphs will be strongly regular with parameters\\ $\left(3^d\frac{3^d+1}{2},3^{d-1}\frac{3^d-1}{2},3^{d-1}\frac{3^{d-1}-1}{2},3^{d-1}\frac{3^{d-1}-1}{2}\right)$. 
Altogether we obtain a partition of the complete graph $K_{\Omega}$ into a disjoint union of four strongly regular graphs one of which is disconnected. To build the connected basic graphs we define on the set $[0,r]$ an arbitrary binary operation $\diamond$ subject to two conditions 
\begin{equation}\label{eq1}
\begin{array}{l}
\forall_{a\in [0,r]} \ \ a\diamond a = 0;\\
\forall_{a\in [0,r]}\mbox{ the mapping }x\mapsto a\diamond x,x\in [0,r]\mbox{ is a bijection}.
\end{array}
\end{equation}
One can easily count that the number of such operations is $(r!)^{r+1}$. One of the choices is $x\diamond y = x - y$ where the subtraction is done modulo $r+1$.

Now, for every ordered pair $(i,j), 0\leq i < j \leq r$ pick an arbitrary permutation $\sigma_{ij}\in\sym(\Z_3)$. For an ordered pair $(i,j)$ with $i > j$ we set $\sigma_{ij}:=\sigma_{ji}^{-1}$.  

Given a binary operation $\diamond$ satisfying \eqref{eq1} and an  array of bijections\\ $\Sigma:=\left( \sigma_{ij} \right)_{0\leq i < j\leq r},\sigma_{ij}\in\sym(\Z_3)$, we define a binary relation $R:=R(\diamond,\Sigma)$ on $\Omega$ as follows 
\begin{equation}\label{eq22}
(u_i,v_j)\in R \iff i\neq j\ \land\ \sigma_{ij}(\pi_{i\diamond j}(u)) =  \pi_{j\diamond i}(v).
\end{equation}
The relation $R$ is a symmetric binary relation on the set $\Omega$.
It determines an undirected graph $\Gamma:=(\Omega,R)$. It follows directly from the construction that each $\Omega_i$ is a coclique of $\Gamma$. 

The statement below is a key one in this Section. Although parts of it may be retrieved from the papers~\cite{W,FDF}, we provide here  complete proofs to make the text self-contained.
\begin{prop}\label{p4} The following properties hold.
\begin{enumerate}
\item[(a)] The graph $\Gamma$ is a strongly regular graph with parameters  $$\left(3^d\frac{3^d+1}{2},3^{d-1}\frac{3^d-1}{2},3^{d-1}\frac{3^{d-1}-1}{2},3^{d-1}\frac{3^{d-1}-1}{2}\right).$$
\item[(b)] 
The partition $\Omega =\Omega_0\cup ... \cup \Omega_r$ is a Hoffman's coloring.\footnote{A Hoffman coloring is a proper vertex coloring in which every color class meets Hoffman's coclique bound, see~\cite{HT} for details.}
\item[(c)] The partition $1_\Omega, S, R, \Omega^2\setminus(1_\Omega \cup S\cup R)$  
is a symmetric three-class imprimitive \as.
\end{enumerate}
\end{prop}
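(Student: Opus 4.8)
The plan is to treat the three parts in the order (a), (b), (c), since (b) and (c) both rely on the structural computation carried out for (a). For part (a), I would first establish that $\Gamma$ is regular: fix a vertex $u_i$ and count its neighbours in each $\Omega_j$ with $j\neq i$. By \eqref{eq22}, a vertex $v_j$ is adjacent to $u_i$ exactly when $\pi_{j\diamond i}(v)=\sigma_{ij}(\pi_{i\diamond j}(u))$, i.e. $v$ lies in a prescribed affine hyperplane (a coset of $H_{j\diamond i}$, which makes sense because $j\diamond i\in[1,r]$ by the first condition in \eqref{eq1}). That coset has $3^{d-1}$ points, so $u_i$ has exactly $3^{d-1}$ neighbours in each of the $r$ fibers $\Omega_j$, $j\neq i$, giving valency $r\cdot 3^{d-1}=3^{d-1}\frac{3^d-1}{2}$. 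Next I would compute $\lambda$ and $\mu$: given two vertices $u_i,w_k$, count common neighbours $v_j$. If $i=k$ (so they lie in the same fiber, the non-adjacent ``$S$-case''), then for each $j\neq i$ one intersects two cosets of $H_{j\diamond i}$; being cosets of the same hyperplane they are either equal or disjoint, and the construction (bijectivity of $x\mapsto\pi_{j\diamond i}(x)$ restricted to the relevant fibres, together with the freedom in $\sigma$) forces exactly one value of the pair to coincide... more carefully, one gets a uniform count $3^{d-1}\frac{3^{d-1}-1}{2}$ after summing over $j$. If $i\neq k$, one must intersect, for each $j\notin\{i,k\}$, cosets of $H_{j\diamond i}$ and $H_{j\diamond k}$; here the second condition in \eqref{eq1} guarantees $j\diamond i\neq j\diamond k$ so the two hyperplanes are distinct, their intersection is an affine subspace of dimension $d-2$ with $3^{d-2}$ points, and one also has to add the contribution from $j=i$ and $j=k$ separately. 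Summing yields the same number $\mu=\lambda=3^{d-1}\frac{3^{d-1}-1}{2}$, and $\Gamma$ is strongly regular with the claimed parameters.

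\textbf{Parts (b) and (c).} For (b), recall that Hoffman's coclique bound for an SRG with valency $k$ and least eigenvalue $-m$ states that a coclique has size at most $n\cdot\frac{m}{k+m}$; here each $\Omega_j$ is a coclique of size $3^d$, and I would simply verify from the parameters of (a) that $3^d$ equals this bound — equivalently, that the least eigenvalue of $\Gamma$ is $-3^{d-1}$ and that $3^d(k+m)=nm$, which is a direct substitution. (That the $\Omega_j$ are cocliques was already noted after \eqref{eq22}.) For (c), the partition $\cC:=\{1_\Omega,S,R,T\}$ with $T:=\Omega^2\setminus(1_\Omega\cup S\cup R)$ is visibly a symmetric rainbow, so by Theorem~\ref{020619b} it suffices to check coherence, i.e. that the span of the four $(0,1)$-matrices $I,\und{S},\und{R},\und{T}$ is closed under ordinary matrix multiplication. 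Since $\und{S}$ is the adjacency matrix of a disjoint union of copies of $K_{3^d}$, it satisfies $\und{S}^2=(3^d-1)I+(3^d-2)\und{S}$ and $\und{S}$ commutes with everything in sight; and $\und{I}+\und{S}+\und{R}+\und{T}=J$. The only genuinely new relation is $\und{R}^2$, which is controlled by part (a): an SRG relation satisfies $\und{R}^2=kI+\lambda\und{R}+\mu(J-I-\und{R})$, and since $J-I-\und{R}=\und{S}+\und{T}$ lies in the span, so does $\und{R}^2$. The remaining products ($\und{R}\und{S}$, $\und{S}\und{T}$, etc.) then follow by linearity from $J$, $\und{S}^2$, $\und{R}^2$ and the $\circ$-orthogonality of the basis. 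Imprimitivity is immediate: $1_\Omega\cup S$ is the equivalence relation with classes $\Omega_0,\dots,\Omega_r$, a nontrivial system of imprimitivity.

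\textbf{Main obstacle.} The routine bookkeeping is in (a): the delicate point is the common-neighbour count when $i\neq k$, where one must correctly handle the three regimes $j\notin\{i,k\}$, $j=i$, and $j=k$, and where the compatibility conditions \eqref{eq1} on $\diamond$ (and the arbitrariness of the permutations $\sigma_{ij}$, constrained only by $\sigma_{ij}=\sigma_{ji}^{-1}$) are exactly what is needed to make the count independent of the choices and of the particular vertices. I expect this uniformity — showing $\lambda=\mu$ and that neither depends on $\Sigma$ or $\diamond$ — to be the crux; once it is in place, (b) and (c) are formal consequences of the parameter list and the earlier structure theory.
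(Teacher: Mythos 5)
Your parts (a) and (b) follow the paper's own route: fiber-by-fiber neighbour counts via affine hyperplane cosets for the valency, intersection of cosets for $\lambda$ and $\mu$, and then checking that $|\Omega_i|=3^d$ meets the Hoffman bound (least eigenvalue $-3^{d-1}$, since $k-\mu=3^{2d-2}$). Two local imprecisions in your $\lambda,\mu$ plan are worth flagging: in the same-fiber case the correct criterion is that, for a given $j\neq i$, the two cosets of $H_{j\diamond i}$ coincide iff $u-w\in H_{i\diamond j}$ (the permutation $\sigma_{ij}$ cancels, so the "freedom in $\sigma$" plays no role), and this happens for exactly $\frac{3^{d-1}-1}{2}$ indices $j$ by bijectivity of $j\mapsto i\diamond j$; and in the different-fiber case there is no "contribution from $j=i$ and $j=k$" to add, since the fibers are cocliques -- indeed the total $3^{d-2}(r-1)$ already equals $3^{d-1}\frac{3^{d-1}-1}{2}$, so any extra contribution would ruin the count.

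The genuine gap is in part (c). The paper obtains (c) by citing Proposition 4.1 of \cite{HT}: a Hoffman coloring of an SRG automatically produces this three-class imprimitive scheme. You instead attempt a direct closure check for $\langle I_\Omega,\und{S},\und{R},\und{T}\rangle$, but the key step -- "the remaining products ($\und{R}\,\und{S}$, $\und{S}\,\und{T}$, etc.) then follow by linearity from $J$, $\und{S}^2$, $\und{R}^2$ and the $\circ$-orthogonality of the basis" -- is not a valid inference: knowing that $\und{S}^2$ and $\und{R}^2$ lie in the span says nothing about the mixed product $\und{R}\,\und{S}$, and the claim that "$\und{S}$ commutes with everything in sight" is precisely what has to be proved. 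What is actually needed is the regularity statement that every vertex has a constant number, namely $3^{d-1}$, of $R$-neighbours in each fiber other than its own; equivalently, with $\und{E}:=I_\Omega+\und{S}$ the adjacency matrix of the fiber partition, $\und{R}\cdot\und{E}=\und{E}\cdot\und{R}=3^{d-1}(J_\Omega-\und{E})$. This identity is exactly the content of the Hoffman-coclique tightness that the paper imports from \cite{HT}, and in your setting it is already implicit in your own valency computation from part (a) (each $u_i$ has $3^{d-1}$ neighbours in every $\Omega_j$, $j\neq i$). Once it is stated, all mixed products reduce to polynomials in $I_\Omega,\und{E},\und{R},J_\Omega$ and closure (hence the scheme property, with $1_\Omega\cup S$ the nontrivial imprimitivity relation) follows; without it, your argument for (c) as written does not go through.
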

\begin{proof} 

{\bf Part (a).} First we note that the number of points is $|V|(r+1) = 3^d\frac{3^d+1}{2}$.

Let us prove now that $\Gamma$ is regular. 
Pick an arbitrary point $u_i\in\Omega_i$. According to~\eqref{eq22} a point $x_j$ is connected to $u_i$ (both $u$ and $x$ belong to $V$) iff $j\neq i$ and $\sigma_{ij}(\pi_{i\diamond j}(u)) = \pi_{j\diamond i}(x)$. Since $\pi_{j\diamond i}$ is a linear function, the  solutions of above equation form an affine hyperplane in $V$. \textcolor{black}{ Therefore, for a fixed $j$ the number of solutions is $3^{d-1}$. Hence $u_i$ is adjacent to $3^{d-1}$ points of $\Omega_j$. This yields us $3^{d-1}\cdot r  = 3^{d-1}\frac{3^d-1}{2}$ points adjacent to $u_i$.}

Now we show that any pair $u_i,v_j$ of distinct points have the same number of common neighbors, namely $3^{d-1}\frac{3^{d-1}-1}{2}$.

Assume first that $i=j$. Then $x_k$ is connected to both $u_i$ and $v_i$ iff $k\neq i$ and 
$$\left\{
\begin{array}{rcl}
\sigma_{ik}(\pi_{i\diamond k}(u)) & = & \pi_{k\diamond i}(x);\\
\sigma_{ik}(\pi_{i\diamond k}(v)) & = & \pi_{k\diamond i}(x).
\end{array}\right.
$$
It is easy to see that the system is consistent iff 
$$\pi_{i\diamond k}(u) = \pi_{i\diamond k}(v)\iff u-v\in\ker(\pi_{i\diamond k}) = H_{i\diamond k}.$$
In the latter case the above system has $3^{d-1}$ solutions.
Since every non-zero vector of $V$ is contained in $\frac{3^{d-1}-1}{2}$ hyperplanes, there exist $\frac{3^{d-1}-1}{2}$ indices $k\in [0,r]\setminus \{i\}$ with $u-v\in H_{i\diamond k}$. Therefore the number of joint neighbors is $3^{d-1}\frac{3^{d-1}-1}{2}$ in the case of $i=j$.

Assume now that $i\neq j$. Then $x_k$ is connected to both $u_i$ and $v_j$ iff $k\neq i,j$ and 
$$\left\{
\begin{array}{rcl}
\sigma_{ik}(\pi_{i\diamond k}(u)) & = & \pi_{k\diamond i}(x);\\
\sigma_{jk}(\pi_{j\diamond k}(v)) & = & \pi_{k\diamond j}(x).
\end{array}\right.
$$
The above system is always consistent, since $i\neq j\implies k\diamond i\neq k\diamond j\implies \pi_{k\diamond i}$ and $\pi_{k\diamond j}$ are linearly independent.  The number of solutions for a fixed $k$ equals to $|H_{k\diamond i}\cap H_{k\diamond j}|=3^{d-2}$.
Multiplying by the number of $k$'s distinct from $i,j$ we conclude that 
$u_i,v_j$ have $3^{d-2}(r-1) = 3^{d-2}\frac{3^d - 3}{2} = 
3^{d-1}(\frac{3^{d-1} - 1}{2})$ common neighbors, as claimed.  

\medskip

\noindent{\bf Proof of part (b).} It follows from the construction of $\Gamma$ that each $\Omega_i$ is a coclique of $\Gamma$. Therefore the partition $\Omega = \Omega_0\cup ...\cup \Omega_r$ is a coloring of $\Gamma$. The sizes of $\Omega_i$ are equal to $3^d$ which meets Hoffman's coclique bound.

\medskip

\noindent{\bf Proof of part (c).}
It was shown in part (b) that
$\Omega=\Omega_0\cup ...\cup \Omega_r$ is a Hoffman coloring  of $\Gamma$. By Proposition 4.1, \cite{HT} the relations  $1_\Omega, S, R, \Omega^2\setminus(1_\Omega \cup S\cup R)$  form an association scheme. 
\end{proof}
For each pair $i < j$ of indices we choose an arbitrary $3$-cycle $\theta_{ij}\in\{(0,1,2),(0,2,1)\}$. Write $\Theta = (\theta_{ij})_{0\leq i < j\leq r}$, $\Theta \Sigma= (\theta_{ij}\sigma_{ij})_{0\leq i < j\leq r},\Theta^2\Sigma =(\theta_{ij}^2\sigma_{ij})_{0\leq i < j\leq r}$ (product here is the product of permutations in $\sym(\Z_3)$). 

\begin{theorem}\label{t1}
Define  $R_1:=R(\diamond,\Sigma),R_2:=R(\diamond,\Theta\Sigma),R_3:=R(\diamond,\Theta^2\Sigma)$. Then the relations $1_\Omega,S,R_1,R_2,R_3$ form a \js.
\end{theorem}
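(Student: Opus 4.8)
The strategy is to verify the hypotheses of Proposition~\ref{p3}. We have a symmetric homogeneous rainbow $\XX = (\Omega,\{1_\Omega,S,R_1,R_2,R_3\})$ (symmetry of each $R_k$ follows from the construction in~\eqref{eq22}, and the five relations partition $\Omega^2$ because, for fixed $i\neq j$, the condition $\sigma_{ij}(\pi_{i\diamond j}(u))=\pi_{j\diamond i}(v)$ together with its $\theta_{ij}$- and $\theta_{ij}^2$-twisted versions partitions the three cosets of $\ker\pi_{j\diamond i}$ in $V$ — exactly one of the three congruences holds for each pair $(u_i,v_j)$). So with $C_1 := S$ and $\{C_2,C_3,C_4\} := \{R_1,R_2,R_3\}$ and $C := R_1\cup R_2\cup R_3$, it suffices to show that for each $k\in\{1,2,3\}$ the partition $\cC_k = \{1_\Omega, S, R_k, C\setminus R_k\}$ is an association scheme.

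For $k=1$ this is precisely Proposition~\ref{p4}(c): the relations $1_\Omega, S, R_1, \Omega^2\setminus(1_\Omega\cup S\cup R_1)$ form a symmetric three-class imprimitive association scheme, and $\Omega^2\setminus(1_\Omega\cup S\cup R_1) = C\setminus R_1 = R_2\cup R_3$. So the real content is to handle $k=2$ and $k=3$. The key observation is that $R_2$ and $R_3$ are themselves instances of the WFDF relation $R(\diamond,-)$: by definition $R_2 = R(\diamond,\Theta\Sigma)$ and $R_3 = R(\diamond,\Theta^2\Sigma)$, and the arrays $\Theta\Sigma$ and $\Theta^2\Sigma$ satisfy the same compatibility requirement as $\Sigma$ (each entry lies in $\sym(\Z_3)$, and the skew-symmetry condition $\sigma_{ij} = \sigma_{ji}^{-1}$ is preserved because $\theta_{ij}^{-1} = \theta_{ji}$ once we extend $\Theta$ to ordered pairs consistently, using that $\theta_{ij}$ is a $3$-cycle so $\theta_{ij}^2 = \theta_{ij}^{-1}$). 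Hence Proposition~\ref{p4} applies verbatim with $\Sigma$ replaced by $\Theta\Sigma$ (resp.\ $\Theta^2\Sigma$): the graph $\Gamma_2 := (\Omega,R_2)$ (resp.\ $\Gamma_3$) is strongly regular with the same parameters, $\Omega = \Omega_0\cup\cdots\cup\Omega_r$ is again a Hoffman coloring of it, and therefore by Proposition~\ref{p4}(c) the relations $1_\Omega, S, R_2, \Omega^2\setminus(1_\Omega\cup S\cup R_2)$ form an association scheme. Since $\Omega^2\setminus(1_\Omega\cup S\cup R_2) = R_1\cup R_3 = C\setminus R_2$, this is exactly the statement that $\cC_2$ is an association scheme; likewise for $\cC_3$.

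With all three partitions $\cC_1,\cC_2,\cC_3$ shown to be association schemes, Proposition~\ref{p3} gives that $\XX$ is a Jordan scheme, completing the proof.

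\textbf{Main obstacle.} The only genuinely delicate point is checking that $\Theta\Sigma$ and $\Theta^2\Sigma$ are legitimate parameter arrays for the WFDF construction — i.e.\ that the skew-symmetry convention $\sigma_{ji} = \sigma_{ij}^{-1}$ survives left-multiplication by the fixed $3$-cycles $\theta_{ij}$. This works because $\sym(\Z_3)$ is abelian on its $3$-cycles: writing $\theta_{ji} := \theta_{ij}^{-1} = \theta_{ij}^2$ for $i>j$, one has $(\theta_{ji}\sigma_{ji}) = \theta_{ij}^{-1}\sigma_{ij}^{-1} = (\sigma_{ij}\theta_{ij})^{-1}$, and since $\theta_{ij}$ and $\sigma_{ij}\theta_{ij}$... one must check $\sigma_{ij}\theta_{ij}$ and $\theta_{ij}\sigma_{ij}$ coincide when $\sigma_{ij}$ is itself a $3$-cycle or the identity, but not when $\sigma_{ij}$ is a transposition. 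This forces a short case analysis (or a reformulation of the skew-symmetry bookkeeping), and is where care is needed; everything else is an immediate reduction to Proposition~\ref{p4} and Proposition~\ref{p3}.
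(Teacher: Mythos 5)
Your proposal is correct and follows essentially the same route as the paper: check that $1_\Omega,S,R_1,R_2,R_3$ form a symmetric partition of $\Omega^2$ (using that $\theta_{ij}$ is a $3$-cycle, so for $i<j$ exactly one of the three twisted congruences holds), apply Proposition~\ref{p4}(c) to each $R_a=R(\diamond,\Theta^{a-1}\Sigma)$, and conclude via Proposition~\ref{p3}. The one comment worth making is that your ``main obstacle'' is not actually there: the input to the WFDF construction is an arbitrary array of permutations indexed only by pairs $i<j$ (the entries for $i>j$ are then \emph{defined} as inverses in \eqref{eq22}'s convention, which is what makes $R$ symmetric), so $\Theta\Sigma$ and $\Theta^2\Sigma$ are automatically legitimate parameter arrays and no commutation check in $\sym(\Z_3)$ (and no case analysis on whether $\sigma_{ij}$ is a transposition) is needed --- both the partition argument and Proposition~\ref{p4} only ever use the defining condition for $i<j$ together with symmetry.
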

\begin{proof} 

First we show that $I_\Omega, S,R_1,R_2,R_3$ form a symmetric homogeneous rainbow.

It follows from Proposition~\ref{p4} that $R_1,R_2,R_3$ and $S$ are symmetric and regular relations. It follows from the construction that $S$ intersects trivially each of the relations $R_1,R_2,R_3$.

If $(u_i,v_j)\in R_a\cap R_b$ with $a\neq b$, then
$i\neq j$ and we may assume that $i < j$ (because, both $R_a$ and $R_b$ are symmetric). It follows from~\eqref{eq22} that
$$
\left.
\begin{array}{rcl}
\theta_{ij}^{a-1}(\sigma_{ij}(\pi_{i\diamond j}(u))) & = & \pi_{j\diamond i}(v),\\
\theta_{ij}^{b-1}(\sigma_{ij}(\pi_{i\diamond j}(u))) & = & \pi_{j\diamond i}(v)
\end{array}\right\}\implies
\theta_{ij}^{b-a}(\sigma_{ij}(\pi_{i\diamond j}(u))) = \sigma_{ij}(\pi_{i\diamond j}(u)),
$$
a contradiction, since $\theta_{ij}^{b-a}$ is a 3-cycle on $\Z_3$ and has no fixed points. Thus we have proven that $S,R_1,R_2,R_3$ are pairwise disjoint. Let us show now that their union coincides with a complete graph. Pick an arc $e=(u_i,v_j)$ of the complete graph $K_\Omega$. If $i = j$, then $e\in S$. If $i\neq j$, then, replacing $e$ by $(v_j,u_i)$, if necessary, we may assume that $i < j$ . Since $\theta_{ij}$ is a 3-cycle on $\Z_3$ and both $\sigma_{ij}(\pi_{i\diamond j}(u))$ and $\pi_{j\diamond i}(v)$ are elements of $\Z_3$, there exists a power of $\theta_{ij}$ which moves the first element into the second one, i.e.
$$
\theta_{ij}^{a-1}(\sigma_{ij}(\pi_{i\diamond j}(u))) = \pi_{j\diamond i}(v))
$$
for some $a\in\{1,2,3\}$. This implies $(u_i,v_j)\in R(\diamond,\Theta^{a-1}\Sigma) = R_a$.

Thus we have shown that $S,R_1,R_2,R_3$ form a symmetric regular partition of the complete graph. It follows from Proposition~\ref{p4}, part (c) that this partition satisfies the assumptions of Proposition~\ref{p3}. Therefore it is a \js.
\end{proof}
The statement below provides sufficient conditions when the Jordan schemes constructed in Theorem~\ref{t1} are proper. 
\begin{prop}\label{160619a} Let $\XX = (\Omega,\{1_\Omega,S,R_1,R_2,R_3\})$ be an arbitrary rank five symmetric Jordan scheme of order 
$3^d\frac{3^d + 1}{2}$ and valencies $1, 3^d-1, 3^{d-1}\frac{3^d-1}{2}, 3^{d-1}\frac{3^{d}-1}{2},3^{d-1}\frac{3^{d}-1}{2}$ where $d$ is an even integer. Assume that the basic graph $(\Omega,S)$ is a disjoint union of complete graphs. Then the scheme is proper.
\end{prop}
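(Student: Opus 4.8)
The plan is to argue by contradiction: suppose $\XX$ is improper, so $\XX = \widetilde{\cA}$ for some coherent algebra $\cA$, equivalently (by the Remark after Proposition~\ref{081219a}) $\cA = WL(\XX)$ and $\XX = \widetilde{\cA}$ with all basic matrices of $\cA$ symmetric is false — more precisely, there is a (non-symmetric) coherent configuration whose symmetrization is $\XX$. The key structural fact to exploit is that the basic graph $(\Omega,S)$ is a disjoint union of $m$ complete graphs $K_n$, where $n = 3^d$ and $m = \tfrac{3^d+1}{2}$, so the partition into these cliques is an equivalence relation $E := 1_\Omega \cup S$ which is an $\XX$-relation, hence an $\cA$-relation. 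This makes $E$ a parabolic (system of imprimitivity) of the coherent configuration underlying $\cA$, so $\cA$ has a well-defined quotient $\cA/E$ on $m = \tfrac{3^d+1}{2}$ points. First I would show that this quotient is a rank-$3$ or rank-$4$ coherent configuration on $\tfrac{3^d+1}{2}$ points: the three relations $R_1,R_2,R_3$ each meet each off-diagonal block $\Omega_i\times\Omega_j$ of $E$ in a $3^{d-1}$-regular bipartite graph (valency count $3^{d-1}\cdot\tfrac{3^d-1}{2}$ split over $\tfrac{3^d-1}{2}$ other cliques gives $3^{d-1}$ edges per clique-pair), so at the quotient level the images of $R_1,R_2,R_3$ all collapse onto the single relation "distinct cliques". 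Thus $\cA/E$ has rank $2$, i.e. is trivial on $\tfrac{3^d+1}{2}$ points.

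**Using the fibres of $E$.** Next I would analyze the restriction of $\cA$ to a single class $\Omega_i$ of $E$: on $\Omega_i$ (which has size $3^d$) the induced coherent configuration is trivial as well (its only relations inside $\Omega_i\times\Omega_i$ are $1_{\Omega_i}$ and its complement, since $S$ restricted there is the complete graph). Now comes the heart of the argument: since $\XX$ is proper of rank five, some basic relation among $R_1,R_2,R_3$ must split in $\cA$ into a non-symmetric pair $R_a = T \cup T^\top$ with $T \neq T^\top$ (if all $R_a$ stayed symmetric in $\cA$, then $\cA$ would be symmetric, $WL(\XX)=\cA=\widetilde{\cA}=\XX$, contradicting properness). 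So $\cA$ is an imprimitive coherent configuration with parabolic $E$, trivial quotient, trivial fibres, and at least one antisymmetric basic relation $T$ with $T\cup T^\top = R_a$ one of the SRG relations. The plan is to derive a numerical contradiction from this: a coherent configuration whose quotient by a parabolic $E$ is trivial and whose induced configuration on each $E$-class is trivial is of a very restricted "blow-up" type; its non-reflexive basic relations between distinct fibres are governed by a group-like structure. Concretely, $T$ defines, for each ordered pair $(\Omega_i,\Omega_j)$ of distinct cliques, a $0/1$ matrix block $T_{ij}$ of size $3^d\times 3^d$ which is regular of valency $k$ with $k + k' = 3^{d-1}$ where $k'$ is the valency of the complementary block of $R_a$, and these blocks multiply "coherently". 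I expect that coherence of $\cA$ forces these blocks to be (adjacency matrices related to) a sharply transitive set / a system of mutually orthogonal structures on $3^d$ points, and then the intersection-number equations together with $d$ being even produce a contradiction in a congruence modulo a suitable prime or via an eigenvalue/rationality obstruction.

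**Where I expect the real difficulty.** The main obstacle is exactly this last step: ruling out that the SRG relation $R_a$ (with parameters $\bigl(3^d\tfrac{3^d+1}{2},\,3^{d-1}\tfrac{3^d-1}{2},\,3^{d-1}\tfrac{3^{d-1}-1}{2},\,3^{d-1}\tfrac{3^{d-1}-1}{2}\bigr)$) can be written as $T\cup T^\top$ for an antisymmetric relation $T$ lying in a coherent configuration of this blow-up shape. I would attack it by computing the intersection numbers of $\cA$ in terms of the valency $k$ of the generic block $T_{ij}$: the condition that $T$ together with $1_\Omega$, $S$, the blocks of the two complementary SRG relations $R_b,R_c$, and $T^\top$ form a coherent configuration forces relations among $k$, $3^d$, and $\tfrac{3^d+1}{2}$; in particular the eigenvalue multiplicities of $\und{T} + \und{T^\top} = \und{R_a}$ must be integers, and the eigenvalues of $\und{T}$ itself (which need not be real but whose real parts are constrained) must be algebraic integers with bounded conjugates. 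Since $d$ is even, $3^d \equiv 1 \pmod 8$ and $\tfrac{3^d+1}{2}$ is odd, which should obstruct the existence of the required antisymmetric "difference-set-like" block — this parity/congruence input is presumably where the evenness hypothesis on $d$ is consumed. If the direct intersection-number computation is too heavy, the fallback is to invoke a known classification of imprimitive association schemes with trivial quotient and trivial fibres (these are closely related to Cayley schemes over groups of order $3^d$ acting on $\tfrac{3^d+1}{2}$ translates), and check that none of them symmetrizes to the WFDF parameter set when $d$ is even.
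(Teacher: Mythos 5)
Your overall framework---assume $\XX$ is the symmetrization of a coherent configuration $\cA$ and exploit the imprimitivity coming from the cliques of $S$---matches the paper's starting point, but there are two genuine gaps. The first is logical: your dismissal of the case in which nothing splits is circular. If every $R_a$ (and $S$) remains a basic relation of $\cA$, then $\cA$ is $\XX$ itself viewed as a symmetric four-class association scheme; this is entirely consistent with the contradiction hypothesis that $\XX$ is improper (indeed it would witness improperness), so you cannot dispose of it by saying it "contradicts properness"---properness is precisely the conclusion to be proven. This case needs its own argument, and in the paper it is the final third of the proof: the closed subset $\{1_\Omega,S\}$ has order $3^d$, which does not divide the valency $3^{d-1}\frac{3^d-1}{2}$, so $R_1,R_2,R_3$ lie in one coset, forcing $\und{R}_i\cdot(\und{S}+I_\Omega)=3^{d-1}(\und{R}_1+\und{R}_2+\und{R}_3)$, and then the triangle identity $p_{R_1,S}^{R_2}k_{R_2}=p_{R_2,R_1}^{S}k_S$ gives $p_{R_2,R_1}^{S}=3^{2d-2}/2\notin\Z$, a contradiction. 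Without something of this kind your proof cannot close.

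The second gap is substantive: the step you yourself flag as "the real difficulty"---ruling out $R_a=T\cup T^\top$ with $T$ antisymmetric---is exactly where the quantitative content lies, and your proposal only gestures at possible congruence or eigenvalue obstructions without carrying any of them out. The paper's actual argument here is short and elementary: $\und{T}\cdot\und{T}^\top$ is symmetric, hence a nonnegative integer combination of $I_\Omega,\und{S},\und{R}_1,\und{R}_2,\und{R}_3$ with diagonal coefficient $k/2$; comparing row sums modulo the gcd $g=\frac{3^d-1}{2}$ of the nontrivial valencies shows $g$ divides $(k/2)^2-(k/2)$, and since $d$ is even one has $3^d-1\equiv 0\ ({\rm mod}\ 8)$, which makes $3^{d-1}\frac{3^d-1}{4}-1$ odd and yields the contradiction; this is where the evenness of $d$ is consumed, not in an eigenvalue-multiplicity computation. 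Relatedly, your assertion that the restriction of $\cA$ to a clique $\Omega_i$ is automatically trivial is unjustified: a priori $S\cap\Omega_i^2$ could split into a doubly regular tournament pair, giving a nonsymmetric rank-three constituent; the paper excludes this using $3^d\equiv 1\ ({\rm mod}\ 4)$ (again from $d$ even), a case your sketch omits, as it also omits the (easy, odd-order) argument that the ambient coherent configuration must be homogeneous.
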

\begin{proof} Assume, towards a contradiction, that $\XX'=(\Omega,\cC')$ is a CC such that its symmetrization $\tilde{\XX'}$ is a Jordan scheme with the parameters described above. First, we note that $\XX'$ is homogeneous, since otherwise at least one of the symmetrized relations is a bipartite graph of even order while the scheme order is odd. It follows from the assumptions  that $\XX'$ is imprimitive with blocks of size $3^d$ formed by the cliques of $(\Omega,S)$. We denote these cliques by $\Omega_i$.

If $S = T\cup T^\top$, then both $T$ and $T^\top$ are anti-symmetric regular relations of degree $\frac{3^d-1}{2}$. Now one can realize that the restriction
$(\Omega_1,\{I_{\Omega_1},T\cap\Omega_1^2,T^\top\cap\Omega_1^2\})$ is a non-symmetric scheme of rank three. In this case $|\Omega_1|$ should be equal to $3$ modulo $4$  \textcolor{black}{(see the Subcestion~\ref{r3}).} But this contradicts to $|\Omega_1|=3^d\equiv\,1({\rm mod}\ 4)$.
Thus $S$ is a symmetric basic relation of $\XX'$.

Assume now that $R_i$ is not a basic relation of $\XX'$ for some $i$. Then $R_i = T_i\cup T_i^\top$ where $T_i$ is a suitable anti-symmetric basic relation of $\XX'$.
The valency of $T_i$ is $k/2$ where $k$ stands for the valency of $R_i$ (recall that $k=3^{d-1}\frac{3^d-1}{2}$).
The product $\und{T}_i\cdot\und{T}_i^\top$ is a symmetric matrix. Therefore 
it is a linear combination of $I_\Omega,\und{S},\und{R}_1,\und{R}_2,\und{R}_3$ with non-negative integers: 
$$
\und{T}_i\cdot\und{T}_i^\top = \frac{k}{2} I_\Omega + a \und{S} + b_1
\und{R}_1+b_2\und{R}_2+b_3\und{R}_3.
$$
This equality implies that $(k/2)^2 - (k/2)$ is divisible by the greatest common divisor $g$ of the valencies
$k_S= 3^d - 1,k_{R_1}=k_{R_2}=k_{R_3}=k = 3^{d-1}\frac{3^d-1}{2}$. 
A simple calculation yields us $g=\frac{3^d - 1}{2}$. Thus 
$(k/2)^2 - (k/2)= 3^{d-1}\frac{3^d-1}{4}(3^{d-1}\frac{3^d-1}{4} -1)$ is divisible by 
$\frac{3^d - 1}{2}$. Therefore, the factor $(3^{d-1}\frac{3^d-1}{4} -1)$ should be even. On the other hand, $3^d - 1$ is divisible by eight, because $d$ is even. This implies that $\frac{3^d-1}{4}$ is even too. But in this case $3^{d-1}\frac{3^d-1}{4} -1$ is odd. A contradiction. 

Thus we can conclude that $\XX'=\XX$, that is $\XX$ is a symmetric association scheme with $4$ classes. It remains to show that such a scheme does not exist. So, assume, towards a contradiction, that it exists. Then it is a commutative and imprimitive scheme with a closed subset $\mathcal{E}:=\{1,S\}$. If one of the non-trivial algebraic $\mathcal{E}$-cosets contains one element, say $R_1$, then $k_{R_1}=3^{d-1}\frac{3^d-1}{2}$ is divisible by $k_{\mathcal{E}}=3^d$ (Lemma 2.3.4 (i),\cite{Z}), a contradiction. Therefore all three relations $R_1,R_2,R_3$ belong to one coset. This implies that $\und{R}_i (\und{S} + I_\Omega) = 
\lambda_i (\und{R}_1+\und{R}_2+\und{R}_3)$ (Lemma 2.3.1, \cite{Z}). Comparing the valencies in both sides we obtain $3^{d-1}\frac{3^d-1}{2} 3^d = \lambda_i 3^d \frac{3^d-1}{2}\implies\lambda_i = 3^{d-1}$.
Now we obtain 
$$\und{R}_i (\und{S} + I_\Omega) = 
3^{d-1} (\und{R}_1+\und{R}_2+\und{R}_3) \implies \und{R}_1 \und{S}  = (3^{d-1}-1) \und{R}_1 + 3^{d-1} (\und{R}_2+\und{R}_3).
$$
By the so called triangle property of the association scheme \textcolor{black}{(\cite{BI},Proposition 2.2 (vi))} we obtain
$$
p_{R_1,S}^{R_2} k_{R_2} = p_{R_2,R_1}^S k_S\iff
3^{d-1} \cdot 3^{d-1} \frac{3^d-1}{2} = p_{R_2,R_1}^S (3^d-1)
\implies p_{R_2,R_1}^S = \frac{3^{2d-2}}{2}\not\in\Z.
$$
A contradiction.
\end{proof}
{\bf Remark.} We think that in the case of $d>1$ being odd most of the Jordan schemes \textcolor{black}{constructed in this section} are also proper.  We still did not find a proof for that. The only thing we can show  is that if the corresponding  Jordan scheme is non-proper, then it is a fusion of a rank $6$ non-commutative scheme. This is because the valencies of $R_1,R_2,R_3$ are odd, and, for this reason, none of those relations can split into a union of an anti-symmetric relation and its transposed. The relation $S$ in this case splits into a pair $U,U^\top$ and $\{1_\Omega,U,U^\top\}$ is an anti-symmetric normal closed subset. We refer a reader to~\cite{HZ,FZ} where \textcolor{black}{such association schemes} are studied in detail. 

Note that if $d=1$, then the construction yields a unique Jordan scheme. This scheme is non-proper and coincides with the symmetrization of the thin scheme of the group $S_3$\textcolor{black}{, cf. \cite{S59}, as well as \cite{KMR}, Section 21.}

\section{Jordan schemes constructed by switching in non-commutative association schemes}\label{switching}

\newcommand{\fT}{{\mathfrak T}}

Let $\fT:=(\Omega,\cR=\{C_0,C_1,....,C_{m-1},S_0,S_1,...,S_{m-1}\})$ be an \as\ of order $m(n+1)$  with the following multiplication table ($C_0$ is the identity relation), cf. \cite{KS,SR}:
\begin{equation}\label{eq:310319a}
\begin{array}{c}
\und{C}_i \cdot \und{C}_j = \und{C}_{i+j}, \und{C}_i\cdot \und{S}_j = \und{S}_{i+j},\und{S}_j\cdot \und{C}_i = \und{S}_{j-i},\\
\und{S}_i\cdot \und{S}_j = n \und{C}_{i-j} + \frac{n-1}{m}(\und{S}_0+...+\und{S}_{m-1}),
\end{array}
\end{equation}
here the arithmetic is done modulo $m$ and $m\,|\, (n-1)$.

The existence of such schemes was shown in \cite{KS,SR,KRW}. The valencies of $C_i$'s are one, while the valencies of $S_i$'s are $n$. The graphs $(\Omega,S_i),i=0,...,m-1$ are pairwise isomorphic distance regular antipodal covers of $K_{n+1}$.

The scheme has the unique non-trivial closed subset $\cC:=\{C_0,C_1,...,C_{m-1}\}$.
The union $E:=C_0\cup C_1\cup ...\cup C_{m-1}$ is an equivalence relation with $n+1$ classes (called fibers in what follows) of size $m$. Note that 
$\cC$ is the thin radical in the sense of Zieschang \cite{Z}.

The symmetrization of the above scheme yields us a \textcolor{black}{non-proper} \js\ $\tilde{\fT}$ of rank $m + \lfloor m/2\rfloor +1$
with the following set of basic relations:
$$\tilde{\cR} = \{C_i\cup C_i^\top\,|\,i\in\Z_m\}\cup \{S_i\,|\,i\in\Z_m\}.$$ 

We will show how to get a new Jordan scheme from this one by a certain switching of color graphs. To define the switching we partition the point set $\Omega$ into a union of two disjoint subsets $\Omega= \Omega_1\cup \Omega_2$ where 
$\Omega_1$ is an arbitrary fiber of $E$ and $\Omega_2:=\Omega\setminus \Omega_1$. 
Then every relation $S_i,i=0,...,m$ splits into two disjoint subsets: the edges between the parts $S_i^b:=S_i \cap (\Omega_1\times \Omega_2\cup \Omega_2\times \Omega_1)$ and the edges within the parts $S_i^w:=S_i\cap (\Omega_2\times \Omega_2)$ (note that $S_i\cap (\Omega_1\times \Omega_1)=\emptyset$).

It is worth to note that $(\Omega,S_i^b)$ is a bipartite graph with $|S_i^b(\omega)|=n$ if $\omega\in \Omega_1$ and $|S_i^b(\omega)|=1$ if $\omega\in \Omega_2$. The graph $(\Omega,S_i^b)$ is a disjoint union of $m$ copies of $K_{1,n}$. In particular, the sets $S_i^b(\omega)$ and $S_i^b(\omega')$ are disjoint whenever $\omega'\neq \omega\in\Omega_1$.  The graph $(\Omega_2,S_i^w)$ is a regular graph of order $mn$ and valency $n-1$.

The main result of this section is the following
\begin{theorem}\label{310319b} The relations $D_i:=C_i\cup C_{-i},0\leq i\leq m/2, T_i:=S_i^b\cup S_{-i}^w,i\in\Z_m$  form a proper \js   \ denoted as $\fJ$.
\end{theorem}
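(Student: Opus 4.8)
\medskip
\noindent\textbf{Proof plan.}
The plan is to check directly that the $\bR$-span $\cJ$ of the adjacency matrices $I_\Omega$, $\und{D_i}$ $(1\le i\le\lfloor m/2\rfloor)$, $\und{T_i}$ $(i\in\Z_m)$ is $\star$-closed, and then that its coherent closure is strictly larger than $\widetilde{WL(\fJ)}$. Write $F_1,F_2$ for the diagonal $\{0,1\}$-matrices supported on $1_{\Omega_1}$ and $1_{\Omega_2}$ (so $F_1+F_2=I_\Omega$), put $q:=\frac{n-1}{m}$ and $\und{\Sigma}:=\sum_{k\in\Z_m}\und{S_k}$. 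Since every $C_k$ lies in the equivalence relation $E$, whose classes are the fibers of $\fT$ and one of which is $\Omega_1$, each $\und{C_k}$ is block-diagonal for $\Omega=\Omega_1\cup\Omega_2$; in particular $\und{C_k}$ commutes with $F_1,F_2$, while $F_1\und{S_i}F_1=0$, $F_1\und{\Sigma}F_1=0$ and $F_1\und{\Sigma}F_2=\und{\Omega_1\times\Omega_2}$ (all because $S_i\cap\Omega_1^2=\emptyset$). First I would verify that $\{1_\Omega\}\cup\{D_i\}\cup\{T_i\}$ really is a symmetric homogeneous rainbow: the $D_i$ partition $E$ (using $\und{C_i}^{\top}=\und{C_{-i}}$), the $T_i$ partition $\Omega^2\setminus E$ (the $S_i^b$ are pairwise disjoint, the $S_i^w$ are pairwise disjoint, and the two families live in disjoint regions of $\Omega^2$), every listed relation is symmetric because the $S_i$ are, and $1_\Omega=D_0$ is a basic relation; thus $\cJ$ contains $I_\Omega,J_\Omega$ and is closed under $\circ$ and $^{\top}$, and it remains to prove $\star$-closure.

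Two of the three types of products are routine. Using $\und{C_a}\und{C_b}=\und{C_{a+b}}$, $\und{C_a}\und{S_b}=\und{S_{a+b}}$ and the commutation of $\und{C_a}$ with $F_1,F_2$, one gets $\und{D_i}\und{D_j}=\und{C_{i+j}}+\und{C_{-(i+j)}}+\und{C_{i-j}}+\und{C_{-(i-j)}}$ and $\und{D_i}\und{T_j}=\und{T_{i+j}}+\und{T_{j-i}}$; both products are already symmetric, hence coincide with the corresponding $\star$-products, and both lie in $\cJ$.

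The hard part will be $\und{T_i}\star\und{T_j}$. Setting $A_k:=\und{S_k^b}$ and $B_k:=\und{S_k^w}$ (so $\und{T_k}=A_k+B_{-k}$, $\und{S_k}=A_k+B_k$), a short manipulation with $F_2=I_\Omega-F_1$ and the table~\eqref{eq:310319a} shows $A_aB_b=q\,\und{\Omega_1\times\Omega_2}$ for all $a,b$, whence
\[
\und{T_i}\und{T_j}-\und{S_i}\und{S_j}=B_{-i}B_{-j}-B_iB_j=nF_2\bigl(\und{C_{j-i}}-\und{C_{i-j}}\bigr)F_2+M_{ij}-M_{-i,-j},
\]
where $M_{ab}:=\und{S_a}F_1\und{S_b}$ is supported on $\Omega_2\times\Omega_2$. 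The decisive step --- the one I expect to be the real obstacle --- is the claim that $M_{ab}$ \emph{depends only on $a-b$}. This should follow from the identity $F_1\und{S_a}F_2\und{S_b}F_1=n\,\und{C_{a-b}}|_{\Omega_1^2}$ (read off from~\eqref{eq:310319a} using $F_1\und{S_a}F_1=0=F_1\und{\Sigma}F_1$): since $C_{a-b}$ has valency one and restricts to a permutation $\pi_{a-b}$ of $\Omega_1$, with $k\mapsto\pi_k$ a homomorphism $\Z_m\to\mathsf{Sym}(\Omega_1)$, the unique $\gamma_a(\alpha)\in\Omega_1$ joined to $\alpha\in\Omega_2$ by $S_a$ satisfies $\gamma_b(\alpha)=\pi_{a-b}(\gamma_a(\alpha))$, so the $(\alpha,\beta)$-entry of $M_{ab}$ equals $[\,\gamma_0(\alpha)=\pi_{a-b}(\gamma_0(\beta))\,]$. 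Writing $G_t:=M_{t,0}$ (so $G_t^{\top}=G_{-t}$), this yields $M_{ij}-M_{-i,-j}=G_{i-j}-G_{i-j}^{\top}$, which is anti-symmetric; the term $nF_2(\und{C_{j-i}}-\und{C_{i-j}})F_2$ is anti-symmetric as well. Taking the symmetric part of $\und{T_i}\und{T_j}$ then gives
\[
\und{T_i}\star\und{T_j}=\tfrac{n}{2}\bigl(\und{C_{i-j}}+\und{C_{j-i}}\bigr)+q\,\und{\Sigma}\in\cJ,
\]
because $\und{C_{i-j}}+\und{C_{j-i}}$ is $2I_\Omega$, $2\und{C_{m/2}}$ or $\und{D_{i-j}}$, and $\und{\Sigma}=\sum_k\und{T_k}$. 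This proves $\cJ$ is a coherent J-algebra, and, $1_\Omega$ being basic, $\fJ$ is a Jordan scheme (Theorem~\ref{040619a}).

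For properness --- which needs $m\ge 3$, since for $m\le 2$ the construction just reproduces the improper $\tilde{\fT}$ --- the plan is as follows. Fix $k\in\Z_m$ with $2k\not\equiv 0\pmod m$ and $i,j$ with $i-j=k$, and put $X:=\und{T_i}\cdot\und{T_j}-\und{T_j}\cdot\und{T_i}\in WL(\fJ)$. The computation above gives $X=n(\und{C_k}-\und{C_{-k}})+2nF_2(\und{C_{-k}}-\und{C_k})F_2+2(G_k-G_k^{\top})$, an anti-symmetric matrix which vanishes on $(\Omega_1\times\Omega_2)\cup(\Omega_2\times\Omega_1)$ and is non-zero on $\Omega_2^2\setminus E$: there $X=2(G_k-G_k^{\top})$, and $\supp(G_k)$ has valency $n$ while $\supp(G_k)\cap E$ has valency one, so $\supp(G_k)\setminus E\ne\emptyset$, whereas $\supp(G_k)\cap\supp(G_k^{\top})=\emptyset$ since $2k\not\equiv0$. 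Then $X\circ X\in WL(\fJ)$ is a non-zero \emph{symmetric} matrix that still vanishes on the bipartite region and is non-zero on $\Omega_2^2\setminus E$. If it lay in $\cJ$, then restricting a representation $X\circ X=\sum_a a_a\und{T_a}+\sum_a b_a\und{D_a}$ to the bipartite region --- where the $\und{D_a}$ vanish and $\und{T_a}$ restricts to $\und{S_a^b}$, and the $\und{S_a^b}$ have pairwise disjoint supports --- would force all $a_a=0$, so $X\circ X$ would be supported on $E$, a contradiction. Hence $\cJ\ne\widetilde{WL(\fJ)}$, and by the Remark after Proposition~\ref{081219a} the Jordan scheme $\fJ$ is proper.
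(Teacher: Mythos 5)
Your proposal is correct, and it is worth comparing with the paper's proof, from which it diverges mainly at the properness step. For the Jordan-closure half you and the paper rest on the same key fact: your "decisive" claim that $M_{ab}=\und{S}_aF_1\und{S}_b$ depends only on $a-b$ is precisely the fourth identity of Proposition~\ref{010419c}, $(\und{S}_i)_{21}(\und{S}_j)_{12}=\und{F}\cdot(\und{C}_{i-j})_{22}$ (your $G_t$ is the paper's $\und{F}\cdot(\und{C}_t)_{22}$), and your derivation via the functions $\gamma_a$ and permutations $\pi_k$ is the relational form of the paper's manipulation with $\und{S}_a\und{C}_k=\und{S}_{a-k}$; what your packaging buys is that, by noting $T_iT_j-S_iS_j$ is anti-symmetric, you get $T_i\star T_j=S_i\star S_j\in\cJ$ without ever computing the full $(2,2)$-block product or the explicit value of $T_i\cdot T_j$. (A trivial normalization: your formulas for $\und{D}_i\und{D}_j$ and $\und{D}_i\und{T}_j$ need the paper's $\tilde{D}_i=C_i+C_{-i}$ convention at $i=0,m/2$, which does not affect membership in $\cJ$.) For properness the routes genuinely differ: the paper computes $(T_i\cdot T_j)\circ E$ and applies the Schur--Wielandt principle to split off $\mtrx{(C_{i-j})_{11}}{O}{O}{O}$, whose symmetrization lies in $\widetilde{WL(\cB)}\setminus\cB$, whereas you take the commutator $X=T_iT_j-T_jT_i\in WL(\fJ)$, pass to the symmetric matrix $X\circ X$, and exclude it from $\cJ$ by a support argument (zero on the bipartite region, non-zero on $\Omega_2^2\setminus E$, using the valency count for $G_k$ and the disjointness of $\supp(G_k)$ and $\supp(G_k^\top)$). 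Both witnesses need a choice of $k=i-j$ with $2k\not\equiv 0\pmod m$ --- you state this explicitly, while in the paper it is implicit (its Schur--Wielandt step must separate the entry values $n$, $1$, $n-1$, which likewise fails when $2(i-j)\equiv 0$) --- so your restriction to $m\ge 3$ is not a weakening but the hypothesis actually required: for $m\le 2$ the scheme $\fT$ is commutative and the switching returns the improper $\tilde{\fT}$, consistent with the section's standing assumption that $\fT$ is non-commutative.
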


To prove the theorem we will write the matrices of $M_\Omega({\mathbb R})$ 
as $2\times 2$ block matrices $A = \mtrx{A_{11}}{A_{12}}{A_{21}}{A_{22}}$ where the $(i,j)$-block corresponds to $\Omega_i\times \Omega_j$-part of the matrix $A$. To ease notation we abbreviate $J_\Omega$ as $J$ and denote its $(i,j)$-block as $J_{ij}$.

In a block form the matrices of the original scheme look as follows:
$$
\und{C}_i = \mtrx{({\und{C}_i})_{11}}{O}{O}{(\und{C}_i)_{22}},\ 
\und{S}_j = \mtrx{O}{(\und{S}_j)_{12}}{(\und{S}_j)_{21}}{(\und{S}_j)_{22}}.
$$

We introduce the relation $F:=(S_0)_{21}(S_0)_{12}=(S_0)_{21}(S_0)_{21}^\top$. Since 
$(S_0)_{21} = S_0\cap (\Omega_2\times\Omega_1)$ is a surjective function from $\Omega_2$ onto $\Omega_1$,
the relation $F$ is an equivalence relation on $\Omega_2$ defined
as follows \footnote{$F$ is the kernel of the function $(S_0)_{21}$.}
$$\forall_{\omega,\omega'\in\Omega_2}\ (\omega,\omega')\in F\iff  (S_0)_{21}(\omega)=(S_0)_{21}(\omega')\iff S_0(\omega)\cap\Omega_1=S_0(\omega')\cap\Omega_1.
$$
In the matrix language the above equality transfers to $\und{F}=(\und{S}_0)_{21} (\und{S}_0)_{12}$. Since $(S_0)_{21}$ is 
an $n$-to-$1$ surjective function, the equivalence $F$ has $m$ classes of cardinality $n$.

                                                                                                                                                                                                                                                                                                                                                                                                                                                                                                                                                                                                                                                                                                                                                                                                                                              The statement below describes some matrix products that we need.
\begin{prop}\label{010419c} For any $i,j\in\Z_m$ it holds that
$$
\begin{array}{rcl}
(\und{S}_i)_{12} (\und{S}_j)_{21} & = & n (\und{C}_{i-j})_{11};\\
(\und{S}_i)_{12} (\und{S}_j)_{22} & = & \frac{n-1}{m} J_{12};\\
(\und{S}_i)_{22} (\und{S}_i)_{21} & =  & \frac{n-1}{m} J_{21};\\
(\und{S}_i)_{21} (\und{S}_j)_{12} & = &  \und{F}\cdot (\und{C}_{i-j})_{22};\\
(\und{S}_i)_{22} (\und{S}_j)_{22} & = &  \frac{n-1}{m}(J_{22} -\und{E}_{22}) + n (\und{C}_{i-j})_{22} -\und{F} \cdot (\und{C}_{i-j})_{22}.\\
\end{array}
$$
\end{prop}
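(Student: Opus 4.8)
The plan is to compute each of the five products directly, entry by entry, using Proposition~\ref{090920a} together with the combinatorial meaning of the relations involved. Recall that $(S_0)_{21}\colon\Omega_2\to\Omega_1$ is a surjective $n$-to-$1$ function, that $F$ is its kernel (an equivalence with $m$ classes of size $n$ on $\Omega_2$), and that the $C_i$'s encode the thin group $\Z_m$ acting on each fiber; in particular $\und{C}_i\cdot\und{C}_j=\und{C}_{i+j}$, $\und{C}_i\cdot\und{S}_j=\und{S}_{i+j}$ and $\und{S}_j\cdot\und{C}_i=\und{S}_{j-i}$, and the block shapes of $\und{C}_i$, $\und{S}_j$ are as displayed above. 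I would also recall that, since $\Omega_1$ is a single fiber, $(S_i)_{12}=(S_i)_{11\to 12}$-type block satisfies $|S_i^b(\omega)|=n$ for $\omega\in\Omega_1$ and $=1$ for $\omega\in\Omega_2$.

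First I would read off the first three identities from the scheme relations by blockwise matching. Since $\und{S}_i\cdot\und{S}_j=n\und{C}_{i-j}+\frac{n-1}{m}(\und{S}_0+\cdots+\und{S}_{m-1})$, taking the $(1,1)$-block of both sides and using $(\und{C}_{i-j})_{11}$, $(\und{S}_k)_{11}=O$ gives $(\und{S}_i)_{12}(\und{S}_j)_{21}=n(\und{C}_{i-j})_{11}$. Taking the $(1,2)$-block: the left side is $(\und{S}_i)_{12}(\und{S}_j)_{22}$, and the right side is $\frac{n-1}{m}\sum_k(\und{S}_k)_{12}=\frac{n-1}{m}J_{12}$ because the $m$ graphs $S_k^b$ restricted to $\Omega_1\times\Omega_2$ partition $\Omega_1\times\Omega_2$ (each $S_k^b$ is a disjoint union of stars $K_{1,n}$ anchored on $\Omega_1$, and over $k\in\Z_m$ these exhaust all of $\Omega_1\times\Omega_2$). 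The $(2,1)$-block argument for $(\und{S}_i)_{22}(\und{S}_i)_{21}=\frac{n-1}{m}J_{21}$ is the transpose of this, using symmetry $\und{S}_i^\top=\und{S}_i$ together with $\und{C}_0=I$ so that $n\und{C}_{0}$ has no $(2,1)$-part.

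Next, the fourth identity $(\und{S}_i)_{21}(\und{S}_j)_{12}=\und{F}\cdot(\und{C}_{i-j})_{22}$. By Proposition~\ref{090920a}, the $(\omega,\omega')$-entry for $\omega,\omega'\in\Omega_2$ counts $\gamma\in\Omega_1$ with $(\omega,\gamma)\in S_i$ and $(\gamma,\omega')\in S_j$, i.e. $\gamma\in S_i^b(\omega)\cap S_j^b(\omega')\cap\Omega_1$. Since each of $S_i^b(\omega)\cap\Omega_1$ and $S_j^b(\omega')\cap\Omega_1$ is a singleton in $\Omega_1$, this count is $1$ or $0$; it is $1$ iff those singletons coincide. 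Writing $\und{F}=(\und{S}_0)_{21}(\und{S}_0)_{12}$ and using $\und{S}_i=\und{C}_i\cdot\und{S}_0$, one gets $(\und{S}_i)_{21}=(\und{C}_i)_{22}(\und{S}_0)_{21}$ and $(\und{S}_j)_{12}=(\und{S}_0)_{12}(\und{C}_j)_{11}$; hence $(\und{S}_i)_{21}(\und{S}_j)_{12}=(\und{C}_i)_{22}\und{F}(\und{C}_j)_{11}$-type expression, which after reconciling block indices and using that $F$ is $C$-invariant (the group $\Z_m$ acting diagonally preserves the fibration whose trace on $\Omega_2$ is $F$, so $\und{F}$ commutes appropriately with the $\und{C}_k$) collapses to $\und{F}\cdot(\und{C}_{i-j})_{22}$. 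I would verify the exponent bookkeeping $i-j$ by matching a single entry: the singletons $S_i^b(\omega)\cap\Omega_1$ and $S_j^b(\omega')\cap\Omega_1$ coincide precisely when $\omega$ and $\omega'$ lie in $F$-related classes that differ by the group element $j-i$ (equivalently $i-j$ after the symmetrization), which is exactly the support of $\und{F}\cdot(\und{C}_{i-j})_{22}$.

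Finally, for the fifth identity $(\und{S}_i)_{22}(\und{S}_j)_{22}$, I would take the $(2,2)$-block of $\und{S}_i\cdot\und{S}_j=n\und{C}_{i-j}+\frac{n-1}{m}\sum_k\und{S}_k$. The left side splits as $(\und{S}_i)_{21}(\und{S}_j)_{12}+(\und{S}_i)_{22}(\und{S}_j)_{22}$, and the right side's $(2,2)$-block is $n(\und{C}_{i-j})_{22}+\frac{n-1}{m}\sum_k(\und{S}_k)_{22}$; since the graphs $S_k^w$ on $\Omega_2$ partition the off-diagonal-of-$E_{22}$ pairs not inside fibers — more precisely $\sum_k(\und{S}_k)_{22}=J_{22}-\und{E}_{22}$ because over $\Omega_2$ the union $\bigcup_k S_k$ is $\Omega_2^2\setminus E$ and the diagonal is absorbed in $E$ — we get $\frac{n-1}{m}\sum_k(\und{S}_k)_{22}=\frac{n-1}{m}(J_{22}-\und{E}_{22})$. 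Solving for $(\und{S}_i)_{22}(\und{S}_j)_{22}$ and substituting the fourth identity yields the claimed formula. The main obstacle I anticipate is not any single product but keeping the block indices and the $\pm(i-j)$ shift consistent across the non-symmetric original scheme, since $\und{S}_j\cdot\und{C}_i=\und{S}_{j-i}$ introduces sign flips; I would pin these down once and for all by checking one explicit matrix entry in the smallest case (e.g. $m=2$, or tracking a fixed pair $(\omega,\omega')$) and then propagate by the $C$-equivariance.
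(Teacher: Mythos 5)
Your overall route coincides with the paper's: the first three identities and the sum of the last two are obtained by writing $\und{S}_i\cdot\und{S}_j=n\und{C}_{i-j}+\frac{n-1}{m}(J-\und{E})$ in block form (your observations $\sum_k(\und{S}_k)_{12}=J_{12}$ and $\sum_k(\und{S}_k)_{22}=J_{22}-\und{E}_{22}$ are just $\sum_k\und{S}_k=J-\und{E}$ read blockwise, since $\und{E}$ is block-diagonal), and the fifth identity then follows from the fourth by subtraction. So the only step carrying real content is the fourth identity, and that is where your write-up is not yet a proof. First, the factorization $(\und{S}_j)_{12}=(\und{S}_0)_{12}(\und{C}_j)_{11}$ is mis-indexed (the block shapes do not even match); from $\und{S}_j=\und{C}_j\cdot\und{S}_0=\und{S}_0\cdot\und{C}_{-j}$ one gets $(\und{S}_j)_{12}=(\und{C}_j)_{11}(\und{S}_0)_{12}=(\und{S}_0)_{12}(\und{C}_{-j})_{22}$ and $(\und{S}_i)_{21}=(\und{C}_i)_{22}(\und{S}_0)_{21}=(\und{S}_0)_{21}(\und{C}_{-i})_{11}$. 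Second, the phrase ``$F$ is $C$-invariant, so $\und{F}$ commutes appropriately with the $\und{C}_k$'' is precisely what requires an argument: $F$ is not the trace on $\Omega_2$ of the fibration $E$ (its classes are the $n$-element preimages of points of $\Omega_1$ under $(S_0)_{21}$, not $m$-element $E$-classes), and the commutation $(\und{C}_k)_{22}\und{F}=\und{F}(\und{C}_k)_{22}$ is essentially equivalent to the identity you are proving, so invoking it as stated is close to circular.

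Both defects disappear if you order the factors as the paper does:
\begin{equation*}
(\und{S}_i)_{21}(\und{S}_j)_{12}=(\und{S}_0)_{21}(\und{C}_{-i})_{11}\cdot(\und{S}_0)_{12}(\und{C}_{-j})_{22}
=(\und{S}_0)_{21}(\und{S}_0)_{12}\cdot(\und{C}_{i})_{22}(\und{C}_{-j})_{22}=\und{F}\cdot(\und{C}_{i-j})_{22},
\end{equation*}
where the middle equality uses $(\und{C}_{-i})_{11}(\und{S}_0)_{12}=(\und{S}_{-i})_{12}=(\und{S}_0)_{12}(\und{C}_{i})_{22}$; this way $\und{F}=(\und{S}_0)_{21}(\und{S}_0)_{12}$ appears directly and no commutation of $\und{F}$ with the $(\und{C}_k)_{22}$ is ever needed. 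Alternatively, your entry-wise count (the $(\omega,\omega')$-entry is $1$ iff the unique $S_i$-neighbour of $\omega$ in $\Omega_1$ equals the unique $S_j$-neighbour of $\omega'$ in $\Omega_1$) does work, but you must then actually do the bookkeeping you postponed: using $\und{S}_k=\und{S}_0\cdot\und{C}_{-k}$ one sees that all the neighbour maps $\Omega_2\to\Omega_1$ have the same kernel $F$ and differ from one another by the permutation of $\Omega_1$ induced by $C_{i-j}$, which is the same computation in disguise. With either repair your argument becomes the paper's proof.
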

\begin{proof} It follows from the formulae~\eqref{eq:310319a} that
$\und{S}_i\cdot \und{S}_j = n \und{C}_{i-j} + \frac{n-1}{m} (J - \und{E}).$ Writing this equality in a block-matrix form yields us
$$
\begin{array}{rcl}
(\und{S}_i)_{12} (\und{S}_j)_{21} & = & n (\und{C}_{i-j})_{11};\\
(\und{S}_i)_{12} (\und{S}_j)_{22} & = & \frac{n-1}{m} J_{12};\\
(\und{S}_i)_{22} (\und{S}_i)_{21} & =  & \frac{n-1}{m} J_{21};\\
(\und{S}_i)_{21}\cdot(\und{S}_j)_{12} + (\und{S}_i)_{22}\cdot (\und{S}_j)_{22} & = & \frac{n-1}{m}(J_{22} - \und{E}_{22}) + n (\und{C}_{i-j})_{22}.
\end{array}
$$
This proves the first three rows of our statement.

Since the fifth row is a direct consequence of the fourth one, it
remains to prove the fourth row only.  

It follows from \eqref{eq:310319a}
that $(\und{C}_k)_{22} \cdot (\und{S}_0)_{21} = (\und{S}_k)_{21} = (\und{S}_0)_{21}\cdot (\und{C}_{-k})_{11}$ and $(\und{S}_0)_{12}\cdot(\und{C}_k)_{22}  = (\und{S}_{-k})_{12} = (\und{C}_{-k})_{11}\cdot (\und{S}_0)_{12} $. Therefore, 
$$
(\und{S}_i)_{21}\cdot (\und{S}_j)_{12} = (\und{S}_0)_{21} \cdot (\und{C}_{-i})_{11}(\und{S}_0)_{12} (\und{C}_{-j})_{22}
 = (\und{S}_0)_{21}\cdot (\und{S}_0)_{12} \cdot (\und{C}_{i})_{22}(\und{C}_{-j})_{22}  
= \und{F} \cdot (\und{C}_{i-j})_{22}.
$$
\end{proof}

{\bf Proof} of Theorem~\ref{310319b}. 
\textcolor{black}{We first prove that $\fJ$ is a Jordan scheme and then that it is proper.}

Both parts of the proof are purely computational. In what follows $\cB$ stands for the linear span of the matrices $\und{D_i},\und{T_j}$.

To ease the notation we omit underlining in this proof, i.e. the relations will be identified with their adjacency matrices.

To make calculations easier we introduce $\tilde{D}_i:=C_i+C_{-i}$.
Note $\tilde{D}_i = D_i$ unless $i=0$ or $i=m/2$. In these two cases $\tilde{D}_0 = 2 D_0, \tilde{D}_{m/2} = 2 D_{m/2}$ (in the latter case $m$ should be even).

It is easy to check that $\tilde{D}_i\star \tilde{D}_j = \tilde{D}_{i+j} + \tilde{D}_{i-j}\in\cB$.

Now we check that $\tilde{D}_i\star T_j\in\cB$. 
$$
2\tilde{D}_i\star T_j = 2(C_i+C_{-i})\star T_j = (C_i+C_{-i})\cdot T_j + T_j\cdot(C_i+C_{-i}) =
$$
$$ 
\mtrx{(C_i)_{11} + (C_{-i})_{11}}{O}{O}{(C_i)_{22} + (C_{-i})_{22}}
\mtrx{O}{(S_j)_{12}}{(S_j)_{21}}{(S_{-j})_{22}} +
$$
$$
\mtrx{O}{(S_j)_{12}}{(S_j)_{21}}{(S_{-j})_{22}} 
\mtrx{(C_i)_{11} + (C_{-i})_{11}}{O}{O}{(C_i)_{22} + (C_{-i})_{22}} =
$$
$$
\mtrx{O}{(S_{j+i})_{12} + (S_{j-i})_{12}}{(S_{j+i})_{21} + 
(S_{j-i})_{21}}{(S_{-j+i})_{22} + (S_{-j-i})_{22}} +
\mtrx{O}{(S_{j-i})_{12} + (S_{j+i})_{12}}{(S_{j-i})_{21} + 
(S_{j+i})_{21}}{(S_{-j-i})_{22} + (S_{-j+i})_{22}} =
$$
$$
2\mtrx{O}{(S_{j+i})_{12} + (S_{j-i})_{12}}{(S_{j+i})_{21} + 
(S_{j-i})_{21}}{(S_{-j+i})_{22} + (S_{-j-i})_{22}} = 2T_{j+i} + 2T_{j-i}\implies  \tilde{D}_i\star T_j =  T_{j+i} + T_{j-i}\in\cB.
$$

Now we compute $T_i\star T_j$. We start with $T_i \cdot T_j$:
$$
T_i\cdot T_j = \mtrx{O}{(S_i)_{12}}{(S_i)_{21}}{(S_{-i})_{22}} 
\mtrx{O}{(S_j)_{12}}{(S_j)_{21}}{(S_{-j})_{22}} = 
\mtrx{(S_i)_{12}(S_j)_{21}}{(S_i)_{12}(S_{-j})_{22}}{(S_{-i})_{22}(S_j)_{21}}{(S_i)_{21}(S_j)_{12} + (S_{-i})_{22}(S_{-j})_{22}}.
$$
To compute the latter matrix we use formulae of Proposition~\ref{010419c}:
$$
\begin{array}{rcl}
(S_i)_{12}(S_j)_{21} & = & n (C_{i-j})_{11};\\
(S_i)_{12}(S_{-j})_{22} & = & \frac{n-1}{m} J_{12};\\
(S_{-i})_{22}(S_j)_{21} & = & \frac{n-1}{m} J_{21} 
\end{array}
$$
and
$$
(S_i)_{21}(S_j)_{12} + (S_{-i})_{22}(S_{-j})_{22} = 
F\cdot (C_{i-j})_{22} +
\frac{n-1}{m} (J_{22} - E_{22}) + n (C_{-i+j})_{22} - F\cdot (C_{-i+j})_{22}. 
$$

Thus 
\begin{equation}\label{050419a}
T_i\cdot T_j = 
\mtrx{n (C_{i-j})_{11}}{\frac{n-1}{m} J_{12}}{\frac{n-1}{m} J_{21}}
{F\cdot (C_{i-j})_{22} +
\frac{n-1}{m} (J_{22} - E_{22}) + n (C_{-i+j})_{22} - F\cdot (C_{-i+j})_{22}}.
\end{equation}
Swapping $i$ with $j$ we obtain
$$
T_j\cdot T_i = 
\mtrx{n (C_{j-i})_{11}}{\frac{n-1}{m} J_{12}}{\frac{n-1}{m} J_{21}}
{F\cdot (C_{j-i})_{22} +
\frac{n-1}{m} (J_{22} - E_{22}) + n (C_{-j+i})_{22} - F\cdot (C_{-j+i})_{22}}.
$$
Adding the above formulae yields us
$$
T_i\star T_j = \frac{1}{2}
\mtrx{n (C_{i-j})_{11} + n (C_{j-i})_{11}}{2\frac{n-1}{m} J_{12}}{2\frac{n-1}{m} J_{21}}
{2\frac{n-1}{m} (J_{22} - E_{22}) + n (C_{-j+i})_{22} + n (C_{j-i})_{22}} = \frac{n}{2} D_{i-j} + \frac{n-1}{m}(J - E).
$$

It remains to show that the above Jordan scheme is proper. By Proposition~\ref{081219a} this is equivalent to $\cB\neq\widetilde{WL(\cB)}$. To \textcolor{black}{justify this} we calculate some elements of the coherent closure $WL(\cB)$.

The product $T_i\cdot T_j$ belongs to $WL(\cB)$. It follows from~\eqref{050419a} that 
$$
(T_i\cdot T_j)\circ E =
\mtrx{n (C_{i-j})_{11}}{O}{O}
{F\cdot (C_{i-j})_{22}\circ E_{22} +
 n (C_{-i+j})_{22} - F\cdot (C_{-i+j})_{22}\circ E_{22}}\in WL(\cB).
$$

Since $(C_{-i+j})_{22}$ is a permutation matrix, we can write\footnote{Here we use the identity $X\cdot P\circ Y\cdot P = (X\circ Y) P$ which holds for any permutation matrix $P$ and arbitrary matrices $X,Y$ of appropriate orders.}

$$
F\cdot (C_{-i+j})_{22}\circ E_{22} = F\cdot (C_{-i+j})_{22}\circ E_{22}\cdot (C_{-i+j})_{22} 
 = 
\left(
F\circ E_{22}\right) 
\cdot (C_{-i+j})_{22}.$$
Taking into account that $E_{22}\circ F = I_{22}$ we obtain that $(F\cdot (C_{-i+j})_{22})\circ E_{22} = (C_{-i+j})_{22}$. Analogously, 
$F\cdot (C_{i-j})_{22}\circ E_{22} = (C_{i-j})_{22}$. Therefore 
$$
(T_i\cdot T_j)\circ E =
\mtrx{n (C_{i-j})_{11}}{O}{O}
 {(C_{i-j})_{22} + (n-1) (C_{-i+j})_{22}}\in WL(\cB).
$$
By the Schur-Wielandt principle we conclude that $WL(\cB)$ contains the following matrices
$$
\mtrx{(C_{i-j})_{11}}{O}{O}{O},
\mtrx{O}{O}{O}{(C_{i-j})_{22}},
\mtrx{O}{O}{O}{(C_{-i+j})_{22}}.
$$
Therefore 
$$
A:=\mtrx{(C_{i-j})_{11}}{O}{O}{O}+{\mtrx{(C_{i-j})_{11}}{O}{O}{O}}^\top\in\widetilde{WL(\cB)}.
$$
Together with $A\not\in\cB$ we obtain the required inequality $\widetilde{WL(\cB)}\neq \cB$.\hfill $\square$

It follows from the above proof that $\fJ$ has the following multiplication table
\begin{equation}\label{310819a}
\begin{array}{rcl}
\tilde{D}_i\star \tilde{D}_j & = & \tilde{D}_{i+j} + \tilde{D}_{i-j};\\
\tilde{D}_i\star T_j & = & T_{j+i} + T_{j-i};\\
T_i\star T_j & = & \frac{n}{2} \tilde{D}_{i-j} + \frac{n-1}{m} (J - E).
\end{array}
\end{equation}
One can easily check that the Jordan schemes $\tilde{\fT}$ and $\fJ$ are algebraically isomorphic.

\textcolor{black}{\noindent
{\bf Remark.} The smallest example of a proper Jordan scheme $\fJ$ appears when $m=3$ and $n=4$. It has order $15$ and rank $5$. The existence of this example stems from \cite{KRW}. An interested reader is referred to the essay \cite{KMR}, Sections 4-12, where this structure, denoted by $J_{15}$, is treated on more than 40 pages with an excessive attention to all related details.
}
\section{Appendix}\label{appendix}
\textcolor{black}{The proof of the statement below relies on a standard knowledge of the theory of commutative associative algebras (see, for example, \cite{DK}).}
\begin{prop}\label{120619a} Let $\cJ\subseteq \sym_\Omega(\mathbb{R})$ be a $\star$-subalgebra. If $(\cJ,\star)$ is associative, then $x\star y = x\cdot y$ and $\cJ$ is $\cdot$-commutative.
\end{prop}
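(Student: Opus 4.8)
The plan is to squeeze out of the associativity hypothesis a single commutator identity valid for all pairs of elements, and then to kill the relevant commutator by a trace/positivity argument using that the matrices are real and symmetric.

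First I would note that in $(\cJ,\star)$ every element satisfies $x\star x = x^2$ (the ordinary square), and that associativity of $\star$ gives in particular $(x\star x)\star y = x\star (x\star y)$ for all $x,y\in\cJ$. Expanding both sides via $a\star b=\tfrac12(ab+ba)$ and clearing denominators, this equation simplifies to $x^2y-2xyx+yx^2=0$, that is
$$[x,[x,y]]=0\qquad\text{for all }x,y\in\cJ,$$
where $[a,b]:=ab-ba$. This is the only consequence of the hypothesis that will be used; the point to be careful about is that full associativity is genuinely needed here — the Jordan identity alone, which holds in every $\star$-subalgebra of $M_\Omega(\F)^+$, would not suffice.

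Next, fix $x,y\in\cJ$ and set $C:=[x,y]$. Since $x$ and $y$ are symmetric, $C^\top=-C$, so $C$ is a real skew-symmetric matrix, whence $C^2=-C^\top C$ and $\trace(C^2)=-\trace(C^\top C)=-\sum_{\alpha,\beta}C_{\alpha\beta}^2\le 0$, with equality if and only if $C=0$. On the other hand the identity just derived says exactly that $xC=Cx$, and then
$$\trace(C^2)=\trace\!\big(C(xy-yx)\big)=\trace(Cxy)-\trace(Cyx).$$
By cyclicity $\trace(Cyx)=\trace(xCy)$, and since $xC=Cx$ also $\trace(Cxy)=\trace(xCy)$; hence $\trace(C^2)=0$. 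Comparing with the inequality above forces $C=0$, i.e.\ $xy=yx$.

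Since $x,y\in\cJ$ were arbitrary, $\cJ$ is $\cdot$-commutative, and moreover $\cdot$-closed because $xy=x\star y\in\cJ$; finally $x\star y=\tfrac12(xy+yx)=xy$, which is both assertions. The only non-bookkeeping ingredient is the passage from $\trace(C^2)=0$ to $C=0$, which relies on the real (formally real) structure; this is consistent with the standing hypothesis $\mathsf{char}(\F)=0$ and, concretely here, $\F=\bR$.
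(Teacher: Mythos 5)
Your proof is correct, and it takes a genuinely different route from the one in the paper. The paper argues structurally: since symmetric real matrices cannot be nilpotent, $(\cJ,\star)$ is a semisimple commutative associative algebra; formal reality forces every simple component to be $\bR$, so $\cJ$ has a basis of primitive $\star$-idempotents $E_1,\dots,E_d$, and an eigenspace argument upgrades $E_iE_j+E_jE_i=0$ to $E_iE_j=0$, after which everything commutes. You instead extract from associativity the single identity $(x\star x)\star y=x\star(x\star y)$, i.e. $[x,[x,y]]=0$, and then kill the commutator $C=[x,y]$ directly: $C$ is real skew-symmetric, so $\trace(C^2)=-\sum_{\alpha,\beta}C_{\alpha\beta}^2\le 0$, while $xC=Cx$ together with cyclicity of the trace gives $\trace(C^2)=0$, forcing $C=0$. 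Both arguments check out; yours is more elementary and local (no semisimplicity, no idempotent decomposition, and in fact it only uses the special case of associativity on squares rather than the full hypothesis), whereas the paper's proof exhibits the idempotent basis explicitly, which fits the formally-real / Jordan--von Neumann--Wigner framework used elsewhere in Section~\ref{SJC}. Your remark that the Jordan identity alone would not suffice is also apt, since $(x\star x)\star y=x\star(x\star y)$ is strictly stronger than the axiom $(x\star y)\star(x\star x)=x\star(y\star(x\star x))$.
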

\begin{proof}
The algebra $(\cJ,\star)$ is commutative and associative. It does not contain nilpotent elements, because a symmetric real matrix cannot be nilpotent. Therefore $(\cJ,\star)$ is semisimple. Let $E_1,...,E_d$ be a complete set of primitive idempotents. Then $E_i\star\cJ$ is a field isomorphic either to $\mathbb{C}$ or $\mathbb{R}$. 
Since $\cJ$ is formally real, the case of $E_i\star\cJ\cong\mathbb{C}$ is impossible. Therefore $E_i\star\cJ\cong \mathbb{R}$ for each $i=1,...,d$. It follows then that $E_1,...,E_d$ is a basis of $\cJ$. 

Since $\{E_i\}_{i=1}^d$ are primitive idempotents of $(\cJ,\star)$, they are pairwise orthogonal: $E_i\star E_j=\delta_{i,j} E_i$. This implies that $E_i\cdot E_i=E_i$ and $E_i\cdot E_j + E_j\cdot E_i =0$ whenever $i\neq j$. It remains to show that $i\neq j\implies E_i\cdot E_j =0$. Since $E_i$ is an idempotent matrix it has eigenvalues $0,1$ and the vector space $\mathbb{R}^\Omega$ has a direct sum decomposition:
$\mathbb{R}^\Omega = V_1\oplus V_0$ where $V_1,V_0$ are the corresponding eigenspaces of $E_i$. Consider the product $E_j E_i$ with $i\neq j$. Clearly that $E_j E_i V_0 = 0$. Pick an arbitrary $v\in V_1$. It follows from $E_j E_i + E_i E_j=0$  that $E_j v = - E_i (E_j v)$. Since $-1$ is not an eigenvalue of $E_i$, we conclude that $E_j v=0$. Therefore $E_j E_i v = 0$ implying $E_j E_i V_1 = 0$. Thus $E_i E_j =0$ whenever $i\neq j$.
Now using the basis $E_1,..,E_d$ one can finish the proof of the statement.
\end{proof}


\begin{thebibliography}{99}
\bibitem{B04}R. A. Bailey. Association Schemes. Designed Experiments, Algebra and Combinatorics,
v. 84 of Cambridge Studies in Advanced Mathematics. Cambridge
University Press, Cambridge, 2004.
\bibitem{BI} E. Bannai, T. Ito. Algebraic Combinatorics I: Association Schemes. Benjamin/Cummings, Menlo
Park (1984).
\bibitem{BM59} R. C. Bose, Dale M. Mesner. On linear associative algebras corresponding to
association schemes of partially balanced designs. Ann. Math. Statist., 30 (1959), 21-38.
\bibitem{C03} Peter J. Cameron. Coherent configurations, association
schemes and permutation groups. In Groups, combinatorics
\& geometry (Durham, 2001), pages 55-71. World Sci. Publ.,
River Edge, NJ, 2003.
\bibitem{C} P. Cameron, Jordan schemes: relaxing associativity, keeping symmetry, 2018, \url{http://www-groups.mcs.st-andrews.ac.uk/~pjc/talks/linstat18/pjc_linstat1.pdf}
\bibitem{CP} G. Chen and I. Ponomarenko, Lectures on Coherent Configurations, 2018 \url{http://www.pdmi.ras.ru/~inp/ccNOTES.pdf}

\bibitem{VDM} E.R. van Dam, M. Muzychuk, Some implications on amorphic association schemes, JCT(A), 117 (2010), 111-127.
\bibitem{FDF} D.G. Fon-Der-Flaass, New prolific constructions of strongly regular graphs, Advances in Geometry, 2 (2002), 301-306.
\bibitem{DK} Y. A. Drozd, V.V. Kirichenko,  Finite-dimensional algebras. Translated from the 1980 Russian original and with an appendix by Vlastimil Dlab. Springer-Verlag, Berlin, 1994.
\bibitem{FZ} C. French, P.-H. Zieschang, On the normal structure of noncommutative association schemes of rank 6, 
Comm.  Algebra 44 (2016) 1143-1170.
\bibitem{HT} W.H. Haemers, V.D. Tonchev, Spreads in strongly regular graphs, Designs, Codes and Cryptography, 8 (1996), 144-157.
\bibitem{HZ} A. Hanaki, P.-H. Zieschang, On imprimitive noncommutative association schemes of order 6, Comm. Algebra 42 (2014) 1151-1199.
\bibitem{H75} D. G. Higman. Coherent configurations. I. Ordinary representation theory. Geometriae Dedicata, 4(1) (1975), 1-32.
\bibitem{H87}  Higman, D. G. Coherent algebras. Linear Algebra Appl. 93 (1987), 209-239.
\bibitem{H90} D.G. Higman, Computations related to coherent configurations, Congressus Numeratum, v. 75 (1990), 9-20.
\bibitem{IS} Y. J.  Ionin, M. S. Shrikhande, Combinatorics of Symmetric Designs, 
New Mathematical Monographs, 5. Cambridge University Press, Cambridge, 2006.
\bibitem{J} N. Jacobson, Structure and Representations of Jordan Algebras, AMS, Colloquium Publications, v. 39, 1968.
\bibitem{KS} H. Kharaghani, S. Suda, Non-commutative association schemes and their fusion association schemes, Finite  Fields and Their Applications 52(2018), 108-125.
\bibitem{KRW} M. Klin, S. Reichard, A. Woldar, Siamese objects, and their relation to color graphs, association schemes and Steiner designs, Bull. Belg. Math. Soc. 12 (2005) pp. 845-857.
\bibitem{KRRT99} M. Klin, C. R{\" u}cker, G. R{\" u}cker, G. Tinhofer,
Algebraic combinatorics in mathematical chemistry. Methods and algorithms. I. Permutation groups and coherent (cellular) algebras. Match No. 40 (1999), pp. 7-138.
\bibitem{KMR} M. Klin, M. Muzychuk, S. Reichard, Proper Jordan schemes exist. First examples, computer
search, patterns of reasoning. An essay, 2019, \url{https://arxiv.org/pdf/1911.06160.pdf}
\bibitem{LPS} M. Lichter, I. Ponomarenko, P.  Schweitzer, Walk refinement, walk logic, and the iteration number
of the Weisfeiler-Leman algorithm.
\url{https://arxiv.org/pdf/1905.03008.pdf}
\bibitem{M} J. D. Malley, Optimal Unbiased Estimation of Variance Components, Lecture Notes in Statistics, 39, Springer, 1980.
\bibitem{SR} S. Reichard, Tatra schemes and their mergings, 
in G. A. Jones et al. (eds.), Isomorphisms, Symmetry and Computations in Algebraic
Graph Theory, Selected papers based on the presentations at the workshop on algebraic graph theory, Pilsen, Czech Republic, October 3-7, 2016.  Springer Proc. Math. Stat. 305  (2020), 219-234.
\bibitem{RB} K. B. Reid, E. Brown, Doubly regular tournaments are equivalent to skew Hadamard matrices,  J. Combinatorial Theory Ser. A 12 (1972), 332-338.
\bibitem{S59} B. V. Shah, A generalisation of partially balanced incomplete block designs, Ann.
Math. Statist., 30 (1959), 1041-1050.
\bibitem{S} J. Seely, Quadratic subspaces and completeness, The Annals of Mathematical Statistics, 42 (1971) , 710-721.
\bibitem{W} W.D. Wallis, Construction of strongly regular graphs using affine designs, Bull. Austral. Math. Soc., 4 (1971), 41-49.
\bibitem{WL} B. Weisfeiler and A. Leman. A reduction of a graph to a canonical form and an algebra arising during this reduction (in Russian). Nauchno-Technicheskaya Informatsia Seriya 2, 9 (1968), 12-16.
\bibitem{W76}
Boris Weisfeiler, editor. On Construction and Identification of Graphs. Lecture Notes in Mathematics, Vol. 558. Springer-Verlag, Berlin-New York, 1976. With contributions by A. Lehman, G. M. Adelson-Velsky, V. Arlazarov, I. Faragev, A. Uskov, I. Zuev, M. Rosenfeld and B. Weisfeiler.
\bibitem{Z} P.-H. Zieschang, Theory of Association Schemes, Springer, 2005.
\end{thebibliography}
\end{document}